\documentclass[11pt]{article}
 
\usepackage{amssymb}
\usepackage{amsmath}
\usepackage{amsthm}
\usepackage{geometry} 
\usepackage{hyperref}
\usepackage{parskip}
\usepackage{comment}
\usepackage{enumerate}
\usepackage[toc]{appendix}
\usepackage[scr=rsfso]{mathalfa}
\usepackage{mathtools}

\DeclarePairedDelimiter\floor{\lfloor}{\rfloor}
 
\usepackage{soul,xcolor}

\usepackage{stackengine}
\stackMath
\newcommand\tsup[2][2]{%
 \def\useanchorwidth{T}%
  \ifnum#1>1%
    \stackon[-.5pt]{\tsup[\numexpr#1-1\relax]{#2}}{\scriptscriptstyle\sim}%
  \else%
    \stackon[.5pt]{#2}{\scriptscriptstyle\sim}%
  \fi%
}

\usepackage{chemformula}
 
\usepackage{lineno}

\usepackage{enumerate}

\usepackage{bbm}

\RequirePackage{geometry}
\newtheorem{theorem}{Theorem}[section]
\newtheorem{proposition}[theorem]{Proposition}

\newtheorem{corollary}[theorem]{Corollary}
\newtheorem{definition}[theorem]{Definition}
\newtheorem{hypothesis}[theorem]{Hypothesis}
\theoremstyle{remark}
\newtheorem{remark}{Remark}

\newcommand{\augpr}[1]{\widehat{\mathbb{#1}}}

\usepackage[
    backend=biber,
    style=numeric,
    sorting=nyt,
    maxbibnames=10,
  ]{biblatex}
  \renewbibmacro{in:}{%
  \ifentrytype{article}
    {}
    {\bibstring{in}%
     \printunit{\intitlepunct}}}
\usepackage{authblk}

\allowdisplaybreaks
\begin{document}

\title{Signature McKean--Vlasov stochastic differential equations}

\author{Fred Espen Benth\thanks{Department of Data Science and Analytics, BI Norwegian Business School, N-0484 Oslo, Norway \& Department of Mathematics, University of Oslo, PO Box 1053 Blindern, N-0316 Oslo, Norway, \href{mailto:fred.e.benth@bi.no}{fred.e.benth@bi.no}} , Salvador Ortiz-Latorre\thanks{Department of Mathematics, University of Oslo, PO Box 1053 Blindern, N-0316 Oslo, Norway, \href{mailto:salvadoo@math.uio.no}{salvadoo@math.uio.no}} , Leonardo Tarquini\thanks{Department of Mathematics, University of Oslo, PO Box 1053 Blindern, N-0316 Oslo, Norway, \href{mailto:leonardt@math.uio.no}{leonardt@math.uio.no}}}

\maketitle
\begin{abstract} 
     McKean--Vlasov-type stochastic differential equations (SDEs) are characterized by coefficients depending on both the state and the law of the solution. In this work, we focus on a class of such equations where the coefficients depend on a linear combination of the expected signature of the geometric $p$-rough path lift of its solution, with $p\in(2,3)$. After establishing the strong existence and uniqueness of a solution, we prove how such an equation can approximate a general class of path-dependent McKean--Vlasov SDEs. Finally, we consider the associated particle system and propagation of chaos is established.
\end{abstract}

{\bf Keywords: }McKean--Vlasov stochastic differential equations, signature, universal approximation theorems, interacting particle systems, propagation of chaos

{\bf MSC: }60H10, 60J60, 60H35, 60L10, 60K35, 82C22, 60B10

\section{Introduction}\label{sec:introduction}
McKean--Vlasov-type stochastic differential equations (SDEs) have been extensively studied since the seminal works of McKean \cite{McKean1967Propagation} and Sznitman \cite{Sznitman} and find applications across a wide range of fields, including physics \cite{Cattiauxetal,GolseFrancois2016OtMF,Malrieu,arxivTarquiniUgolini1,2025_morale_tarquini_ugolini}, biology \cite{Delarueneurons,TomasevicMilica2021ANMS}, machine learning and deep learning \cite{Carmona2021CAoM,Carmona2022CAOM,MeiSong2018Amfv}, finance, optimal control, and mean-field games \cite{CarmonaDelarue1,CarmonaDelarue2,CarmonaFouqueSun}.

The importance of such equations have naturally stimulated the development of a wide range of numerical methods tailored to McKean--Vlasov dynamics. A first and most classical approach is based on interacting particle systems, where one approximates the law of the solution by the empirical measure of a large system of weakly interacting particles, typically simulated via time-discretization schemes of Euler--Maruyama type; see, for instance \cite{AntonelliFabio2002RoCo,BossyMireille1997ASPM,Meleard2006}.
More recently, several alternative strategies have been proposed. These include analytical expansions to approximate the density of the solution of the McKean--Vlasov SDE \cite{GobetEmmanuel2018Aaon}; approaches based on the Fokker–Planck equation representation of the solution of the McKean--Vlasov SDE, which make it possible to exploit numerical methods for partial differential equations \cite{BaladronJavier2012Mdap,GoddardBD2022Nbcm}; multilevel Picard iterations \cite{HutzenthalerMartin2022MPaf}; cubature methods \cite{ChaudrudeRaynalPE2015Acba}; machine-learning-based techniques \cite{AgarwalAnkush2026NaoM,HanJiequn2018Shpd}.

Our strategy is to introduce a suitable approximating equation based on a finite-dimensional parametrization in the space of probability measures on path space via path signatures. This approach also allows to approximate McKean--Vlasov SDEs with coefficients depending on the full history of the law (see, e.g, \cite{arxivTarquiniUgolini1,2025_morale_tarquini_ugolini,TomasevicMilica2021ANMS}) while preserving the key mean-field and path-dependent characteristics of the original dynamics. In contrast, most existing numerical methods focus on the Markovian case, in which coefficients depend only on the current state and law.

Inspired by the fact that linear combinations of path signatures provide universal approximations of path functionals \cite{CSTGlobalUAT}, and by their high computational efficiency \cite{chevyrev2025primersignaturemethodmachine,ReizensteinJeremyF2020A1TI}, we introduce and study a new class of McKean--Vlasov-type SDEs. The drift and diffusion coefficients of such equations depend on the expectation of a linear functional of the (time-extended) signature of the solution, and we refer to them as \emph{signature McKean--Vlasov SDEs}. Specifically, we consider a $d$-dimensional stochastic process
\begin{equation}\label{eq:sig_MKV_SDE_intro}
    X_t = \eta + \int_0^t b\left(s,X_s,\Big\langle \ell,\mathbb{E}\left[S(\augpr{X})_s\right]\Big\rangle\right)ds + \int_0^t\sigma\left(s,X_s,\Big\langle \widetilde{\ell},\mathbb{E}\left[S(\augpr{X})_s\right]\Big\rangle\right)dW_s,
\end{equation}
where $t\in[0,T]$ and $T>0$ is a finite time horizon, $\eta$ is an $\mathbb{R}^d$-valued random variable, and $W$ a Brownian motion.

In \cite{cuchiero2025signaturesdesaffinepolynomial}, the authors consider SDEs whose coefficients depend on functionals of their own signature and, by exploiting the affine and polynomial structure of the solution together with its signature, derive explicit power-series representations for the Fourier--Laplace transform and for expectations of entire functionals of the signature process. In contrast, in \eqref{eq:sig_MKV_SDE_intro} the (time-augmented) signature enters the coefficients only through its expectation, and we do not work within the framework of affine and polynomial process theory; instead, our main methodological contribution lies in the analytical and probabilistic tools developed to establish well-posedness for \eqref{eq:sig_MKV_SDE_intro}.

Moreover, the class of signature McKean–Vlasov SDEs can be regarded as a generalisation of polynomial McKean–Vlasov SDEs \cite{cuchiero2025polynomialmckeanvlasovsdes}, which are recovered as a special case in the one-dimensional setting. Although our assumptions differ from those in \cite{cuchiero2025polynomialmckeanvlasovsdes}, by employing rough path techniques we establish strong existence and pathwise uniqueness for \eqref{eq:sig_MKV_SDE_intro}, as well as propagation of chaos for the associated particle system (see \eqref{eq:PS_intro} below).

In \eqref{eq:sig_MKV_SDE_intro}, $S(\widehat{\mathbb{X}})$ denotes the signature of the time-augmented process $\widehat{X}_t:=(t,X_t)$ and it is defined for every $0\leq s<t\leq T$, as the formal series of iterated Stratonovich integrals
\begin{equation*}
    S(\augpr{X})_{s,t} := \left(1,\, \int_{s<u_1<t} \circ d\widehat{X}_{u_1},\, \int_{s<u_1<u_2<t} \circ d\widehat{X}_{u_1}\otimes \circ d\widehat{X}_{u_2},\, \ldots \right)\in \prod_{n=0}^{\infty} (\mathbb{R}^{d+1})^{\otimes n},
\end{equation*}
whose $n$-th component is
\begin{equation*}
    S^n(\augpr{X})_{s,t} := \int_{s<u_1<\cdots<u_n<t} \circ d\widehat{X}_{u_1}\otimes \cdots \otimes \circ d\widehat{X}_{u_n}\in (\mathbb{R}^{d+1})^{\otimes n}.
\end{equation*}
If $s=0$ then we just write $S(\augpr{X})_t$.

Then, $\langle\ell,S(\widehat{\mathbb{X}})_t\rangle$ is defined for every $t\in[0,T]$ as
\begin{equation}\label{eq:linear_comb_in_sig_MKV_SDE_intro}
    \langle\ell,S(\widehat{\mathbb{X}})_t\rangle := \sum_{0\leq |I|\leq N} \alpha_I\langle \varepsilon_I,S^{|I|}(\widehat{\mathbb{X}})_t\rangle_{(\mathbb{R}^{d+1})^{\otimes |I|}},
\end{equation}
where $N\in\mathbb{N}$, $\alpha_I\in\mathbb{R}$, $I=\{i_1,\dots,i_m\}\in\{1,\dots,d+1\}^m$ is a multi-index, $|I|=m$, $\varepsilon_I:=\varepsilon_{i_1}\otimes\dots\otimes\varepsilon_{i_m}$ are the basis elements of $(\mathbb{R}^{d+1})^{\otimes m}$\footnote{For a basis $\{e_1,\dots,e_{d+1}\}$ of $\mathbb{R}^{d+1}$ and a multi-index $I=\{i_1,\dots,i_m\}\in\{1,\dots,d+1\}^m$, $\{e_I\}_{|I|=m}$ forms a basis of $\mathbb{R}^{(d+1)\otimes m}$}, and $\langle \cdot,\cdot\rangle_{(\mathbb{R}^{d+1})^{\otimes |I|}}$ is the usual inner product on $(\mathbb{R}^{d+1})^{\otimes |I|}$. $\langle\widetilde{\ell},S(\widehat{\mathbb{X}})_t\rangle$ is defined mutatis mutandis.

The notion of the signature of a path was firstly introduced by Chen \cite{Chen57,Chen77} and plays a central role in Lyons' rough path theory \cite{Lyons98}. In this setting, for a controlled differential equation driven by a rough path, the signature provides an enhancement of the driving signal that makes the solution map continuous with respect to the enhanced path. Interested readers may also refer to the monographs \cite{FrizHairer2020,Friz_Victoir_2010,Lyons_Qian}.

From a more general perspective, the signature can be regarded as a feature map for paths. Universal approximation theorems (UATs) show that continuous functionals of continuous or  even c\`{a}dl\`{a}g paths can be approximated, on compact sets, by linear functionals of the (time-extended) signature \cite{CuchieroChrista2025Uatf}. By working in appropriately weighted path spaces, the compactness assumption can be removed \cite{CSTGlobalUAT}.

Using precisely this last result, we establish a global universal approximation result which states that for every $\varepsilon>0$, under Lipschitz conditions on $b$, $\sigma$, and $\mathcal G$, there exists a linear map $\langle\ell,\cdot\rangle$ on the tensor space  such that
\begin{equation*}
    \mathbb{E}\left[\sup_{0\leq t\leq T} \lVert X_t - Y_t \rVert_{\mathbb{R}^d}\right] < \varepsilon,
\end{equation*}
where $X$ is the solution to the signature McKean--Vlasov SDE \eqref{eq:sig_MKV_SDE_intro} with $\langle\ell,\cdot\rangle$ and $Y$ is the solution to the path-dependent McKean--Vlasov SDE
\begin{equation}\label{eq:path_dep_MKV_intro}
    Y_t = \eta + \int_0^t b\left(s,Y_s,\mathbb{E}\left[\mathcal{G}(Y_{\cdot\wedge s})\right]\right)ds + \int_0^t\sigma \left(s,Y_s,\mathbb{E}\left[\widetilde{\mathcal{G}}(Y_{\cdot\wedge s})\right]\right)dW_s,
\end{equation}
with $t\in[0,T]$, where $\mathcal{G},\widetilde{\mathcal{G}}:C\left([0,T];\mathbb{R}^d\right)\rightarrow\mathbb{R}$ and $Y_{\cdot\wedge s}=\{Y_{t\wedge s}\}_{t\in[0,T]}$ is defined as
\begin{equation*}
    Y_{t\wedge s}=\begin{cases}
        Y_t\quad \textup{if $t\leq s$;}\\
        Y_s\quad \textup{if $s< t\leq T$.}
    \end{cases}
\end{equation*}

The question of extending the approximation results developed in the present work beyond the class of equations \eqref{eq:path_dep_MKV_intro} is left for future investigation. Nevertheless, there are strong indications that the scope of the approach may be substantially broader. In particular, \cite{BelomestnySchoenmakers} shows that a subclass of the path-dependent McKean--Vlasov SDEs considered here can be used to approximate the significantly more general class of McKean--Vlasov diffusions
\begin{equation*}
    \begin{aligned}
        &Z_t = \eta + \int_0^t\int_{\mathbb{R}} b(s,Z_s,y)\Phi(\mu_{\cdot\wedge s})ds + \int_0^t\int_{\mathbb{R}} \sigma(s,Z_s,y)\widetilde{\Phi}(\mu_{\cdot\wedge s})(dy)dW_s;\\
        &\mu_t:=\mathcal{L}(Z_t),
    \end{aligned}
\end{equation*}
with $t\in[0,T]$, where $\mathcal{L}(Z_t)$ denotes the law of $Z_t$, and $\Phi,\widetilde{\Phi}$ are maps from the speace of probability measures over path space to the space of measures over path space.

While this result does not directly imply an extension of the analysis carried out in the present paper, it provides compelling evidence that the signature-based approximation framework may be applicable to a much wider class of McKean--Vlasov dynamics.

Finally, we consider a particle approximation of the signature McKean--Vlasov SDE, where the interacting particle system is used to approximate the expectation term driving the dynamics. We refer to it as \emph{signature mean-field particle system} and the dynamics of the $i$-th particle satisfy the following SDE:
\begin{equation}\label{eq:PS_intro}
   \begin{aligned}
        X_t^{i,N_P} \,=\, \eta \,+ &\int_0^t b\left(s,X_s^{i,N_P},\frac{1}{N_P}\sum_{j=1}^{N_P}\Big\langle \ell,S(\augpr{X}^{j,N_P})_s\Big\rangle\right)ds\\
        &+ \int_0^t \sigma\left(s,X_s^{i,N_P},\frac{1}{N_P}\sum_{j=1}^{N_P}\Big\langle \widetilde{\ell},S(\augpr{X}^{j,N_P})_s\Big\rangle\right)dW^i_s,
   \end{aligned}
\end{equation}
with $t\in[0,T]$, $i=1,\ldots,N_P$, and $N_P\in\mathbb{N}$, where $\{W^i\}_{i=1}^{N_P}$ is a family of $N_P$ independent $m$-dimensional Brownian motions and $\Big\langle \ell,S(\augpr{X}^{j,N_P})_s\Big\rangle$ is defined as in \eqref{eq:linear_comb_in_sig_MKV_SDE_intro} for every $j\in\{1,\ldots,N_P\}$.

By establishing the propagation of chaos property \cite{ChaintronDiez,Sznitman} for the above particle system, we prove that the expectation term appearing in the coefficients of the signature McKean--Vlasov SDE \eqref{eq:sig_MKV_SDE_intro} admits a particle approximation given by the empirical average over the signatures of the geometric $p$-rough lifts of the time-augmented interacting particle system.

Equation \eqref{eq:sig_MKV_SDE_intro} is defined on a filtered probability space $(\Omega,\mathcal{F},\mathbb{F}=\{\mathcal{F}_t\}_{t\ge 0},\mathbb{P})$, which supports an $m$-dimensional $\mathbb{F}$-Brownian motion $\{W_t\}_{0\le t\le T}$. We work under the following natural conditions on the initial condition $\eta$, the drift and diffusion coefficients $b$ and $\sigma$, and $\mathcal{G}$.
\begin{hypothesis}\label{hypothesis_A}
   \begin{enumerate}[i)]
        \item $\eta\in L^2(\Omega)$ is an $\mathcal{F}_0$-random variable, independent of $W$.
        \item The coefficients
        \begin{equation*}
            b:[0,T]\times\mathbb{R}^d\times\mathbb{R}\rightarrow\mathbb{R}^d;\quad \sigma:[0,T]\times\mathbb{R}^d\times\mathbb{R}\rightarrow\mathbb{R}^{d\times m}
        \end{equation*}
        satisfy boundedness and Lipschitz continuity assumptions. Precisely, for every $s,t\in[0,T]$, $x,x^{\prime}\in\mathbb{R}^d$, and $y,y^{\prime}\in \mathbb{R}$, there exist the constants $M_b,M_\sigma,L>0$ such that
        \begin{equation*}
            \big|b(t,x,y)\big|\leq M_b\,;\qquad \big|\sigma(t,x,y)\big|\leq M_{\sigma},
        \end{equation*}
        and
        \begin{equation*}
            \begin{aligned}
                &\lVert b(t,x,y)-b(s,x^{\prime},y^{\prime})\rVert_{\mathbb{R}^d} + \lVert\sigma(t,x,y)-\sigma(s,x^{\prime},y^{\prime})\rVert_F\\
                &\leq L\left(|t-s| + \lVert x-x^{\prime}\rVert_{\mathbb{R}^d}+ |y-y^{\prime}|\right),
            \end{aligned}
        \end{equation*}
        where $\lVert\cdot\rVert_F$ is the Frobenius norm.
    \end{enumerate}
\end{hypothesis}

\begin{hypothesis}\label{hypothesis_B}
    For every $z^1,z^2\in C\left([0,T];\mathbb{R}^d\right)$, there exists two constants $L_{\mathcal{G}},L_{\widetilde{\mathcal{G}}}>0$ such that
    \begin{equation*}
        \big| \mathcal{G}(z^1_{\cdot\wedge t})-\mathcal{G}(z^2_{\cdot\wedge t}) \big| \leq L_{\mathcal{G}}\big\lVert z_1-z_2\big\rVert_{\infty,t};\qquad \big| \widetilde{\mathcal{G}}(z^1_{\cdot\wedge t})-\widetilde{\mathcal{G}}(z^2_{\cdot\wedge t}) \big| \leq L_{\widetilde{\mathcal{G}}}\big\lVert z_1-z_2\big\rVert_{\infty,t}
    \end{equation*}
    for every $t\in[0,T]$, where $\lVert\cdot\rVert_{\infty,t}$ is the supremum norm over $[0,t]$.
\end{hypothesis}

The paper is organised as follows. In Section \ref{sec:preliminaries}, we collect the main notions and results regarding rough path theory that will be used throughout the paper; we will mainly follow \cite[Chapters 7--9,14]{Friz_Victoir_2010}. Section \ref{sec:well_posed_path_dep_MKV} contains the proof of strong existence of a pathwise unique solution to the path-dependent McKean--Vlasov equation \eqref{eq:path_dep_MKV_intro}. Then, in Section \ref{sec:well_posed_path_sig_MKV}, it is established the well-posedness for the signature McKean--Vlasov equation \eqref{eq:sig_MKV_SDE_intro}. Section \ref{sec:approximation_result} is dedicated to the aforementioned approximation result. In Section \ref{sec:PS_prop_of_chaos}, we introduce the particle system \eqref{eq:PS_intro} associated with the signature McKean--Vlasov SDE \eqref{eq:sig_MKV_SDE_intro}; after proving well-posedness, propagation of chaos is established. Finally, in Appendix \ref{appendix:proofs}, we collect the proofs of the more technical results.

\paragraph{Notation.} All stochastic processes throughout the paper are assumed to be defined over the filtered probability space $(\Omega, \mathcal{F},\mathbb{F}=\{\mathcal{F}_t\}_{t\in [0,T]},\mathbb{P})$ and adapted to $\mathbb{F}$. Moreover, for any stochastic process $X$, $\mathcal{L}(X)$ denotes its law.

For any Banach space $V$, we denote with $\mathcal{B}V$ the Borel $\sigma$-algebra on $V$.

For every $s,t\in[0,T]$, $s<t$, $D_{[s,t]}$ indicates a partition of $[s,t]$; precisely,
\begin{equation*}
    D_{[s,t]}:=\big\{s=t_0<t_1<\dots<t_i<\dots<t_n=t\big\}.
\end{equation*}

\section{Preliminaries}\label{sec:preliminaries}
Let $k\in\mathbb{N}$. Recall that the \emph{extended tensor algebra} over $\mathbb{R}^k$ is defined as
\begin{equation*}
    (T(\mathbb{R}^k)) := \prod_{n=0}^{\infty} (\mathbb{R}^k)^{\otimes n},
\end{equation*}
where $(\mathbb{R}^k)^{\otimes n}$ is the $n$-fold tensor product of $\mathbb{R}^k$ with the convention $(\mathbb{R}^k)^{\otimes 0}:=\mathbb{R}$.

Also, for $N\in\mathbb{N}$, we define the \emph{truncated tensor algebra}
\begin{equation*}
    T^N(\mathbb{R}^k) := \bigoplus_{n=0}^N (\mathbb{R}^k)^{\otimes n}.
\end{equation*}

\subsection{The free step-$N$ nilpotent Lie group}
The elements $\mathbf{t}\in T^N(\mathbb{R}^k)$ will be denoted by
\begin{equation*}
    \mathbf{t} = (\mathbf{t}^m)_{m=0}^N \in T^N(\mathbb{R}^k),
\end{equation*}
where $\mathbf{t}^m \in (\mathbb{R}^d)^{\otimes m}$ for every $m=0,\dots,N$.

The space $T^N(\mathbb{R}^k)$ becomes a linear space by defining for every $\mathbf{s},\mathbf{t}\in T^N(\mathbb{R}^k)$ and $\lambda\in\mathbb{R}$,
\begin{equation*}
    \mathbf{s} + \mathbf{t} := \left(\mathbf{s}^0 + \mathbf{t}^0,\mathbf{s}^1 + \mathbf{t}^1,\dots,\mathbf{s}^N + \mathbf{t}^N\right),\quad \lambda\mathbf{s} := \left(\lambda\mathbf{s}^0,\lambda\mathbf{s}^1,\dots,\lambda\mathbf{s}^N\right).
\end{equation*}
Also, it is possible to give the following definition of a product between two elements of $T^N(\mathbb{R}^k)$, called \emph{tensor convolution product}: for every $\mathbf{s}=\{\mathbf{s}^i\}_i,\mathbf{t}=\{\mathbf{t}^j\}_j\in T^N(\mathbb{R}^k)$, $\mathbf{z}:=\mathbf{s}\bullet\mathbf{t}\in T^N(\mathbb{R}^k)$ is defined as
\begin{equation*}
    \mathbf{z}^k := \sum_{i+j=\ell} \mathbf{s}^i\otimes\mathbf{t}^j \in (\mathbb{R}^k )^{\otimes \ell},\quad \ell=0,\dots,N,
\end{equation*}
where $\otimes$ denotes the tensor product. Hence, $\left(T^N(\mathbb{R}^k),+,\cdot,\bullet\right)$ is an associative algebra with unit element $(1,0,\dots,0)\in T^N(\mathbb{R}^k)$.

Finally, $T^N(\mathbb{R}^k)$ can be endowed with the norm
\begin{equation*}
    \lVert \mathbf{t}\rVert_{T^N(\mathbb{R}^k)} := \max_{\ell=0,\dots,N} \lVert\mathbf{t}^k\rVert_{(\mathbb{R}^k)^{\otimes \ell}}.
\end{equation*}

The subsets
\begin{equation*}
    \mathfrak{t}^N(\mathbb{R}^k) := \{\mathbf{t}\in T^N(\mathbb{R}^k) \,:\, \mathbf{t}^0 = 0 \}
\end{equation*}
and
\begin{equation*}
    1+\mathfrak{t}^N(\mathbb{R}^k) := \{\mathbf{t}\in T^N(\mathbb{R}^k) \,:\, \mathbf{t}^0 = 1 \}
\end{equation*}
of $T^N(\mathbb{R}^k)$ are of particular relevance. Precisely, $1+\mathfrak{t}^N(\mathbb{R}^k)$ is a Lie group under the tensor convolution product $\bullet$, with unit element $(1,0,\dots,0)\in 1+\mathfrak{t}^N(\mathbb{R}^k)$; on the other hand, $(\mathfrak{t}^N(\mathbb{R}^k),+,\cdot,[\cdot,\cdot])$ is a Lie algebra, where
\begin{equation*}
    [\mathbf{s},\mathbf{t}] := \mathbf{s}\bullet\mathbf{t} - \mathbf{t}\bullet\mathbf{s} \in\mathfrak{t}^N(\mathbb{R}^k)\qquad \forall\,\mathbf{s},\mathbf{t}\in\mathfrak{t}^N(\mathbb{R}^k).
\end{equation*}
We also define the \emph{free step-$N$nilpotent Lie algebra}
\begin{equation*}
    \mathfrak{g}^N(\mathbb{R}^k) := \{0\}\oplus\mathbb{R}^k \oplus [\mathbb{R}^k,\mathbb{R}^k] \oplus\cdots\oplus \underbrace{[\mathbb{R}^k,[\mathbb{R}^k,\dots,[\mathbb{R}^k,\mathbb{R}^k]]]}_{\textit{$(N-1)$ brackets}},
\end{equation*}
where
\begin{equation*}
    [\underline{\mathbf{g}},\mathbf{h}] := \underline{\mathbf{g}}\otimes\mathbf{h} - \mathbf{h}\otimes\underline{\mathbf{g}}
\end{equation*}
for every $\underline{\mathbf{g}}\in\mathbb{R}^k$ and $\mathbf{h}\in \mathfrak{t}^M(\mathbb{R}^k)$ with $1\leq M\leq N-1$.

Now, let us introduce the \emph{exponential map}
\begin{equation*}
    \begin{aligned}
        \exp : \mathfrak{t}^N(\mathbb{R}^k) &\rightarrow 1 + \mathfrak{t}^N(\mathbb{R}^k)\\
        \mathbf{t} &\mapsto 1 + \sum_{k=1}^N \frac{\mathbf{t}^{\bullet k}}{k!},
    \end{aligned}
\end{equation*}
whose inverse is the \emph{logarithm map}
\begin{equation*}
    \begin{aligned}
        \log : 1+\mathfrak{t}^N(\mathbb{R}^k) &\rightarrow \mathfrak{t}^N(\mathbb{R}^k)\\
        1+\mathbf{t} &\mapsto \sum_{k=1}^N (-1)^{k+1}\frac{\mathbf{t}^{\bullet k}}{k!},
    \end{aligned}
\end{equation*}
and the so-called \emph{free step-$N$ nilpotent Lie group}
\begin{equation*}
    G^N(\mathbb{R}^k) := \exp(\mathfrak{g}^N(\mathbb{R}^k)) \subset 1+\mathfrak{t}^N(\mathbb{R}^k).
\end{equation*}

$G^N(\mathbb{R}^k)$ can be endowed with the \emph{Carnot-Carath\'{e}odory norm} $\lVert\cdot\rVert_{cc}$, which is defined as follows: for every $\mathbf{g}\in G^N(\mathbb{R}^k)$,
\begin{equation*}
    \lVert\mathbf{g}\rVert_{cc} := \inf\Bigg\{\int_0^T \lVert d\gamma_t\rVert \,:\, \gamma\in C^{1-var}([0,T],\mathbb{R}^k)\quad\textit{and}\quad S(\gamma)_T^{\leq N}=\mathbf{g}\Bigg\},
\end{equation*}
where $C^{1-var}([0,T],\mathbb{R}^k)$ denotes the space of continuous paths of finite variation from $[0,T]$ to $\mathbb{R}^k$, with $0<T<+\infty$, and 
\begin{equation}\label{eq:signature_C_1_path}
    S(\gamma)_T^{\leq N} := \left(1,\int_{0<u_1<T}d\gamma_u,\dots,\int_{0<u_1<\dots,u_N<T}d\gamma_{u_1}\otimes\dots\otimes d\gamma_{u_N}\right)\in T^N(\mathbb{R}^k)
\end{equation}
is the \emph{step-N signature} of $\gamma$.

Furthermore, the Carnot-Carath\'{e}odory norm induces the metric $d_{cc}$ defined as
\begin{equation*}
    d_{cc}(\mathbf{g},\mathbf{h}) := \lVert \mathbf{g}^{-1}\bullet\mathbf{h} \rVert_{cc}\qquad \forall\,\mathbf{g},\mathbf{h}\in G^N(\mathbb{R}^k)
\end{equation*}
and $(G^N(\mathbb{R}^k),d_{cc})$ is a metric space.

\subsection{(Weakly) geometric rough paths and their signature}
To start, we introduce two norms on the space of paths from $[0,T]$ to $G^N(\mathbb{R}^k)$ and their induced distances.
\begin{definition}[Homogeneous $p$-variation norm]
    Let $p\geq 1$. For any path $\mathbf{x}:[0,T]\rightarrow G^{\lfloor p\rfloor}(\mathbb{R}^k)$,
    \begin{equation*}
        [0,T]\ni t\mapsto \mathbf{x}_t := \left(1,\mathbf{x}^{(1)}_t,\dots,\mathbf{x}^{(\lfloor p\rfloor)}_t\right) \in G^{\lfloor p\rfloor}(\mathbb{R}^k),
    \end{equation*}
    the homogeneous $p$-variation norm of $\mathbf{x}$ is defined as
    \begin{equation*}
        \lVert \mathbf{x}\rVert_{\textup{hom $p$-var};[0,T]} := \sup_{D} \left(\sum_{i=0}^{n-1} d_{cc}(\mathbf{x}_{t_i},\mathbf{x}_{t_{i+1}})^p\right)^{1/p},
    \end{equation*}
    where $D=D_{[0,T]}$ is a partition of the time interval $[0,T]$. Then, for any $\mathbf{x},\mathbf{y}:[0,T]\rightarrow G^{\lfloor p\rfloor}(\mathbb{R}^k)$, the homogenous $p$-variation distance between $\mathbf{x}$ and $\mathbf{y}$ is defined as
    \begin{equation*}
        d_{\textup{hom $p$-var};[0,T]}(\mathbf{x},\mathbf{y}) := \sup_{D} \left(\sum_{i=0}^{n-1} d_{cc}(\mathbf{x}_{t_i,t_{i+1}},\mathbf{y}_{t_i,t_{i+1}})^p\right)^{1/p},
    \end{equation*}
    where $\mathbf{x}_{t_i,t_{i+1}}:=\mathbf{x}_{t_i}^{-1}\bullet \mathbf{x}_{t_{i+1}}$.
\end{definition}

Recalling that $G^N(\mathbb{R}^k)\subset T^N(\mathbb{R}^k)$, $G^N(\mathbb{R}^k)$ can also be endowed with the following norm.
\begin{definition}[Inhomogeneous $p$-variation norm]\label{def:inhom_p_var_norm}
    Let $p\geq 1$. For any path $\mathbf{x}:[0,T]\rightarrow G^{\lfloor p\rfloor}(\mathbb{R}^k)$,
    the inhomogeneous $p$-variation norm of $\mathbf{x}$ is defined as
    \begin{equation*}
        \lVert \mathbf{x}\rVert_{\textup{inhom $p$-var};[0,T]} := \max_{1\leq m\leq \floor{p}}\sup_D \left(\sum_{i=0}^{n-1}\lVert\mathbf{x}^{(m)}_{t_i,t_{i+1}}\rVert^{p/m}_{(\mathbb{R}^k)^{\otimes m}}\right)^{m/p},
    \end{equation*}
    where $D=D_{[0,T]}$ is a partition of the time interval $[0,T]$ and $\mathbf{x}_{t_i,t_{i+1}}:=\mathbf{x}_{t_i}^{-1}\bullet \mathbf{x}_{t_{i+1}}$. Then, for any $\mathbf{x},\mathbf{y}:[0,T]\rightarrow G^{\lfloor p\rfloor}(\mathbb{R}^k)$, the inhomogeneous $p$-variation distance between $\mathbf{x}$ and $\mathbf{y}$ is defined as
    \begin{equation*}
        d_{\textup{inhom $p$-var};[0,T]}(\mathbf{x},\mathbf{y}) := \max_{1\leq m\leq \floor{p}}\sup_D\left(\sum_{i=0}^{n-1}\lVert\mathbf{x}^{(m)}_{t_i,t_{i+1}}-\mathbf{y}^{(m)}_{t_i,t_{i+1}}\rVert^{p/m}_{(\mathbb{R}^k)^{\otimes m}}\right)^{m/p}.
    \end{equation*}
\end{definition}
\begin{remark}
    Let us recall that a norm $\lVert\cdot\rVert$ is said to be homogeneous when
    \begin{equation*}
        \lVert \delta_{\lambda}\cdot\rVert =  |\lambda| \lVert \cdot\rVert\qquad \forall \lambda\in\mathbb{R},
    \end{equation*}
    where $\delta_\lambda$ is the dilation map (see, e.g., \cite[section 7.5.4]{Friz_Victoir_2010}). From their definition, it is clear that, while $\lVert \cdot\rVert_{\textup{hom $p$-var};[0,T]}$ is homogeneous, $\lVert \cdot\rVert_{\textup{inhom $p$-var};[0,T]}$ is not. However, they induce the same topology and the same notion of Cauchy sequences.
\end{remark}
\begin{remark}
    Let us notice that the inhomogeneous $p$-variation norm is a pseudo-metric, in fact $d_{\textup{inhom $p$-var};[0,T]}$ satisfies all the properties of a metric except that $d_{\textup{inhom $p$-var};[0,T]}(\mathbf{x},\mathbf{y})=0$ does not in general imply that $\mathbf{x}=\mathbf{y}$. Therefore, the topology induced by $d_{\textup{inhom $p$-var};[0,T]}$ is a non-Hausdorff topology.
\end{remark}

We can finally define the set of \emph{(weakly) geometric $p$-rough paths}.
\begin{definition}[Weakly geometric $p$-rough paths]\label{def:weakly_geom_p_rp}
    Let $p\geq 1$. We define $C^{\textup{$p$-var}}_o([0,T],G^{\lfloor p\rfloor}(\mathbb{R}^k))$ as the set of continuous paths $\mathbf{x}:[0,T]\rightarrow G^{\lfloor p\rfloor}(\mathbb{R}^k)$ such that $\mathbf{x}_0=(1,0,\dots,0)$ and
    \begin{equation*}
        \lVert \mathbf{x} \rVert_{\textup{hom $p$-var};[0,T]} < \infty \qquad (\textit{equiv.}\; \lVert \mathbf{x} \rVert_{\textup{inhom $p$-var};[0,T]} < \infty).
    \end{equation*}
\end{definition}
\begin{remark}\label{remark:rough_path_starting_point}
    By removing the condition $\mathbf{x}_0=(1,0,\dots,0)$, it is defined the set $C^{\textup{$p$-var}}([0,T],G^{\lfloor p\rfloor}(\mathbb{R}^k))$ of paths $\mathbf{x}:[0,T]\rightarrow G^{\lfloor p\rfloor}(\mathbb{R}^k)$ such that 
    \begin{equation*}
        \{t\mapsto\mathbf{x}_{0,t} := \mathbf{x}^{-1}\bullet\mathbf{x}_t\}_{t\in[0,T]} \in C^{\textup{$p$-var}}_o([0,T],G^{\lfloor p\rfloor}(\mathbb{R}^k)).
    \end{equation*}
    The same goes for the set of geometric $p$-rough paths defined below.
\end{remark}

\begin{definition}[Geometric $p$-rough paths]\label{def:geom_p_rp}
    Let $p\geq 1$. We define $C^{0,\textup{$p$-var}}_o([0,T],G^{\lfloor p\rfloor}(\mathbb{R}^k))$ as the set of continuous paths $\mathbf{x}:[0,T]\rightarrow G^{\lfloor p\rfloor}(\mathbb{R}^k)$ such that $\mathbf{x}_0=(1,0,\dots,0)$ and for which there exists a sequence of bounded variation, $\mathbb{R}^k$-valued paths $\{x_n\}_{n\geq 0}$ such that
    \begin{equation*}
        d_{\textup{hom $p$-var};[0,T]}\left(\mathbf{x},S^{\leq \lfloor p\rfloor}(x_n)_\cdot\right)\xrightarrow{n\rightarrow\infty} 0\qquad \Big(\textit{equiv.}\;d_{\textup{inhom $p$-var};[0,T]}\left(\mathbf{x},S^{\leq \lfloor p\rfloor}(x_n)_\cdot\right)\xrightarrow{n\rightarrow\infty} 0\Big),
    \end{equation*}
    where $S^{\leq \lfloor p\rfloor}_\cdot$ denotes the \emph{step-$\lfloor p\rfloor$ signature} for continuous paths of bounded variation \eqref{eq:signature_C_1_path} as a function of time.
\end{definition}

From interpolations results (see \cite[Chapter 8]{Friz_Victoir_2010}), it holds that for any $p\geq 1$,
\begin{equation*}
    C^{0,\textup{$p$-var}}_o([0,T],G^{\lfloor p\rfloor}(\mathbb{R}^k)) \subset C^{\textup{$p$-var}}_o([0,T],G^{\lfloor p\rfloor}(\mathbb{R}^k)).
\end{equation*}

Now, we introduce the step-$N$ signature and the full signature of a weakly geometric rough path.
\begin{definition}[$p$-Lyons lift]
    Let $N\geq\lfloor p\rfloor \geq 1$ and $\mathbf{x}\in C_o^{\textup{$p$-var}}([0,T],G^{\lfloor p\rfloor}(\mathbb{R}^k))$. A path in $C_o^{\textup{$p$-var}}([0,T],G^N(\mathbb{R}^k))$ that projects down onto $\mathbf{x}$ is said to be a $p$-Lyons lift of $\mathbf{x}$ of order $N$. 
\end{definition}
It is a classical result (see, e.g., \cite[Theorem 9.5]{Friz_Victoir_2010}) that there exists a unique Lyons lift of order $N$, $N\geq\lfloor p\rfloor \geq 1$, for any path $\mathbf{x}\in C_o^{\textup{$p$-var}}([0,T],G^{\lfloor p\rfloor}(\mathbb{R}^k))$. With some abuse of notation, we denote such a path with $S^{\leq N}_T(\mathbf{x})$ and we call it the step-$N$ signature of $\mathbf{x}$.

Finally, the following result guarantees the existence of the full signature of any weakly geometric rough path.
\begin{theorem}\label{thm:signature_rough_case}
    Let $p\geq 1$. There exists a map
    \begin{equation*}
        S(\cdot)_T:C_o^{\textup{$p$-var}}([0,T],G^{\lfloor p\rfloor}(\mathbb{R}^k))\rightarrow (T(\mathbb{R}^k)):=\prod_{\ell=0}^\infty (\mathbb{R}^k)^{\otimes \ell};\quad S(\mathbf{x})_T=\left(1,S(\mathbf{x})^{(1)}_T,S(\mathbf{x})^{(2)}_T,\dots\right)
    \end{equation*}
    such that $S(\mathbf{x})_T^{(m)}=\mathbf{x}^{(m)}_T$ for all $m=0,\dots,\floor{p}$. Moreover, $S(\mathbf{x})_T=S(\mathbf{y})_T$ if and only if $\mathbf{x}\sim_t\mathbf{y}$, where $\sim_t$ denotes the tree-like equivalence (see Definition \ref{def:tree_like_eq} below).
\end{theorem}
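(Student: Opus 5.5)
The plan has two parts: build the map $S(\cdot)_T$ through Lyons' extension theorem, then prove the characterisation of equal signatures through tree-like equivalence.

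\textbf{Existence.} Fix $\mathbf{x}\in C_o^{\textup{$p$-var}}([0,T],G^{\lfloor p\rfloor}(\mathbb{R}^k))$. For every $N\geq\lfloor p\rfloor$, the unique $p$-Lyons lift of order $N$ (\cite[Theorem 9.5]{Friz_Victoir_2010}) gives a path $S^{\leq N}(\mathbf{x})_\cdot$ with values in $G^N(\mathbb{R}^k)$. Uniqueness yields compatibility: the canonical projection $\pi_{N}:T^{N+1}(\mathbb{R}^k)\to T^N(\mathbb{R}^k)$ sends $S^{\leq N+1}(\mathbf{x})$ to a $p$-Lyons lift of order $N$ of $\mathbf{x}$. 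This projection is a group homomorphism that is $1$-Lipschitz for $d_{cc}$, so the projected path still has finite $p$-variation. Hence it equals $S^{\leq N}(\mathbf{x})$. Because the levels are consistent, define $S(\mathbf{x})_T^{(m)}$ as the $m$-th level of $S^{\leq N}(\mathbf{x})_T$ for any $N\geq m$, $N\geq\lfloor p\rfloor$. This gives an element of $(T(\mathbb{R}^k))$. For $m\leq\lfloor p\rfloor$ we may take $N=\lfloor p\rfloor$, and then the lift is $\mathbf{x}$ itself, so $S(\mathbf{x})^{(m)}_T=\mathbf{x}^{(m)}_T$.

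\textbf{Tree-like equivalence, easy direction.} Here $\mathbf{x}\sim_t\mathbf{y}$ means that $\mathbf{x}\sqcup\overleftarrow{\mathbf{y}}$ is tree-like, i.e.\ there is a height function $h\ge0$ with $h(0)=h(T)=0$ and $d_{cc}(\mathbf{z}_s,\mathbf{z}_t)\le h(s)+h(t)-2\inf_{[s,t]}h$. Chen's identity holds for the full signature, since it holds at every truncation level by uniqueness of the lift of a concatenation. It gives $S(\mathbf{x}\sqcup\overleftarrow{\mathbf{y}})_T=S(\mathbf{x})_T\otimes S(\mathbf{y})_T^{-1}$. Thus $S(\mathbf{x})_T=S(\mathbf{y})_T$ is equivalent to the concatenation having trivial signature. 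The statement therefore reduces to: a weakly geometric $p$-rough path has trivial signature if and only if it is tree-like. If the path is tree-like, then following Hambly--Lyons and Boedihardjo--Geng--Lyons--Yang, the height function lets us factor the path through an $\mathbb{R}$-tree. Approximating by bounded-variation tree-like paths, for which the signature is trivial by the classical cancellation argument, and using continuity of the Lyons lift in $p$-variation, we get trivial signature.

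\textbf{Converse (main obstacle).} The hard direction is that trivial signature implies tree-like. For bounded variation this is Hambly--Lyons; in the rough setting it is the theorem of Boedihardjo, Geng, Lyons and Yang (Adv. Math. 2016). Reproving it would require their extension of the Cartan-type development of paths into hyperbolic space, together with the construction of the height function from the rough path. I would not redo this. I would cite their result for weakly geometric rough paths with the same definition of $\sim_t$, after checking that Definition \ref{def:tree_like_eq} matches their notion, including the reduction of paths not starting at the identity via Remark \ref{remark:rough_path_starting_point}. The whole theorem then amounts to assembling Lyons' extension theorem, Chen's identity, and this uniqueness result.
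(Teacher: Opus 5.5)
Your proposal is correct and matches the paper, whose proof is purely by citation: Lyons' extension theorem \cite{Lyons98} for the existence of $S(\mathbf{x})_T$ with the prescribed first $\lfloor p\rfloor$ levels, and Boedihardjo--Geng--Lyons--Yang \cite{Boedihardjo2016} for the characterisation via tree-like equivalence, which is also where your argument rests. Your extra steps (consistency of the truncated Lyons lifts by uniqueness, and the Chen-identity reduction to triviality of the signature of $\mathbf{x}\sqcup\overleftarrow{\mathbf{y}}$) are sound. The only loose point is your sketch of the ``easy'' direction: for weakly geometric paths the approximations converge only in $p'$-variation for some $p'>p$, not in $p$-variation. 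That direction is in any case covered by the cited result.
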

\begin{proof}
    For the first part of the statement, see \cite{Lyons98}; for the second part, see \cite{Boedihardjo2016}.
\end{proof}

\begin{remark}\label{remark:invariant_time_par}
    It is possible to make the full signature map $S$ dependent onbb the parametrization of time by adding a univariate component with monotonically increasing (or decreasing) values that can be interpreted as auxiliary time (see, e.g., \cite{chevyrev2025primersignaturemethodmachine}). To this end, we define the subset    \begin{equation*}
        \begin{aligned}
            &\widehat{C}_o^{\textup{$p$-var}}([0,T],G^{\lfloor p\rfloor}(\mathbb{R}^{d+1})) :=\\
            &:= \Big\{\widehat{\mathbf{x}}\in C_o^{\textup{$p$-var}}([0,T],G^{\lfloor p\rfloor}(\mathbb{R}^{d+1})) \,:\, \widehat{\mathbf{x}}^{(1)}_t=\left(t,\mathbf{x}^{(1)}_t\right)\;\forall\,t\in[0,T],\,\mathbf{x}\in C_o^{\textup{$p$-var}}([0,T],G^{\lfloor p\rfloor}(\mathbb{R}^d))\Big\}.
        \end{aligned}
    \end{equation*}
    Moreover, the inhomogeneous $p$-variation norms induces on $\widehat{C}^{\textup{$p$-var}}_o([0,T],G^{\lfloor p\rfloor}(\mathbb{R}^{d+1}))$ a Hausdorff topology.
\end{remark}
\medskip

Also, from the local Lipschitz continuity of the signature map with respect to the inhomogenous path distances on the pathspace \cite[Theorem 9.10]{Friz_Victoir_2010}, the following corollary is immediate.
\begin{corollary}\label{corollary:local_Lip}
    Let $\mathbf{x},\mathbf{y}\in C^{0,\textup{$p$-var}}_o([0,T],G^{\lfloor p\rfloor}(\mathbb{R}^k))$. Then, for every $t\in[0,T]$,
    \begin{equation*}
        \sum_{m=0}^N \big\lVert S^m(\mathbf{x})_t - S^m(\mathbf{y})_t \big\rVert_{(\mathbb{R}^k)^{\otimes m}} \leq C(p,N,\Gamma)d_{\textup{inhom $p$-var};[0,t]}(\mathbf{x},\mathbf{y}),
    \end{equation*}
    where $\Gamma:=\max\big\{\lVert\mathbf{x}\rVert_{\textup{$p$-var};[0,T]},\lVert\mathbf{y}\rVert_{\textup{$p$-var};[0,T]}\big\}$.
\end{corollary}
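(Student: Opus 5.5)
The plan is to obtain Corollary \ref{corollary:local_Lip} from the local Lipschitz continuity of the Lyons lift, \cite[Theorem 9.10]{Friz_Victoir_2010}, applied on the interval $[0,t]$ rather than $[0,T]$.

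\textbf{Step 1: restriction to $[0,t]$.} Fix $t\in[0,T]$. The restrictions of $\mathbf{x}$ and $\mathbf{y}$ to $[0,t]$ are again geometric $p$-rough paths started at the unit element. Restricting an approximating bounded-variation sequence keeps the convergence, since a supremum over partitions of $[0,t]$ is bounded by the corresponding supremum over $[0,T]$. For the same reason, $p$-variation norms are monotone under restriction, so
\[
\lVert\mathbf{x}\rVert_{\textup{$p$-var};[0,t]}\le\lVert\mathbf{x}\rVert_{\textup{$p$-var};[0,T]}\le\Gamma,
\]
and likewise for $\mathbf{y}$. By uniqueness of the Lyons lift (\cite[Theorem 9.5]{Friz_Victoir_2010}), the step-$N$ lift of the restricted path is the restriction of $S^{\le N}(\mathbf{x})$. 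Hence its value at the terminal time is $S^{\le N}(\mathbf{x})_t$, and the same holds for $\mathbf{y}$.

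\textbf{Step 2: apply the lift estimate.} \cite[Theorem 9.10]{Friz_Victoir_2010} states that on sets where both inhomogeneous $p$-variation norms are bounded by $\Gamma$, and for every $m\le N$,
\[
\rho_{m\text{-var}}\bigl(S^{\le N}(\mathbf{x}),S^{\le N}(\mathbf{y})\bigr)\le C(p,N,\Gamma)\,\rho_{p\text{-var}}(\mathbf{x},\mathbf{y}).
\]
Here $\rho$ is the level-wise inhomogeneous distance, with level $m$ measured in $p/m$-variation. Taking the one-interval partition $\{0,t\}$ and using $\mathbf{x}_0=\mathbf{y}_0=1$ gives
\[
\lVert S^m(\mathbf{x})_t-S^m(\mathbf{y})_t\rVert\le\bigl(\text{level-}m\text{ part of }\rho\text{ on }[0,t]\bigr).
\]
That level-$m$ part is bounded by $C\,d_{\textup{inhom $p$-var};[0,t]}(\mathbf{x},\mathbf{y})$.

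\textbf{Step 3: sum over levels.} The $m=0$ term vanishes. Summing the bounds for $m=1,\dots,N$ gives the claim with the constant $N\cdot C(p,N,\Gamma)$. This constant is independent of $t$ because the constant in Step 2 depends only on $p$, $N$ and the bound $\Gamma$, which is uniform in $t$ by Step 1.

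\textbf{Main obstacle.} The delicate point is matching conventions for how the distance enters the estimate. In Friz--Victoir the bound is stated as $\rho_{p\text{-var}}(\mathbf{x},\mathbf{y})$ multiplied by a constant depending on the norms. I expect the version with the inhomogeneous distance of the $\lfloor p\rfloor$-level paths on the right-hand side to be exactly what is needed. If instead the theorem controls only $\sup_m$ of the level-wise quantities, that still bounds each level individually, so the sum over $m$ only costs the factor $N$ absorbed in Step 3.
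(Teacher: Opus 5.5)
Your proposal is correct and follows the same route as the paper. The paper's proof simply cites Proposition 8.7 and Theorem 9.10 of \cite{Friz_Victoir_2010}, the local Lipschitz continuity of the Lyons lift in inhomogeneous $p$-variation; your restriction to $[0,t]$, monotonicity of the norms, and one-interval-partition bound spell out the details that citation leaves implicit.
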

\begin{proof}
    The above result follows from Proposition 8.7 and Theorem 9.10 in \cite{Friz_Victoir_2010}.
\end{proof}

Before concluding this subsection, we introduce one last norm.
\begin{definition}[Homogeneous $\alpha$-H\"{o}lder norm]\label{def:hom_alpha_Hold_norm}
    Let $\alpha\in(0,1]$. For any path $\mathbf{x}:[0,T]\rightarrow G^{\lfloor p\rfloor}(\mathbb{R}^k)$, the homogeneous $\alpha$-H\"{o}lder norm of $\mathbf{x}$ is defined as
    \begin{equation*}
        \lVert \mathbf{x}\rVert_{\textup{hom $\alpha$-H\"{o}l};[0,T]} := \sup_{0\leq s<t\leq T}\frac{d_{cc}(\mathbf{x}_s,\mathbf{x}_t)}{|t-s|^\alpha}.
    \end{equation*}
    Then, for any $\mathbf{x},\mathbf{y}:[0,T]\rightarrow G^{\lfloor p\rfloor}(\mathbb{R}^k)$, the homogenous $\alpha$-H\"{o}lder distance between $\mathbf{x}$ and $\mathbf{y}$ is defined as
    \begin{equation*}
        d_{\textup{hom $\alpha$-H\"{o}l};[0,T]}(\mathbf{x},\mathbf{y}) := \sup_{0\leq s<t\leq T}\frac{d_{cc}(\mathbf{x}_{s,t},\mathbf{y}_{s,t})}{|t-s|^\alpha},
    \end{equation*}
    where $\mathbf{x}_{s,t}:=\mathbf{x}_s^{-1}\bullet \mathbf{x}_t$.
\end{definition}

Similarly to Definition \ref{def:inhom_p_var_norm}, it is possible to define the inhomogeneous $\alpha$-H\"{o}lder norm. Also, following Definition \ref{def:weakly_geom_p_rp} and Definition \ref{def:geom_p_rp}, from Definition \ref{def:hom_alpha_Hold_norm} it is possible to define the set of $\alpha$-H\"{o}lder (weakly) geometric rough paths. However, since this will not play a role in the following, we omit the definition and discussion and interested readers can refer to \cite[Chapter 8]{Friz_Victoir_2010}.

\subsection{The signature map as a universal feature map}
In this subsection, we discuss an application developed in \cite[Section 5.3]{CSTGlobalUAT}, where the weighted Stone–Weierstrass theorem is employed to prove a global universal approximation theorem for functionals of weakly geometric rough paths using linear functions of their signature.

To start, we define the space of weakly geometric $p$-rough paths which are also $\alpha$-H\"{o}lder continuous.
\begin{definition}
    Let $(p,\alpha)\in(1,\infty)\times(0,1)$ with $p\alpha<1$. We define $C^{\alpha,p}_o([0,T],G^{\lfloor 1/\alpha\rfloor}(\mathbb{R}^k))$ as the set of continuous paths $\mathbf{x}:[0,T]\rightarrow G^{\lfloor 1/\alpha\rfloor}(\mathbb{R}^k)$ such that $\mathbf{x}_0=(1,0,\dots,0)$ and
    \begin{equation*}
        \begin{aligned}
            &\lVert \mathbf{x} \rVert_{\textup{hom $p$-var};[0,T]} + \lVert \mathbf{x} \rVert_{\textup{hom $\alpha$-H\"{o}l};[0,T]} < \infty\\(\textit{equiv.}\; &\lVert \mathbf{x} \rVert_{\textup{inhom $p$-var}} + \lVert \mathbf{x} \rVert_{\textup{inhom $\alpha$-H\"{o}l};[0,T]} < \infty).
        \end{aligned}
    \end{equation*}
    Moreover, we define
    \begin{equation*}
        \begin{aligned}
            &\widehat{C}^{\alpha,p}_o=\widehat{C}^{\alpha,p}_o([0,T],G^{\lfloor 1/\alpha\rfloor}(\mathbb{R}^{k+1})) :=\\
            &:= \Big\{\widehat{\mathbf{x}}\in C_o^{\alpha,p}([0,T],G^{\lfloor 1/\alpha\rfloor}(\mathbb{R}^{k+1})) \,:\, \widehat{\mathbf{x}}^{(1)}=\left(t,\mathbf{x}^{(1)}\right)\;\forall\,t\in[0,T],\,\mathbf{x}\in C^{\alpha,p}_o([0,T],G^{\lfloor 1/\alpha\rfloor}(\mathbb{R}^k))\Big\}.
        \end{aligned}
    \end{equation*}
\end{definition}

Then, the following result holds.
\begin{theorem}\label{thm:CuchieroTeichmannGUAT}
    For any $\widehat{\mathbf{x}}\in\widehat{C}^{\alpha,p}_o$ and some $\beta>0$ and $\gamma\geq\lfloor 1/\alpha\rfloor$, let
    \begin{equation*}
        \psi(\widehat{\mathbf{x}}) := \exp\left(\beta(\lVert \widehat{\mathbf{x}} \rVert_{\textup{hom $p$-var};[0,T]} + \lVert \widehat{\mathbf{x}} \rVert_{\textup{hom $\alpha$-H\"{o}l};[0,T]})^\gamma\right).
    \end{equation*}
    Also, let
    \begin{equation*}
        B_\psi(\widehat{C}^{\alpha,p}_o) := \Bigg\{f:\widehat{C}^{\alpha,p}_o\rightarrow\mathbb{R} \,:\, \lVert f\rVert
        _{\mathcal{B}_\psi(\widehat{C}^{\alpha,p}_o)}:=\sup_{\widehat{\mathbf{x}}\in\widehat{C}^{\alpha,p}_o}\frac{f(\widehat{\mathbf{x}})}{\psi(\widehat{\mathbf{x}})}<\infty\Bigg\}
    \end{equation*}
    and $\mathcal{B}_\psi(\widehat{C}^{\alpha,p}_o)$ be the closure with respect to $\lVert \cdot\rVert_{\mathcal{B}_\psi(\widehat{C}^{\alpha,p}_o)}$ of the space $C_b^0\left(\widehat{C}^{\alpha,p}_o;\mathbb{R}\right)$ of bounded and continuous real-valued functions over $\widehat{C}^{\alpha,p}_o$. Then, for every map $f\in \mathcal{B}_\psi(\widehat{C}^{\alpha,p}_o)$ and for every $\varepsilon>0$, there exists a linear function of the form 
    \begin{equation*}
        (T(\mathbb{R}^{k+1}))\ni S(\widehat{\mathbf{x}})_T \longmapsto \langle\ell,S(\widehat{\mathbf{x}})_T\rangle := \sum_{0\leq |I|\leq N} \alpha_I\big\langle \varepsilon_I, S^{|I|}(\widehat{\mathbf{x}})_T \big\rangle_{(\mathbb{R}^{k+1})^{\otimes |I|}},
    \end{equation*}
    where $N\in\mathbb{N}$, $\alpha_I\in\mathbb{R}$, $I=(i_1,\dots,i_m)\in\{1,\dots,k\}^m$ is a multi-index with $|I|=m$, $\varepsilon_I:=\varepsilon_{i_1}\otimes\dots\otimes\varepsilon_{i_m}$ are the basis elements of $(\mathbb{R}^{k+1})^{\otimes m}$, and $\langle \cdot,\cdot\rangle_{(\mathbb{R}^{k+1})^{\otimes |I|}}$ is the usual inner product on $(\mathbb{R}^{k+1})^{\otimes |I|}$, such that
    \begin{equation*}
        \sup_{t\in[0,T]}\sup_{\widehat{\mathbf{x}}\in \widehat{C}^{\alpha,p}_o} \frac{ \big|f(\widehat{\mathbf{x}}_{\cdot\wedge t}) -\langle\ell,S(\widehat{\mathbf{x}})_t\rangle\big| }{ \psi(\widehat{\mathbf{x}}_{\cdot\wedge t}) } < \varepsilon.
    \end{equation*}
\end{theorem}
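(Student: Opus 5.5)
This result is the global universal approximation theorem of \cite[Section 5.3]{CSTGlobalUAT}, specialised to time-augmented rough paths. I would prove it by applying the weighted Stone--Weierstrass theorem to a suitable subalgebra, and then transferring the approximation from the terminal time to all stopped paths. The proof has four steps.

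\textbf{Step 1: the approximating family is a separating algebra.} Let $\mathcal{A}$ be the set of functions $\widehat{\mathbf{x}}\mapsto\langle\ell,S(\widehat{\mathbf{x}})_T\rangle$ as $\ell$ ranges over finite linear combinations of the $\varepsilon_I$.
\begin{enumerate}[i)]
    \item $\mathcal{A}$ is a point-separating subalgebra containing constants. Constants come from the empty word. Closure under products is the shuffle identity $\langle\varepsilon_I,S\rangle\langle\varepsilon_J,S\rangle=\langle\varepsilon_I\sqcup\!\sqcup\,\varepsilon_J,S\rangle$, which holds for weakly geometric rough paths because it holds for bounded-variation paths and passes to the Lyons extension by continuity.
    \item Points are separated by Theorem \ref{thm:signature_rough_case}. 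Two time-augmented paths with the same signature are tree-like equivalent. Since the first coordinate $t$ is strictly increasing, a tree-like reduction cannot cancel any piece, so the two paths coincide. This is the Hausdorff property recorded in Remark \ref{remark:invariant_time_par}.
\end{enumerate}

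\textbf{Step 2: the weighted Stone--Weierstrass theorem.} Its hypotheses are as follows.
\begin{enumerate}[i)]
    \item The sublevel sets $\{\psi\le R\}$ must be compact in the topology used. I would equip $\widehat{C}^{\alpha,p}_o$ with the $p'$-variation topology for some $p'$ slightly above $p$, together with an $\alpha'$-H\"older topology with $\alpha'<\alpha$ and $p'\alpha'<1$ still. Balls in the stronger norms are then relatively compact in the weaker topology by Arzel\`a--Ascoli plus interpolation, and they are closed by lower semicontinuity of the norms.
    \item Every element of $\mathcal{A}$ must satisfy $|a|/\psi\to0$ at infinity on these sublevel sets, i.e.\ $a\in\mathcal{B}_\psi$. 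The extension estimate gives $|S^m(\widehat{\mathbf{x}})_T|\le C_m\|\widehat{\mathbf{x}}\|_{p\text{-var}}^m$. Hence every element of $\mathcal{A}$ grows polynomially, which is dominated by $\exp(\beta\,\cdot^\gamma)$ for $\gamma\ge\lfloor 1/\alpha\rfloor\ge 1$.
    \item Continuity of elements of $\mathcal{A}$ follows from Corollary \ref{corollary:local_Lip}.
\end{enumerate}
The theorem then gives density of $\mathcal{A}$ in $\mathcal{B}_\psi$, i.e.\ $\sup_{\widehat{\mathbf{x}}}|f(\widehat{\mathbf{x}})-\langle\ell,S(\widehat{\mathbf{x}})_T\rangle|/\psi(\widehat{\mathbf{x}})<\varepsilon$.

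\textbf{Step 3: from time $T$ to all $t\in[0,T]$.} For a time-augmented path, $S(\widehat{\mathbf{x}}_{\cdot\wedge t})_T$ is not equal to $S(\widehat{\mathbf{x}})_t$, because the stopped path freezes the time coordinate too. I would handle this as follows.
\begin{enumerate}[i)]
    \item Replace the stopped path by the path that follows $\widehat{\mathbf{x}}$ up to $t$ and then continues only in the time direction up to $T$, and view $f$ as a function of this modified path.
    \item By Chen's identity, the signature of the modified path is $S(\widehat{\mathbf{x}})_t\bullet\exp((T-t)e_0)$. A linear functional of this is again a linear functional of $S(\widehat{\mathbf{x}})_t$, with coefficients that are polynomial in $(T-t)$.
    \item Alternatively, add the time parameter $t$ as an extra input and approximate uniformly on the compact set $[0,T]$.
    \item Uniformity in $t$ holds because the map $\widehat{\mathbf{x}}\mapsto\widehat{\mathbf{x}}_{\cdot\wedge t}$ sends $\widehat{C}^{\alpha,p}_o$ into itself. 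It does not increase either norm, and $\psi(\widehat{\mathbf{x}}_{\cdot\wedge t})$ is exactly the weight at the stopped path, so the global bound from Step 2 applies to it directly.
\end{enumerate}

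\textbf{Main obstacle.} I expect two steps to be hard.
\begin{enumerate}[i)]
    \item The more delicate one is the compactness of the sublevel sets of $\psi$ in an admissible topology (Step 2, item i). This is what forces the $p$-variation norm to be paired with a H\"older norm and the condition $p\alpha<1$. I would try first to obtain it by interpolation between H\"older and $p$-variation estimates, following \cite[Chapter 8]{Friz_Victoir_2010}.
    \item The second is matching the stopped-path formulation with $S(\widehat{\mathbf{x}})_t$ (Step 3). For this I would rely on the Chen-identity reduction described there.
\end{enumerate}
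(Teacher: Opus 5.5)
The paper does not prove this statement; it cites \cite[Corollary 5.7]{CSTGlobalUAT} and \cite[Theorem 3.13]{CuchieroChrista2025Uatf}. Those results do rest on a weighted Stone--Weierstrass theorem, so your overall strategy is the right one. The gap is in Step 2. The properties you check (compact sublevel sets of $\psi$, $\mathcal{A}\subseteq\mathcal{B}_\psi$, continuity, point separation, constants) do not imply density in a weighted space. For example, on $X=\mathbb{R}$ with $\psi(x)=\exp(|x|^{1/2})$ the polynomials have all of these properties. Yet they are not dense in $\mathcal{B}_\psi(\mathbb{R})$, by Bernstein's approximation problem: density forces $\int_{\mathbb{R}}\log\psi(x)\,(1+x^2)^{-1}\,dx=\infty$. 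The weighted Stone--Weierstrass theorem of Nachbin type also needs an exponential-moment condition. For instance, $\mathcal{A}$ should contain the algebra generated by a point-separating family $\mathcal{G}$ with $\exp(\eta_g|g|)\in\mathcal{B}_\psi$ for every $g\in\mathcal{G}$ and some $\eta_g>0$. This is what forces a finite signed measure $\mu$ with $\int\psi\,d|\mu|<\infty$ annihilating $\mathcal{A}$ to vanish, via moment determinacy of its images under finitely many generators. A symptom of the gap is that your argument never really uses $\gamma\geq\lfloor 1/\alpha\rfloor$: polynomial growth is dominated by $\exp(\beta r^\gamma)$ for every $\gamma>0$. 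Nor can the condition be checked on all of $\mathcal{A}$, since a level-$m$ coordinate grows like $\lVert\widehat{\mathbf{x}}\rVert^m$ with $m$ unbounded.

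What is missing is a family of generators of growth degree at most $\lfloor 1/\alpha\rfloor$ that still separates points; this is where the hypothesis on $\gamma$ enters. With $e_0$ the time direction, take $\mathcal{G}=\{\langle v\otimes e_0^{\otimes r},S(\widehat{\mathbf{x}})_T\rangle:\ |v|\leq\lfloor 1/\alpha\rfloor,\ r\geq 0\}$. For the canonical time extension (cross-terms with $t$ given by Young integrals), $\langle v\otimes e_0^{\otimes r},S(\widehat{\mathbf{x}})_T\rangle=\int_0^T\frac{(T-s)^{r-1}}{(r-1)!}\langle v,\widehat{\mathbf{x}}_{0,s}\rangle\,ds$ for $r\geq1$. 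By density of polynomials these values determine the path $s\mapsto\widehat{\mathbf{x}}_{0,s}$, so $\mathcal{G}$ separates points without tree-like equivalence. Moreover $|g|\leq C_g(1+\lVert\widehat{\mathbf{x}}\rVert_{\textup{hom $p$-var};[0,T]})^{\lfloor 1/\alpha\rfloor}$. Hence $\exp(\eta|g|)\in\mathcal{B}_\psi$ for $\eta<\beta/C_g$, precisely because $\gamma\geq\lfloor1/\alpha\rfloor$. The algebra generated by $\mathcal{G}$ lies in $\mathcal{A}$ by the shuffle identity.

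Step 3 needs two corrections. First, stopping the whole time-augmented path makes it constant after $t$, so $S(\widehat{\mathbf{x}}_{\cdot\wedge t})_T=S(\widehat{\mathbf{x}})_t$ exactly. The real obstruction is that this path is not in $\widehat{C}^{\alpha,p}_o$, which your time-continued path repairs. Second, Chen's identity then gives coefficients depending on $t$, whereas the statement requires a single $\ell$. Remove this dependence by expanding $(T-t)^r$ and using $t^j\langle w,S(\widehat{\mathbf{x}})_t\rangle=j!\,\langle e_0^{\otimes j}\sqcup\!\sqcup\, w,S(\widehat{\mathbf{x}})_t\rangle$. Item (iii) is too vague to count as an argument. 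The norm claim in (iv) is unnecessary, since the Step 2 bound at the modified path already carries its own weight.
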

\begin{proof}
    See \cite[Corollary 5.7]{CSTGlobalUAT} and \cite[Theorem 3.13]{CuchieroChrista2025Uatf}.
\end{proof}

\subsection{Geometric $p$-rough path lifts of semimartingales}
In this subsection, we specialise the above results to the case of semimartingales.
\medskip

\begin{theorem}\label{thm:geometric_p_rough_path_of_semimartingale}
    Let $p\in(2,3)$ and $X:[0,T]\rightarrow\mathbb{R}^d$ be a semimartingale defined on the probability space $(\Omega,\mathcal{F},\mathbb{F},\mathbb{P})$. Let us define
    \begin{equation*}
        \mathbb{X}_{0,t} = \mathbb{X}_t := \left(1,\int_0^t\circ dX_s,\int_0^t\circ dX^{\otimes 2}\right)=:\left(1,\mathbb{X}^1_t,\mathbb{X}^2_t\right),
    \end{equation*}
   for every $t\in[0,T]$. Then, for almost every $\omega\in \Omega$, $\mathbb{X}$ is a geometric $p$-rough path. Also, $\mathbb{X}$ is called the geometric $p$-rough path lift of $X$.
\end{theorem}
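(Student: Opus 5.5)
We reduce the statement to the Brownian case, where the result is classical (Friz--Victoir, Chapter 13), and then pass to general continuous semimartingales. Throughout, $X$ is taken to be a continuous semimartingale, which is implicit in the statement since $\mathbb{X}$ must be a continuous path; write $X=X_0+M+A$ with $M$ a continuous local martingale and $A$ a continuous adapted process of finite variation.

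\textbf{Step 1: algebraic properties.} For a.e. $\omega$ we show that $t\mapsto\mathbb{X}_t$ takes values in $G^2(\mathbb{R}^d)$ and is multiplicative. By the Stratonovich (hence first-order) chain rule, $\mathrm{Sym}(\mathbb{X}^2_{s,t})=\tfrac12\mathbb{X}^1_{s,t}\otimes\mathbb{X}^1_{s,t}$. This identity characterises $G^2(\mathbb{R}^d)=\exp(\mathfrak g^2(\mathbb{R}^d))$. Chen's relation $\mathbb{X}_{s,u}=\mathbb{X}_{s,t}\bullet\mathbb{X}_{t,u}$ follows from additivity of stochastic integrals, after choosing a version of the iterated integral that is jointly continuous in $(s,t)$. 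Continuity of $t\mapsto\mathbb{X}_t$ holds because stochastic integrals admit continuous versions.

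\textbf{Step 2: finite $p$-variation for $p\in(2,3)$.} We first localise: stop at $\tau_n=\inf\{t:|X_t|+\langle M\rangle_t+|A|_{1\text{-var};[0,t]}\ge n\}$. Since $\tau_n\uparrow\infty$ a.s., it suffices to treat bounded $\langle M\rangle_T$ and bounded total variation of $A$. For the martingale part, apply the Burkholder--Davis--Gundy inequality to $M$ and to the iterated integral $\int M\otimes\circ dM$, which gives
\[
\mathbb{E}\,\|\mathbb{M}_{s,t}\|_{cc}^{q}\lesssim \mathbb{E}\bigl[(\langle M\rangle_t-\langle M\rangle_s)^{q/2}\bigr].
\]
Changing time via $\langle M\rangle$ (Dambis--Dubins--Schwarz), this becomes a Brownian-type moment bound. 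A Kolmogorov/Garsia--Rodemich--Rumsey criterion for $G^2$-valued paths then yields finite $1/\alpha$-H\"older regularity in the time $\langle M\rangle_t$ for every $\alpha<1/2$, and hence finite $p$-variation for every $p>2$, since $p$-variation is invariant under reparametrisation. The cross integrals $\int A\otimes dM$, $\int M\otimes dA$ and $\int A\otimes dA$ are controlled by Young integration, because $A$ has finite $1$-variation and $1/p+1>1$. Together these give $\|\mathbb{X}\|_{\textup{hom $p$-var};[0,T]}<\infty$ a.s. Equivalently, one can cite Friz--Victoir, Theorem 14.9 (the Stratonovich lift of continuous semimartingales) together with the translation by a bounded-variation path (their Theorem 9.34).

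\textbf{Step 3: geometricity.} A weakly geometric $p$-rough path is only required to have finite $p$-variation, but geometric rough paths must be limits of lifts of bounded-variation paths. We use the interpolation fact from \cite[Chapter 8]{Friz_Victoir_2010}: for $2<p'<p<3$,
\[
C^{p'\text{-var}}_o\subset C^{0,p\text{-var}}_o .
\]
This is proved by geodesic approximation, and it removes the need for an explicit approximating sequence. Applying Step 2 with some $p'\in(2,p)$ places $\mathbb{X}$ in $C^{0,p\text{-var}}_o$. Alternatively, one can show directly that the piecewise-linear (dyadic) interpolations $X^n$ satisfy $d_{\textup{inhom $p$-var}}(S^{\le2}(X^n),\mathbb{X})\to0$ in probability. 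Since a.s. convergence holds along a subsequence, this is enough.

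The main obstacle is the uniform $p$-variation estimate in Step 2 for general continuous local martingales. It requires BDG bounds on the second level together with a reparametrisation argument. To avoid redoing the Kolmogorov argument, we rely on the corresponding result in \cite[Ch.~14]{Friz_Victoir_2010}.
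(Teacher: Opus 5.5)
Your proposal is correct and follows the same route as the paper, whose proof is only the reference to Friz--Victoir, Chapter 14. Your sketch is the standard argument behind that reference, to which you also ultimately defer: the $G^2$/Chen algebra from the Stratonovich chain rule, BDG for the enhanced martingale with a time change and Kolmogorov, Young translation by the finite-variation part, and the interpolation $C^{p'\textup{-var}}_o\subset C^{0,p\textup{-var}}_o$ for $2<p'<p<3$ to get geometricity. There are two cosmetic slips: the Kolmogorov step yields $\alpha$-H\"older (not ``$1/\alpha$-H\"older'') regularity for $\alpha<1/2$, and for $d>1$ the time change should be by $\mathrm{tr}\langle M\rangle$ rather than a literal Dambis--Dubins--Schwarz reduction to Brownian motion, which still gives the moment bound you need.
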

\begin{proof}
    See Chapter 14 in \cite{Friz_Victoir_2010}.
\end{proof}

\begin{proposition}\label{prop:signature_of_semimartingale}
    Let $p\in (2,3)$ and $X:[0,T]\rightarrow\mathbb{R}^d$ be a semimartingale defined on the probability space $(\Omega,\mathcal{F},\mathbb{F},\mathbb{P})$ with geometric $p$-rough path lift
    \begin{equation*}
        \mathbb{X}_t = \left(1,\mathbb{X}^1_t,\mathbb{X}^2_t\right) = \left(1,\int_0^t\circ dX,\int_0^t\circ dX^{\otimes 2}\right)\quad \forall t\in[0,T].
    \end{equation*}
    Then,
    \begin{equation}\label{eq:definition_signature_for_rough_paths}
        S(\mathbb{X})_T=\left(1,\int_0^T\circ dX,\int_0^T\circ dX^{\otimes 2},\int_0^T\circ dX^{\otimes 3},\dots\right) \in (T(\mathbb{R}^d)),
    \end{equation}
    where all the stochastic integrals are taken in the Stratonovich sense.
\end{proposition}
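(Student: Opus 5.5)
The claim is that the full signature of the geometric $p$-rough path lift $\mathbb{X}$ coincides with the sequence of iterated Stratonovich integrals of $X$. The plan is to use the uniqueness of the Lyons lift (\cite[Theorem 9.5]{Friz_Victoir_2010}) together with Theorem \ref{thm:signature_rough_case}. Define the candidate path
\begin{equation*}
    \mathbf{Z}^{N}_t := \left(1,\int_0^t\circ dX,\int_0^t\circ dX^{\otimes 2},\dots,\int_0^t\circ dX^{\otimes N}\right),\qquad t\in[0,T],
\end{equation*}
for each $N\geq 3$, the iterated integrals being defined recursively, e.g. $\int_0^t \circ dX^{\otimes n} := \int_0^t \big(\int_0^s\circ dX^{\otimes (n-1)}\big)\otimes\circ dX_s$. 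It then suffices to show that, almost surely, $\mathbf{Z}^N$ is a $p$-Lyons lift of $\mathbb{X}$ of order $N$. Uniqueness then gives $S^{\leq N}(\mathbb{X})_T=\mathbf{Z}^N_T$ for every $N$, and letting $N\to\infty$ (the levels are consistent) yields \eqref{eq:definition_signature_for_rough_paths}.

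Three properties must be checked a.s.: (i) $\mathbf{Z}^N$ projects onto $\mathbb{X}$, which holds by construction on levels $1,2$; (ii) $\mathbf{Z}^N$ takes values in $G^N(\mathbb{R}^d)$ and is multiplicative, $\mathbf{Z}^N_{s,t}=(\mathbf{Z}^N_s)^{-1}\bullet\mathbf{Z}^N_t$; (iii) $\mathbf{Z}^N$ has finite $p$-variation. For (ii), I would approximate $X$ by piecewise linear interpolations $X^{D}$ along partitions $D$ with mesh going to zero. For these bounded variation paths, Chen's identity and the group-valuedness of the step-$N$ signature are classical. By the Wong--Zakai theorem for semimartingales, the iterated Riemann--Stieltjes integrals of $X^D$ converge in probability (and a.s. along a subsequence) to the corresponding Stratonovich integrals at each fixed $s,t$. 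Since $G^N$ is closed and Chen's identity is an algebraic relation preserved under limits, (ii) follows for rational times and then for all times by continuity.

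Step (iii) is the main obstacle. My first approach would be to rely on \cite[Chapter 14]{Friz_Victoir_2010}: for continuous semimartingales, $S^{\leq 2}(X^D)$ converges to $\mathbb{X}$ in $d_{\textup{$p$-var}}$ in probability, which is the content behind Theorem \ref{thm:geometric_p_rough_path_of_semimartingale}. By the continuity of the Lyons lift in $p$-variation (\cite[Theorem 9.10]{Friz_Victoir_2010}, used in Corollary \ref{corollary:local_Lip}), $S^{\leq N}(X^D)$ then converges in $p$-variation to the Lyons lift $S^{\leq N}(\mathbb{X})$. On the other hand, (ii) shows that $S^{\leq N}(X^D)_t\to\mathbf{Z}^N_t$ pointwise in probability. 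Identifying the two limits along a common a.s.-convergent subsequence gives $\mathbf{Z}^N=S^{\leq N}(\mathbb{X})$ directly. This bypasses a separate $p$-variation estimate and in fact makes the uniqueness argument immediate. The only care needed is the null sets: there are countably many $N$ and rational $t$, and everything is extended by path continuity.
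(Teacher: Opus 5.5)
Your argument is correct, but it follows a different route from the paper's. The paper's proof is a one-line appeal to Theorem \ref{thm:signature_rough_case} applied to the lift of Theorem \ref{thm:geometric_p_rough_path_of_semimartingale}. Once the second level is fixed as the Stratonovich integral, the paper reads the higher levels of the Lyons extension as the ``canonically defined'' iterated Stratonovich integrals. Their agreement with iterated integrals constructed by stochastic calculus is therefore treated as definitional and never argued. You instead define $\int_0^t\circ dX^{\otimes n}$ independently by Stratonovich calculus and then prove the identification in three steps: $p$-variation convergence in probability of the piecewise-linear lifts $S^{\le 2}(X^D)\to\mathbb{X}$ (\cite[Chapter 14]{Friz_Victoir_2010}); continuity of the Lyons lift to get $S^{\le N}(X^D)\to S^{\le N}(\mathbb{X})$, uniformly and hence pointwise; and Wong--Zakai to identify the pointwise limit with $\mathbf{Z}^N_t$. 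This gives the proposition real content. As you note yourself, it also makes the separate checks (ii)--(iii) superfluous, since matching the two limits already yields $\mathbf{Z}^N=S^{\le N}(\mathbb{X})$ up to indistinguishability. The price is that all the weight sits on Wong--Zakai at \emph{every} level $N$, not just the familiar level-$2$ (L\'evy area) statement. If you want to avoid a black-box citation, it follows by induction on $n$ from the discrete Chen recursion $S^{D,n}_{t_{k+1}}=\sum_{j=0}^{n}S^{D,n-j}_{t_k}\otimes(\Delta_k X)^{\otimes j}/j!$, with $\Delta_kX=X_{t_{k+1}}-X_{t_k}$. After summing over $k$, the $j=1$ terms are adapted Riemann sums converging to $\int Z^{n-1}\otimes dX$. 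The $j=2$ terms produce the Stratonovich correction $\frac12\int Z^{n-2}\otimes d[X,X]$. The $j\ge 3$ terms vanish because $\sum_k\lVert\Delta_k X\rVert^3\to0$ in probability. Finally, like the paper, you need $X$ to be continuous, since Chapter 14 of \cite{Friz_Victoir_2010} only treats continuous semimartingales.
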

\begin{proof}
    The result follows directly from Theorems \ref{thm:signature_rough_case} and \ref{thm:geometric_p_rough_path_of_semimartingale}. Indeed, combining these two results shows that, for any semimartingale $X$, once the second-level iterated integral $\int \circ dX^{\otimes 2}$ is defined in the Stratonovich sense, all higher-order iterated integrals $\int \circ dX^{\otimes m}$ are canonically defined for every $m \geq \lfloor p \rfloor $.
\end{proof}
\begin{remark}
    Given a semimartingale $X$, let $\augpr{X}$ denote the \emph{time-augmented geometric p-rough path lift} of $X$, which is the canonical $p$-rough path lift of the process $t\mapsto \widehat{X}_t:=(t,X_t)$. The inclusion of the time component ensures that the signature map $S$ is dependent to the time parametrization. For every $0\leq s<t\leq T$, $\augpr{X}_{s,t}$ is defined as
    \begin{equation}\label{eq:time_augmente_sig}
        \begin{aligned}
            &\augpr{X}_{s,t}:=\left(1,\, \int_{s<u_1<t} \circ d\widehat{X}_{u_1},\, \int_{s<u_1<u_2<t} \circ d\widehat{X}_{u_1}\otimes \circ d\widehat{X}_{u_2}\right)\\
            &=\left(1,
            \begin{bmatrix}
                t-s\\
                \int_{s<u_1<t} \circ dX_{u_1}
            \end{bmatrix},
            \begin{bmatrix}
                \frac{1}{2}(t^2-s^2) &\int_s^t (u-s)\circ dX_u^\top\\
                \int_s^t (X_u-X_s)du &\int_s^t (X_u-X_s)\otimes \circ dX_u^\top
            \end{bmatrix}\right).
        \end{aligned}
    \end{equation}
    Moreover, the inhomogeneous $p$-variation norms are Hausdorff on the space of the $p$-rough lifts of time-augmented semimartingales (see, e.g., \cite{chevyrev2025primersignaturemethodmachine}).
\end{remark}

\section{The path-dependent McKean--Vlasov SDE}\label{sec:well_posed_path_dep_MKV}
In this section, we establish the well-posedness of the path-dependent McKean--Vlasov SDE
\begin{equation}\label{eq:path_dep_MKV}
    \begin{aligned}
        Y_t = \eta + \int_0^t b\left(s,Y_s,\mathbb{E}\left[\mathcal{G}(Y_{\cdot\wedge s})\right]\right)dt + \int_0^t \sigma\left(s,Y_s,\mathbb{E}\left[\mathcal{G}(Y_{\cdot\wedge s})\right]\right)dW_s,
    \end{aligned}
\end{equation}
with $t\in[0,T]$. To start, we recall some basic notions and results regarding the Wasserstein distance. For this, we follow Chapter 6 of \cite{Villani}.

Let $\left(\mathcal{X},\rho\right)$ be a separable metric space and consider two probability measures on $\mathcal{X}$, namely $\mu,\nu\in\mathcal{P}(\mathcal{X})$. The space of the so-called couplings of $\mu$ and $\nu$ is defined as
\begin{equation*}
    \Pi(\mu,\nu):=\big\{\pi\in\mathcal{P}\left(\mathcal{X}\times\mathcal{X}\right) \,:\, \pi(\cdot\times\mathcal{X})=\mu(\cdot),\quad \pi(\mathcal{X}\times\cdot)=\nu(\cdot)\big\}.
\end{equation*}
Moreover, for fixed $p\geq 1$, we define
\begin{equation*}
    \mathcal{P}^p(\mathcal{X}) := \Bigg\{ \mu\in\mathcal{P}(\mathcal{X}) \,:\, \int_{\mathcal{X}}\rho(x,x_0)^p\mu(dx) < \infty \Bigg\}.
\end{equation*}
\begin{remark}
    The definition of $\mathcal{P}^p(\mathcal{X})$ does not depend on the choice of $x_0\in\mathcal{X}$.
\end{remark}

The \emph{$p$-Wasserstein metric} is defined as follows.
\begin{definition}\label{def:Wasserstein_norm}
    Let $\left(\mathcal{X},\rho\right)$ be a separable metric space and $p\geq 1$. Given two probability measures $\mu,\nu\in\mathcal{P}^p(\mathcal{X})$, $\mathcal{W}^p(\mu,\nu)$ defined as
    \begin{equation*}
        \mathcal{W}^p(\mu,\nu) := \inf_{\pi\in\Pi(\mu,\nu)} \left[\int_{\mathcal{X}\times\mathcal{X}}\rho(x,y)^p\pi(dx,dy)\right]^{1/p}
    \end{equation*}
    is called the $p$-Wasserstein metric.
\end{definition}

We have the following result.
\begin{theorem}[Villani \cite{Villani}, Theorem 6.18]\label{thm:Wasserstein_Polish}
    If  $\left(\mathcal{X},\rho\right)$ is a Polish space (complete and separable), then
    \begin{enumerate}[i)]
        \item $\mathcal{W}^p$ is a metric.
        \item $\left(\mathcal{P}^p(\mathcal{X}),\mathcal{W}^p\right)$ is also a Polish space
    \end{enumerate}
\end{theorem}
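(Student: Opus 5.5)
The plan is to prove the two claims of Theorem \ref{thm:Wasserstein_Polish} in order. First, $\mathcal{W}^p$ is a finite metric on $\mathcal{P}^p(\mathcal{X})$. Second, the space $\left(\mathcal{P}^p(\mathcal{X}),\mathcal{W}^p\right)$ is separable and complete. Throughout we fix $x_0\in\mathcal{X}$.

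\textbf{Finiteness and metric axioms.} Finiteness comes from the product coupling $\mu\otimes\nu$ together with the elementary bound $\rho(x,y)^p\le 2^{p-1}\big(\rho(x,x_0)^p+\rho(y,x_0)^p\big)$. Symmetry is immediate: swapping coordinates maps $\Pi(\mu,\nu)$ onto $\Pi(\nu,\mu)$. For the remaining axioms:
\begin{enumerate}[i)]
\item Optimal couplings exist. Since $\mathcal{X}$ is Polish, $\mu$ and $\nu$ are tight, so $\Pi(\mu,\nu)$ is tight and weakly closed. By Prokhorov it is weakly compact. The cost $\rho^p$ is continuous and nonnegative, so $\pi\mapsto\int\rho^p\,d\pi$ is lower semicontinuous (Portmanteau applied to the truncations $\rho^p\wedge n$). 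Hence the infimum is attained.
\item Definiteness. If $\mathcal{W}^p(\mu,\nu)=0$, an optimal $\pi$ is concentrated on the diagonal, and therefore $\mu=\nu$.
\item Triangle inequality. Take optimal couplings $\pi_{12}\in\Pi(\mu_1,\mu_2)$ and $\pi_{23}\in\Pi(\mu_2,\mu_3)$. The gluing lemma, obtained by disintegrating both with respect to the common marginal $\mu_2$ (which is valid on Polish spaces), gives a measure on $\mathcal{X}^3$ with these two-dimensional marginals. Its $(1,3)$-marginal couples $\mu_1$ and $\mu_3$, and Minkowski's inequality in $L^p$ of this measure gives $\mathcal{W}^p(\mu_1,\mu_3)\le\mathcal{W}^p(\mu_1,\mu_2)+\mathcal{W}^p(\mu_2,\mu_3)$.
\end{enumerate}

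\textbf{Separability.} Let $D\subset\mathcal{X}$ be countable and dense. Consider measures $\sum_i q_i\delta_{x_i}$ with finitely many atoms $x_i\in D$ and rational weights $q_i$. To approximate a given $\mu\in\mathcal{P}^p(\mathcal{X})$:
\begin{enumerate}[i)]
\item Using $\int\rho(x,x_0)^p\,d\mu<\infty$, choose a compact $K$ such that the mass outside $K$ contributes little $p$-th moment. Send that mass to $x_0$, or to a nearby point of $D$.
\item Cover $K$ by finitely many small balls centred at points of $D$ and push each cell of the resulting partition to its centre.
\item Round the weights to rationals, compensating the error at $x_0$.
\end{enumerate}
Each step has a transport map of small $L^p$ cost, so $\mathcal{W}^p$ is small.

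\textbf{Completeness (main obstacle).} Let $(\mu_n)$ be $\mathcal{W}^p$-Cauchy. The hard part is extracting a limit and showing that the convergence holds in $\mathcal{W}^p$ and not merely weakly. The plan has four steps:
\begin{enumerate}[i)]
\item Show $(\mu_n)$ is tight. Given $\varepsilon>0$, pick $N$ such that $\mathcal{W}^p(\mu_n,\mu_N)<\varepsilon^{2}$ for $n\ge N$. The finitely many measures $\mu_1,\dots,\mu_N$ are tight and concentrate on a finite union of balls. By Markov's inequality, $\mu_n$ puts mass at most $\varepsilon$ outside the $\varepsilon$-enlargement of that union. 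A diagonal argument over $\varepsilon=2^{-k}$ shows that $(\mu_n)$ is totally bounded in measure. Completeness of $\mathcal{X}$ then yields tightness; this is the standard Villani argument.
\item By Prokhorov, a subsequence converges weakly to some $\mu$.
\item Lower semicontinuity of $\mathcal{W}^p$ under weak convergence follows from the compactness of couplings used above. It gives $\mathcal{W}^p(\mu,\mu_n)\le\liminf_k\mathcal{W}^p(\mu_{n_k},\mu_n)$, which is small for large $n$.
\item The same bound shows $\mu\in\mathcal{P}^p(\mathcal{X})$, via the triangle inequality with $\delta_{x_0}$. It also shows that the full sequence converges to $\mu$ in $\mathcal{W}^p$.
\end{enumerate}

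The delicate point is step i): the Cauchy property must be turned into tightness, and completeness of $\mathcal{X}$ is used exactly there.
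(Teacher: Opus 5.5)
Your proposal is correct and is essentially the proof the paper defers to: Villani's Theorem 6.18, together with the gluing-lemma argument for the metric axioms and Lemma 6.14 for tightness of $\mathcal{W}^p$-Cauchy sequences. The only difference is cosmetic: you get the uniform mass bound outside the $\varepsilon$-enlargement by applying Markov's inequality to an optimal coupling, where Villani uses $\mathcal{W}^1\le\mathcal{W}^p$ and Kantorovich--Rubinstein duality with a Lipschitz cutoff (and your bound is really $2\varepsilon$ rather than $\varepsilon$, which is harmless).
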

\begin{proof}
    See proof of Theorem 6.18 in \cite{Villani}.
\end{proof}

We also recall that the Wasserstein distance metrizes the weak convergence in $\mathcal{P}^p(\mathcal{X})$.
\begin{theorem}\label{thm:equivalence_Wasserstein_weak_conv}
    Let $(\mathcal{X},\rho)$ be a Polish space and $p\geq 1$. If $\{\mu_k\}_{k\in\mathbb{N}}$ is a sequence of measures in $\mathcal{P}(\mathcal{X})$, then the following statements are equivalent:
    \begin{enumerate}[i)]
        \item $\{\mu_k\}_{k\in\mathbb{N}}$ converges weakly in $\mathcal{P}^p(\mathcal{X})$ to $\mu$.
        \item $\mathcal{W}^p(\mu_k,\mu)\xrightarrow{k\rightarrow\infty}0$.
    \end{enumerate}
\end{theorem}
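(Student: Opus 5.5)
The plan is to work with the standard meaning of ``weak convergence in $\mathcal{P}^p(\mathcal{X})$'' (Villani, Definition 6.8): $\mu_k\to\mu$ weakly, i.e.\ $\int f\,d\mu_k\to\int f\,d\mu$ for every $f\in C_b(\mathcal{X})$, \emph{and}
\begin{equation*}
    \int_{\mathcal{X}}\rho(x_0,x)^p\,\mu_k(dx)\longrightarrow\int_{\mathcal{X}}\rho(x_0,x)^p\,\mu(dx)
\end{equation*}
for some (hence every) $x_0\in\mathcal{X}$. I will prove the two implications separately. The direction $ii)\Rightarrow i)$ is soft, and for $i)\Rightarrow ii)$ I plan to use a Skorokhod coupling instead of optimal transport compactness arguments.

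For $ii)\Rightarrow i)$, first I show the weak convergence. Take $f$ bounded and $L$-Lipschitz and any coupling $\pi\in\Pi(\mu_k,\mu)$. By Jensen (since $p\geq 1$),
\begin{equation*}
    \Big|\int f\,d\mu_k-\int f\,d\mu\Big|\leq L\int\rho(x,y)\,\pi(dx,dy)\leq L\Big(\int\rho^p\,d\pi\Big)^{1/p}.
\end{equation*}
Taking the infimum over $\pi$ gives the bound $L\,\mathcal{W}^p(\mu_k,\mu)\to0$. The portmanteau theorem then upgrades convergence on bounded Lipschitz test functions to weak convergence. Next, the moments: $\big(\int\rho(x_0,x)^p\,\mu(dx)\big)^{1/p}=\mathcal{W}^p(\mu,\delta_{x_0})$, because $\Pi(\mu,\delta_{x_0})$ contains a single coupling. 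By the triangle inequality for $\mathcal{W}^p$ (Theorem~\ref{thm:Wasserstein_Polish}),
\begin{equation*}
    |\mathcal{W}^p(\mu_k,\delta_{x_0})-\mathcal{W}^p(\mu,\delta_{x_0})|\leq\mathcal{W}^p(\mu_k,\mu)\to0,
\end{equation*}
so the $p$-th moments converge.

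For $i)\Rightarrow ii)$, since $\mathcal{X}$ is Polish, Skorokhod's representation theorem gives random variables $X_k\sim\mu_k$ and $X\sim\mu$ on a common probability space with $X_k\to X$ almost surely. The joint law of $(X_k,X)$ lies in $\Pi(\mu_k,\mu)$, so
\begin{equation*}
    \mathcal{W}^p(\mu_k,\mu)^p\leq\mathbb{E}\big[\rho(X_k,X)^p\big],
\end{equation*}
and it suffices to show this expectation tends to $0$. We have $\rho(X_k,X)^p\to0$ almost surely and, by the triangle inequality, the pointwise bound
\begin{equation*}
    \rho(X_k,X)^p\leq 2^{p-1}\big(\rho(X_k,x_0)^p+\rho(X,x_0)^p\big)=:g_k.
\end{equation*}
Here $g_k\to g:=2^p\rho(X,x_0)^p$ almost surely, and $\mathbb{E}[g_k]\to\mathbb{E}[g]<\infty$ exactly by the moment-convergence part of $i)$.

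The main obstacle is that the dominating functions $g_k$ vary with $k$, so ordinary dominated convergence does not apply. I handle this with the generalized dominated convergence theorem, proved directly via Fatou. Apply Fatou to the nonnegative variables $g_k-\rho(X_k,X)^p$:
\begin{equation*}
    \mathbb{E}[g]\leq\liminf_k\mathbb{E}\big[g_k-\rho(X_k,X)^p\big]=\mathbb{E}[g]-\limsup_k\mathbb{E}\big[\rho(X_k,X)^p\big].
\end{equation*}
Since $\mathbb{E}[g]<\infty$, this gives $\limsup_k\mathbb{E}[\rho(X_k,X)^p]\leq0$. Hence $\mathcal{W}^p(\mu_k,\mu)\to0$, which completes the equivalence.
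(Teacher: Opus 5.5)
Your proof is correct, and it takes a different route from the one the paper relies on. The paper gives no argument of its own and defers to Villani's Chapter~6. There, the implication $i)\Rightarrow ii)$ is proved with optimal-transport machinery: one takes optimal plans $\pi_k\in\Pi(\mu_k,\mu)$, gets tightness of $\{\pi_k\}$ from tightness of the marginals via Prokhorov's theorem, and uses stability of optimality under weak limits to identify every limit point as the diagonal coupling $(\mathrm{Id},\mathrm{Id})_\#\mu$. Uniform integrability of $\rho(x_0,\cdot)^p$, which is built into weak convergence in $\mathcal{P}^p(\mathcal{X})$, then gives $\int\rho^p\,d\pi_k\to 0$. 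You instead use the fact that $\mathcal{W}^p$ is an infimum, so \emph{any} coupling gives an upper bound. The Skorokhod coupling is nowhere near optimal, but it converges almost surely, and your Fatou-based generalized dominated convergence step finishes the job. In that step the moment convergence in $i)$ plays exactly the role of uniform integrability, so you need neither existence nor stability of optimal plans. Your $ii)\Rightarrow i)$ argument is also lighter than going through optimal plans: bounded Lipschitz test functions give weak convergence, and the exact identity $\big(\int\rho(x_0,x)^p\,\mu(dx)\big)^{1/p}=\mathcal{W}^p(\mu,\delta_{x_0})$ combined with the triangle inequality gives convergence of moments. What your approach buys is a short, self-contained proof whose only substantial inputs are Skorokhod's representation theorem and the triangle inequality for $\mathcal{W}^p$, which is where the Polish assumption is used. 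Villani's route additionally shows that the optimal plans themselves converge to the diagonal coupling. You should state explicitly the standing assumption $\mu_k,\mu\in\mathcal{P}^p(\mathcal{X})$, which the statement leaves implicit. You use it when subtracting in the triangle-inequality step, and you need $\mathbb{E}[g_k]<\infty$ to split $\mathbb{E}[g_k-\rho(X_k,X)^p]$ in the Fatou step.
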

\begin{proof}
    See proof of Theorem 6.7 in \cite[Chapter 6]{Villani}.
\end{proof}

Let $C^d:=C([0,T];\mathbb{R}^d)$ be the space of $\mathbb{R}^d$-valued continuous functions over $[0,T]$ equipped with the standard supremum norm, denoted $\Vert\cdot\Vert_{\infty,T}$.

It is a classical result that $\left(C^d,\lVert\cdot\rVert_{\infty,T}\right)$ is a Polish space (see, e.g., \cite[Chapter 18]{FristedtBert1997Amat}). Therefore, for every $p\geq 1$ the space of probability measures over $\mathcal{P}^p(C^d)$ endowed with the Wasserstein metric
\begin{equation*}
    \mathcal{W}^p_T(\mu,\nu):= \inf_{\pi\in\Pi(\mu,\nu)}\left[\int_{C^d\times C^d} \lVert x-y\rVert_{\infty,T}^p\pi(dx,dy)\right]^{1/p},\qquad \forall\mu,\nu\in\mathcal{P}^p(C^d)
\end{equation*}
is also a Polish space.

Moreover, for every $\mu,\nu\in\mathcal{P}^p(C^d)$ and every $t\in[0,T]$, we also define
\begin{equation*}
    \mathcal{W}^p_t(\mu,\nu):= \inf_{\pi\in\Pi(\mu,\nu)}\left[\int_{C^d\times C^d} \lVert x-y\rVert_{\infty,t}^p\pi(dx,dy)\right]^{1/p},\qquad \forall\mu,\nu\in\mathcal{P}^p(C^d).
\end{equation*}
Let us notice that for every $t\in[0,T]$, $\mathcal{W}^p_t$ is not a distance on $\mathcal{P}^p(C^d)=\mathcal{P}^p(C([0,T];\mathbb{R}^d))$, but rather on $\mathcal{P}^p(C([0,t];\mathbb{R}^d))$.

We are now ready to discuss the well-posedness of the path-dependent McKean-Vasov SDE \eqref{eq:path_dep_MKV}.
\smallskip

\begin{definition}\label{def:strong}
    We say that \eqref{eq:path_dep_MKV} admits a strong solution if for any filtered probability space $(\Omega,\mathcal{F},\mathbb{F}=\{\mathcal{F}_t\}_t,\mathbb{P})$ equipped with an $\mathbb{F}$-Brownian motion $W$, there exists an $\mathbb{F}$-adapted process $Y$ that satisfies \eqref{eq:path_dep_MKV}.
\end{definition}
\medskip

\begin{definition}\label{def:weak}
    We say that \eqref{eq:path_dep_MKV} admits a weak solution if there exist a filtered probability space $\left(\Omega,\mathcal{F},\mathbb{F}=\{\mathcal{F}_t\}_{t\geq 0},\mathbb{P}\right)$ equipped with an $\mathbb{F}$-Brownian motion W and an $\mathbb{F}$-adapted process $Y$ that satisfies \eqref{eq:path_dep_MKV}.
\end{definition}
\medskip

\begin{definition}\label{def:strong_unique}
     Let $Y$ and $Y^\prime$ be two strong solutions to SDE \eqref{eq:path_dep_MKV}. We say that strong uniqueness holds for equation \eqref{eq:path_dep_MKV} if $Y$ and $Y^\prime$ are indistinguishable.
\end{definition}
\medskip

\begin{definition}\label{def:law_unique}
    Let $Y$ and $Y^\prime$ be two weak solutions to SDE \eqref{eq:path_dep_MKV} on possibly  different filtered probability spaces $\left(\Omega,\mathcal{F},\mathbb{F}=\{\mathcal{F}_t\}_{t\geq 0},\mathbb{P}\right)$ and $(\Omega^\prime,\mathcal{F}^\prime,\mathbb{F}^\prime=\{\mathcal{F}^\prime_t\}_{t\geq 0},\mathbb{P}^\prime)$. We say that SDE \eqref{eq:path_dep_MKV} admits a weak solution unique in the sense of probability law if $\mathcal{L}(Y)=\mathcal{L}(Y^\prime)$.
\end{definition}
\medskip

The following well-posedness result holds.
\begin{theorem}\label{eq:well_posed_path_dep_MKV}
    Under the assumption of Hypothesis \ref{hypothesis_A} and Hypothesis \ref{hypothesis_B}, the path-dependent McKean--Vlasov SDE \eqref{eq:path_dep_MKV} admits a pathwise unique strong solution as in Definition \ref{def:strong} and Definition \ref{def:strong_unique}. Moreover, it admits a weak solution unique in the sense of probability law as in Definition \ref{def:weak} and Definition \ref{def:law_unique}.
\end{theorem}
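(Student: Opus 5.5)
The plan is the classical Sznitman-type fixed-point argument on $\mathcal{P}^2(C^d)$ equipped with the Wasserstein metric $\mathcal{W}^2_T$.

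\textbf{Step 1: frozen equation.} Fix $\mu\in\mathcal{P}^2(C^d)$. For $s\in[0,T]$ set
\begin{equation*}
g_\mu(s):=\int_{C^d}\mathcal{G}(z_{\cdot\wedge s})\,\mu(dz).
\end{equation*}
By Hypothesis \ref{hypothesis_B}, $|\mathcal{G}(z_{\cdot\wedge s})|\le |\mathcal{G}(0)|+L_{\mathcal{G}}\lVert z\rVert_{\infty,T}$, so $g_\mu$ is finite. By dominated convergence and continuity of $s\mapsto z_{\cdot\wedge s}$, $g_\mu$ is also measurable, indeed continuous. Then $(s,x)\mapsto b(s,x,g_\mu(s))$ and $(s,x)\mapsto\sigma(s,x,g_\mu(s))$ are bounded and globally Lipschitz in $x$ by Hypothesis \ref{hypothesis_A}. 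Classical SDE theory therefore gives a pathwise unique strong solution $Y^\mu$ with $\mathbb{E}\lVert Y^\mu\rVert_{\infty,T}^2<\infty$, because $\eta\in L^2$ and the coefficients are bounded. Define $\Phi(\mu):=\mathcal{L}(Y^\mu)\in\mathcal{P}^2(C^d)$. A solution of \eqref{eq:path_dep_MKV} is exactly $Y^\mu$ with $\mu$ a fixed point of $\Phi$.

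\textbf{Step 2: contraction estimate.} Take $\mu,\nu$ and drive $Y^\mu,Y^\nu$ with the same $\eta$ and $W$. For any coupling $\pi\in\Pi(\mu,\nu)$, Hypothesis \ref{hypothesis_B} gives
\begin{equation*}
|g_\mu(s)-g_\nu(s)|\le L_{\mathcal{G}}\int\lVert x-y\rVert_{\infty,s}\,\pi(dx,dy).
\end{equation*}
Taking the infimum over $\pi$ and using Jensen yields $|g_\mu(s)-g_\nu(s)|\le L_{\mathcal{G}}\mathcal{W}^2_s(\mu,\nu)$. The same bound holds with $\widetilde{\mathcal{G}}$ if the diffusion uses it. Next apply Doob/BDG and Cauchy--Schwarz to the difference of the two SDEs, together with the Lipschitz property of Hypothesis \ref{hypothesis_A}. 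This gives
\begin{equation*}
\mathbb{E}\lVert Y^\mu-Y^\nu\rVert^2_{\infty,t}\le C\int_0^t\mathbb{E}\lVert Y^\mu-Y^\nu\rVert^2_{\infty,s}\,ds + C\int_0^t \mathcal{W}^2_s(\mu,\nu)^2\,ds .
\end{equation*}
Gronwall then gives $\mathbb{E}\lVert Y^\mu-Y^\nu\rVert^2_{\infty,t}\le C_T\int_0^t\mathcal{W}^2_s(\mu,\nu)^2ds$. Since $(Y^\mu,Y^\nu)$ is a coupling of $\Phi(\mu)$ and $\Phi(\nu)$, we conclude
\begin{equation*}
\mathcal{W}^2_t(\Phi(\mu),\Phi(\nu))^2\le C_T\int_0^t\mathcal{W}^2_s(\mu,\nu)^2\,ds .
\end{equation*}

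\textbf{Step 3: fixed point.} Iterating the estimate gives $\mathcal{W}^2_T(\Phi^k\mu,\Phi^k\nu)^2\le \frac{(C_TT)^k}{k!}\mathcal{W}^2_T(\mu,\nu)^2$. Hence some power of $\Phi$ is a contraction on the complete space $(\mathcal{P}^2(C^d),\mathcal{W}^2_T)$, which is complete by Theorem \ref{thm:Wasserstein_Polish}. So $\Phi$ has a unique fixed point $\mu^*$, and $Y:=Y^{\mu^*}$ is a strong solution on any filtered space carrying $W$ and $\eta$. This yields existence in the sense of Definition \ref{def:strong}.

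\textbf{Step 4: uniqueness.} For pathwise uniqueness, let $Y'$ be another strong solution. Its law $\mathcal{L}(Y')$ has a finite second moment by boundedness of the coefficients, and it is a fixed point of $\Phi$, because $Y'$ solves the frozen equation with $\mu=\mathcal{L}(Y')$ and that equation has a pathwise unique solution. Therefore $\mathcal{L}(Y')=\mu^*$, and pathwise uniqueness of the frozen SDE gives $Y'=Y$ up to indistinguishability. For uniqueness in law, any weak solution on any space likewise has a law that is a fixed point of $\Phi$. The frozen equation has pathwise uniqueness, so by Yamada--Watanabe its law is determined by $\mu$ and $\mathcal{L}(\eta)$. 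Thus the map $\Phi$ is intrinsic, independent of the probability space, and its fixed point is unique, so all weak solutions share the law $\mu^*$.

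\textbf{Main obstacle.} The delicate point is Step 2, namely controlling $g_\mu-g_\nu$ by $\mathcal{W}^2_s$ restricted to $[0,s]$ rather than $\mathcal{W}^2_T$. This restriction is what allows Gronwall and the factorial iteration to work. It relies on Hypothesis \ref{hypothesis_B} involving only $\lVert\cdot\rVert_{\infty,t}$, and on the fact that the infimum over couplings on $[0,T]$ dominates the truncated cost. I expect this to be manageable but to require care, together with checking the measurability of $s\mapsto g_\mu(s)$.
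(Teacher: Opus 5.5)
Your proposal is correct and follows essentially the same Sznitman-type fixed-point argument as the paper. The shared steps are the frozen equation, the time-localized estimate $\mathcal{W}^2_t(\Phi(\mu),\Phi(\nu))^2\le C_T\int_0^t\mathcal{W}^2_s(\mu,\nu)^2\,ds$ (obtained from BDG, the Lipschitz bounds, Hypothesis~\ref{hypothesis_B} and Gronwall), the factorial iteration yielding a unique fixed point, and uniqueness by recognising the law of any solution as a fixed point.

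The differences are cosmetic. You use the ``contracting power'' form of Banach's theorem where the paper gives an explicit Cauchy-sequence argument. You also obtain strong existence directly by solving the frozen SDE at $\mu^*$, invoking Yamada--Watanabe only for the frozen equation so that $\Phi$ does not depend on the underlying space. The paper instead applies Yamada--Watanabe to the McKean--Vlasov equation itself.
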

\begin{proof}
    Following the approach of \cite{Sznitman}, we consider the map
    \begin{equation}\label{eq:def_map_Psi}
        \Psi:\mathcal{P}^2(C^d)\rightarrow\mathcal{P}^2(C^d),\qquad \Psi(\nu)=\mathcal{L}(Y^\nu),
    \end{equation}
    where
    \begin{equation}\label{eq:path_dep_MKV_fixed_measure}
        Y^{\nu}_t = \eta + \int_0^t b\left(s,Y^{\nu}_s,\mathbb{E}_{z\sim\nu}\left[\mathcal{G}(z_{\cdot\wedge s})\right]\right)ds + \int_0^t \sigma\left(s,Y^{\nu}_s,\mathbb{E}_{z\sim\nu}\left[\mathcal{G}(z_{\cdot\wedge s})\right]\right)dW_s,
    \end{equation}
    with $t\in[0,T]$.

    For a fixed $\nu\in\mathcal{P}^2(C^d)$, SDE \eqref{eq:path_dep_MKV_fixed_measure} admits a pathwise unique strong solution. In fact, by the Lipschitz continuity assumption on the drift and diffusion coefficients,
    \begin{equation*}
        \begin{aligned}
            &\big\lVert b\left(t,x,\mathbb{E}_{z\sim\nu}\left[\mathcal{G}(z_{\cdot\wedge t})\right]\right) - b\left(t,y,\mathbb{E}_{z\sim\nu}\left[\mathcal{G}(z_{\cdot\wedge t})\right]\right)\big\rVert_{\mathbb{R}^d}\\
            &+ \big\lVert \sigma\left(t,x,\mathbb{E}_{z\sim\nu}\left[\mathcal{G}(z_{\cdot\wedge t})\right]\right) - \sigma\left(t,y,\mathbb{E}_{z\sim\nu}\left[\mathcal{G}(z_{\cdot\wedge t})\right]\right)\big\rVert_F \leq L\lVert x-y\rVert_{\mathbb{R}^d}.
        \end{aligned}
    \end{equation*}

    Also, notice that the image of $\Psi$ belongs to $\mathcal{P}^2(C^d)$. In fact, for every $t\in[0,T]$
    \begin{equation*}
        \begin{aligned}
            \mathbb{E}\left[\sup_{0\leq t\leq T}\lVert Y^{\nu}_t\rVert_{\mathbb{R}^d}^2\right] \leq &3\mathbb{E}\left[\lVert \eta\rVert_{\mathbb{R}^d}^2\right] + 3\mathbb{E}\left[\sup_{0\leq t\leq T}\Bigg\lVert \int_0^t b\left(s,Y^{\nu}_s,\mathbb{E}_{z\sim\nu}\left[\mathcal{G}(z_{\cdot\wedge s})\right]\right)ds \Bigg\rVert_{\mathbb{R}^d}^2\right]\\
            &+3\mathbb{E}\left[\sup_{0\leq t\leq T}\Bigg\lVert \int_0^t \sigma\left(s,Y^{\nu}_s,\mathbb{E}_{z\sim\nu}\left[\mathcal{G}(z_{\cdot\wedge s})\right]\right)dW_s \Bigg\rVert_{\mathbb{R}^d}^2\right].
        \end{aligned}
    \end{equation*}
    
    By Hypothesis \ref{hypothesis_A}, $\mathbb{E}\left[\lVert \eta\rVert_{\mathbb{R}^d}^2\right] <\infty$.
    
    By Jensen's inequality and the boundedness assumption on the drift coefficient $b$,
    \begin{equation*}
        \begin{aligned}
            &\mathbb{E}\left[\sup_{0\leq t\leq T} \Bigg\lVert \int_0^t b\left(s,Y^{\nu}_s,\mathbb{E}_{z\sim\nu}\left[\mathcal{G}(z_{\cdot\wedge s})\right]\right)ds \Bigg\rVert_{\mathbb{R}^d}^2\right]\\
            &\leq \mathbb{E}\left[ \sup_{0\leq t\leq T}t\int_0^t \lVert b\left(s,Y^{\nu}_s,\mathbb{E}_{z\sim\nu}\left[\mathcal{G}(z_{\cdot\wedge s})\right]\right)\rVert_{\mathbb{R}^d}^2ds\right] \leq T^2M_b^2.
        \end{aligned}
    \end{equation*}
    By Burkholder-Davis-Gundy inequality (see, e.g., \cite[Theorem 3.28]{KaratzasShreve}) and the boundedness assumption on the diffusion coefficient $\sigma$,
    \begin{equation*}
        \begin{aligned}
            &\mathbb{E}\left[\sup_{0\leq t\leq T}\Bigg\lVert \int_0^t \sigma\left(s,Y^{\nu}_s,\mathbb{E}_{z\sim\nu}\left[\mathcal{G}(z_{\cdot\wedge s})\right]\right)dW_s \Bigg\rVert_{\mathbb{R}^d}^2\right]\\
            &\leq \mathbb{E}\left[\sup_{0\leq t\leq T}\int_0^t \lVert\sigma\left(s,Y^{\nu}_s,\mathbb{E}_{z\sim\nu}\left[\mathcal{G}(z_{\cdot\wedge s})\right]\right)\rVert_{\mathbb{R}^d}^2ds\right]\leq TM_\sigma^2.
        \end{aligned}
    \end{equation*}
    Therefore, $\mathbb{E}\left[\lVert Y^{\nu}_t\rVert_{\mathbb{R}^d}^2\right] <\infty$.
    
    By proving that $\Psi$ admits a unique fixed point, we obtain that the path-dependent McKean--Vlasov SDE \eqref{eq:path_dep_MKV} admits a weak solution. As a byproduct of the computations, we obtain that also uniqueness in the sense of probability law and pathwise uniqueness holds for equation \eqref{eq:path_dep_MKV}. Then, by Yamada and Watanabe's result we conclude that \eqref{eq:path_dep_MKV} also admits a strong solution.
    
    Let $\nu,\nu^\prime\in\mathcal{P}^2(C^d)$. By \eqref{eq:def_map_Psi}, $\Psi(\nu)$ denotes the law of the solution to the SDE
    \begin{equation*}
        Y^{\nu}_t = \eta + \int_0^t b\left(s,Y^{\nu}_s,\mathbb{E}_{z\sim\nu}\left[\mathcal{G}(z_{\cdot\wedge s})\right]\right)ds + \int_0^t \sigma\left(s,Y^{\nu}_s,\mathbb{E}_{z\sim\nu}\left[\mathcal{G}(z_{\cdot\wedge s})\right]\right)dW_s 
    \end{equation*}
    and $\Psi(\nu^\prime)$ denotes the law of the solution to the SDE
    \begin{equation*}
        Y^{\nu^\prime}_t = \eta + \int_0^t b\left(s,Y^{\nu^\prime}_s,\mathbb{E}_{z^\prime\sim\nu^\prime}\left[\mathcal{G}(z^\prime_{\cdot\wedge s})\right]\right)ds + \int_0^t \sigma\left(s,Y^{\nu^\prime}_s,\mathbb{E}_{z^\prime\sim\nu^\prime}\left[\mathcal{G}(z^\prime_{\cdot\wedge s})\right]\right)dW_s,
    \end{equation*}
    for every $t\in[0,T]$.
    
    Let $\pi\in\Pi(\nu,\nu^\prime)$ be an arbitrary coupling of $\nu$ and $\nu^\prime$. We have that
    \begin{align}
        &\mathbb{E}\left[ \big\lVert Y^\nu - Y^{\nu^\prime} \big\rVert_{\infty,t}^2 \right]\notag\\
        &\begin{aligned}
            \leq\, &2\mathbb{E}\Bigg[ \sup_{0\leq r \leq t} r \int_0^{r } \Big\lVert b\left(s,Y^{\nu}_s,\mathbb{E}_{z\sim\nu}\left[\mathcal{G}(z_{\cdot\wedge s})\right]\right) - b\left(s,Y^{\nu^\prime}_s,\mathbb{E}_{z^\prime\sim\nu^\prime}\left[\mathcal{G}(z^\prime_{\cdot\wedge s})\right]\right) \Big\rVert_{\mathbb{R}^d}^2ds \Bigg]\\
            &+2\mathbb{E}\Bigg[ \sup_{0\leq r\leq t}\Bigg\lVert \int_0^r \left[\sigma\left(s,Y^{\nu}_s,\mathbb{E}_{z\sim\nu}\left[\mathcal{G}(z_{\cdot\wedge s})\right]\right) - \sigma\left(s,Y^{\nu^\prime}_s,\mathbb{E}_{z^\prime\sim\nu^\prime}\left[\mathcal{G}(z^\prime_{\cdot\wedge s})\right]\right)\right]dW_s \Bigg\rVert_F^2\Bigg]
        \end{aligned}\label{eq:proof_ex_uniq_path_dep_MKV_1}\\
        &\begin{aligned}
            \leq\, &2T\mathbb{E}\Bigg[ \int_0^t \Big\lVert b\left(s,Y^{\nu}_s,\mathbb{E}_{z\sim\nu}\left[\mathcal{G}(z_{\cdot\wedge s})\right]\right) - b\left(s,Y^{\nu^\prime}_s,\mathbb{E}_{z^\prime\sim\nu^\prime}\left[\mathcal{G}(z^\prime_{\cdot\wedge s})\right]\right) \Big\rVert_{\mathbb{R}^d}^2ds \Bigg]\\
            &+2C_2\mathbb{E}\Bigg[ \int_0^t \Big\lVert \sigma\left(s,Y^{\nu}_s,\mathbb{E}_{z\sim\nu}\left[\mathcal{G}(z_{\cdot\wedge s})\right]\right) - \sigma\left(s,Y^{\nu^\prime}_s,\mathbb{E}_{z^\prime\sim\nu^\prime}\left[\mathcal{G}(z^\prime_{\cdot\wedge s})\right]\right)\Big\rVert^2_F ds\Bigg]
        \end{aligned}\label{eq:proof_ex_uniq_path_dep_MKV_2}\\
        &\leq 8(1+T)L^2\int_0^t\left(\mathbb{E}\left[\big\lVert Y_s^\nu-Y_s^{\nu^\prime}\big\rVert_{\mathbb{R}^d}^2\right] + C_2L_\mathcal{G}^2\int_{C^d\times C^d} \lVert z-z^\prime \rVert_{\infty,s}^2\pi(dz,dz^\prime)\right)ds\label{eq:proof_ex_uniq_path_dep_MKV_3}\\
        &\leq 8(1+T)L^2\int_0^t\left(\mathbb{E}\left[\big\lVert Y^\nu-Y^{\nu^\prime}\big\rVert_{\infty,s}^2\right] + C_2L_\mathcal{G}^2\int_{C^d\times C^d} \lVert z-z^\prime \rVert_{\infty,s}^2\pi(dz,dz^\prime)\right)ds,\notag
    \end{align}
     where \eqref{eq:proof_ex_uniq_path_dep_MKV_1} is due to the integral Cauchy-Schwartz inequality, \eqref{eq:proof_ex_uniq_path_dep_MKV_2} by the multidimensional Burkholder-Davis-Gundy inequality (see, e.g., \cite[Theorem 3.28]{KaratzasShreve}), and \eqref{eq:proof_ex_uniq_path_dep_MKV_3} by the assumptions in Hypothesis \ref{hypothesis_A} and \eqref{hypothesis_B}.
     
     By Gronwall's lemma,
     \begin{equation}\label{eq:proof_ex_uniq_path_dep_MKV_4}
         \mathbb{E}\left[ \big\lVert Y^\nu_\cdot - Y^{\nu^\prime}_\cdot \big\rVert_{\infty,t}^2\right]\leq e^{8(1+T)L^2T}C_2L_\mathcal{G}^2\int_0^t\int_{C^d\times C^d} \lVert z-z^\prime \rVert_{\infty,s}^2\pi(dz,dz^\prime)ds.
     \end{equation}

     By taking the infimum over the space $\Pi(\nu,\nu^\prime)$ of couplings of $\nu$ and $\nu^\prime$ over both sides of \eqref{eq:proof_ex_uniq_path_dep_MKV_4}, we obtain
    \begin{equation}\label{eq:proof_ex_uniq_path_dep_MKV_5}
        \mathbb{E}\left[ \big\lVert Y^\nu_\cdot - Y^{\nu^\prime}_\cdot \big\rVert_{\infty,t}^2 \right]\leq e^{8(1+T)L^2T}C_2L_\mathcal{G}^2\int_0^t [\mathcal{W}_s^2(\nu,\nu^\prime)]^2ds.
    \end{equation}

    Moreover,
    \begin{equation}\label{eq:contraction_ineq_path_MKV}
        [\mathcal{W}_t^2(\Psi(\nu),\Psi(\nu^\prime))]^2\leq\mathbb{E}\left[ \big\lVert Y^\nu_\cdot - Y^{\nu^\prime}_\cdot \big\rVert_{\infty,t}^2 \right]\leq e^{8(1+T)L^2T}C_2L_\mathcal{G}^2\int_0^t [\mathcal{W}_s^2(\nu,\nu^\prime)]^2ds,
    \end{equation}
    where the first inequality is due to the fact that we are picking a specific coupling of $\nu$ and $\nu^\prime$, while the second inequality is due to \eqref{eq:proof_ex_uniq_path_dep_MKV_5}.

    Assume there exist two weak solutions $Y,Y^\prime$ to the path-dependent McKean--Vlasov SDE \eqref{eq:path_dep_MKV}, as in Definition \ref{def:weak}. Let us denote with $\mu$ and $\mu^\prime$ the laws of $Y$ and $Y^\prime$, respectively. Then, $\Psi(\mu)=\mu$ and $\Psi(\mu^\prime)=\mu^\prime$. By \eqref{eq:contraction_ineq_path_MKV},
    \begin{equation*}
        [\mathcal{W}_T^2(\mu,\mu^\prime)]^2=[\mathcal{W}_T^2(\Psi(\mu),\Psi(\mu^\prime))]^2\leq e^{8(1+T)L^2T}C_2L_\mathcal{G}^2\int_0^T [\mathcal{W}_s^2(\mu,\mu^\prime)]^2ds.
    \end{equation*}
    By Gronwall's lemma,
    \begin{equation}\label{eq:weak_uniq_path_MKV_SDE}
        \mathcal{W}_T^2(\mu,\mu^\prime)=0,
    \end{equation}
    which implies uniqueness in probability law for equation \eqref{eq:path_dep_MKV} (see Definition \ref{def:law_unique}).

    On the other hand, assume there exist two strong solutions $Y,Y^\prime$ to the path-dependent McKean--Vlasov SDE \eqref{eq:path_dep_MKV}, as in Definition \ref{def:strong}. By substituting \eqref{eq:weak_uniq_path_MKV_SDE} in \eqref{eq:proof_ex_uniq_path_dep_MKV_5}, we obtain
    \begin{equation*}
        \mathbb{E}\left[ \big\lVert Y^\nu - Y^{\nu^\prime} \big\rVert^2_{\infty,T}  \right] = 0,
    \end{equation*}
    which implies strong uniqueness for equation \eqref{eq:path_dep_MKV} (see Definition \ref{def:strong_unique}).
   
    Now, fix a measure $m\in \mathcal{P}^2\left(C \right)$.   By iteratively applying   the map $\Psi$  to the measure $m$, we define a sequence of measures $  \{m_k\}_{k\in \mathbb{N}}$ in $\mathcal{P}^2\left(C^d \right)$  such that
    \begin{equation}\label{eq:def_m_k_ex_uni_SDE}
        m_k\,:=\,  \Psi^k(m) = \Psi\left(\Psi^{k-1}(m)  \right).
    \end{equation} 
    
    By iterating \eqref{eq:contraction_ineq_path_MKV}, 
    \begin{equation*}
         \left[\mathcal{W}^2_T\left( \Psi^{k+1}(m),\Psi^k(m) \right)\right]^2  \leq \frac{\left(e^{8(1+T)L^2T}C_2L_\mathcal{G}^2\right)^k}{k!} [\mathcal{W}_T^2\left( \Psi(m),m \right)]^2.
    \end{equation*}

    Therefore, for any $\ell,n\in\mathbb{N}$,
    \begin{equation*}
         \left[\mathcal{W}^2_T\left( \Psi^\ell(m),\Psi^n(m) \right)\right]^2  \leq [\mathcal{W}_T^2\left( \Psi(m),m \right)]^2\sum_{k=\ell}^n\frac{\left(e^{8(1+T)L^2T}C_2L_\mathcal{G}^2\right)^k}{k!}.
    \end{equation*}

    Since the tail
    \begin{equation*}
        \sum_{k=\ell}^\infty\frac{\left(e^{8(1+T)L^2T}C_2L_\mathcal{G}^2\right)^k}{k!} \xrightarrow{\ell\rightarrow\infty} 0,
    \end{equation*}
    $\{m_k\}_{k\in\mathbb{N}}$ is a Cauchy sequence in $\left(\mathcal{P}^2(C ), \mathcal W^2_T)\right)$. Then, by Theorem \ref{thm:equivalence_Wasserstein_weak_conv}, there exists a probability measure $\bar{\mu}\in\mathcal{P}^2\left(C \right)$ such that $m_k$ weakly converges to  $\bar{\mu}$ as $k$ increases to $\infty$. From \eqref{eq:def_m_k_ex_uni_SDE}, we get $\bar{\mu}=\Psi(\bar{\mu})=\mathcal{L}(Y^{\bar{\mu}})$. This concludes the proof of existence of a weak solution to the path-dependent McKean--Vlasov SDE \eqref{eq:path_dep_MKV}. By the results of Yamada and Watanabe \cite[p. 308]{KaratzasShreve}, there exists a strong unique solution to equation \eqref{eq:path_dep_MKV}.
\end{proof}

Finally, let us remark that by the same arguments as above, Theorem \ref{eq:well_posed_path_dep_MKV} can be easily extended to consider path dependent McKean--Vlasov SDEs \eqref{eq:path_dep_MKV} with two different path functionals in the drift and in the diffusion; that is,
\begin{equation*}
    Y_t = \eta + \int_0^t b\left(s,Y_s,\mathbb{E}\left[\mathcal{G}(Y_{\cdot\wedge s})\right]\right)ds + \int_0^t\sigma \left(s,Y_s,\mathbb{E}\left[\widetilde{\mathcal{G}}(Y_{\cdot\wedge s})\right]\right)dW_s;\qquad t\in[0,T],
\end{equation*}
where $\mathcal{G},\widetilde{\mathcal{G}}:C\left([0,T];\mathbb{R}^d\right)\rightarrow\mathbb{R}$ satisfy Hypothesis \ref{hypothesis_B}.

\section{The signature McKean--Vlasov SDE}\label{sec:well_posed_path_sig_MKV}
In this section, we establish strong existence and uniqueness for the signature McKean--Vlasov SDE
\begin{equation}\label{eq:sig_MKV_SDE}
    X_t = \eta + \int_0^t b\left(s,X_s,\Big\langle \ell,\mathbb{E}\left[S(\augpr{X})_s\right]\Big\rangle\right)ds + \int_0^t \sigma\left(s,X_s,\Big\langle \ell,\mathbb{E}\left[S(\augpr{X})_s\right]\Big\rangle\right)dW_s,
\end{equation}
with $t\in[0,T]$ and
\begin{equation}\label{eq:linear_comb_in_sig_MKV_SDE}
    \langle\ell,S(\widehat{\mathbb{X}})_t\rangle := \sum_{0\leq |I|\leq N} \alpha_I\langle \varepsilon_I,S(\widehat{\mathbb{X}})_t\rangle,
\end{equation}
where $N\in\mathbb{N}$, $\alpha_I\in\mathbb{R}$, $I=\{i_1,\dots,i_m\}\in\{1,\dots,d+1\}^m$ is a multi-index, and $\varepsilon_I:=\varepsilon_{i_1}\otimes\dots\otimes\varepsilon_{i_m}$ are the basis elements of $\mathbb{R}^{(d+1)\otimes m}$.

In order to do this, we first introduce a notion Wasserstein distance suitable for our framework. Specifically, we have the following result.
\begin{proposition}
    Let $k\in\mathbb{N}$ and $p\geq 1$. Let $\widehat{G\Omega}_p(\mathbb{R}^{k+1})$ be the space of time-augmented geometric $p$-rough paths $\widehat{C}^{0,p-\textup{var}}([0,T],G^{\lfloor p\rfloor}(\mathbb{R}^{k+1}))$ (Remark \ref{remark:invariant_time_par}) with the topology induced by the inhomogeneous $p$-variation norm $d_{\textup{inhom $p$-var;$[0,T]$}}$ (Definition \ref{def:inhom_p_var_norm}). Then $\left(\widehat{G\Omega}_p(\mathbb{R}^{k+1}),d_{\textup{inhom $p$-var;$[0,T]$}}\right)$ is a Polish space. Furthermore, $(\mathcal{P}^p\left(\widehat{G\Omega}_p(\mathbb{R}^{k+1})\right),\overline{\mathcal{W}}^p_T)$, where $\overline{\mathcal{W}}^p_T$ is the Wasserstein distance defined in Definition \ref{def:Wasserstein_norm} with $\rho=d_{\textup{inhom $p$-var;$[0,T]$}}$, is a Polish space.
\end{proposition}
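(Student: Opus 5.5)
The plan has two parts. First, show that the metric space $\left(\widehat{G\Omega}_p(\mathbb{R}^{k+1}),d_{\textup{inhom $p$-var;$[0,T]$}}\right)$ is Polish. Then apply Theorem \ref{thm:Wasserstein_Polish} with $\mathcal{X}=\widehat{G\Omega}_p(\mathbb{R}^{k+1})$ and $\rho=d_{\textup{inhom $p$-var;$[0,T]$}}$. That theorem immediately gives that $\overline{\mathcal{W}}^p_T$ is a metric on $\mathcal{P}^p\left(\widehat{G\Omega}_p(\mathbb{R}^{k+1})\right)$ and that this space is Polish. So all the work sits in the first part.

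\textbf{Metric.} For $d_{\textup{inhom $p$-var;$[0,T]$}}$ to be a genuine metric, the only missing property is definiteness. Since every element starts at $\mathbf{x}_0=(1,0,\dots,0)$ (after the normalisation of Remark \ref{remark:rough_path_starting_point}), $d(\mathbf{x},\mathbf{y})=0$ forces $\mathbf{x}_{0,t}=\mathbf{y}_{0,t}$ for all $t$, and hence $\mathbf{x}=\mathbf{y}$. This is the Hausdorff property recorded in Remark \ref{remark:invariant_time_par}.

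\textbf{Separability.} By Definition \ref{def:geom_p_rp}, each geometric rough path is a $d_{\textup{inhom $p$-var}}$-limit of $S^{\leq\lfloor p\rfloor}(x_n)$ with $x_n$ of bounded variation. Using the continuity of the step-$\lfloor p\rfloor$ signature map in $1$-variation (Theorem 9.10 / Proposition 8.7 in \cite{Friz_Victoir_2010}, as in Corollary \ref{corollary:local_Lip}), I would approximate each $x_n$ in $1$-variation by piecewise linear paths with rational breakpoints and rational values. This gives a countable dense set. To stay in the time-augmented class, I would keep the first component equal to $t$. This is possible because in the augmented setting the path is $(t,x_t)$, and only the $x$-component needs to be approximated; the time component is already piecewise linear.

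\textbf{Completeness.} This is the main obstacle. Take a Cauchy sequence $(\mathbf{x}_n)$ for $d_{\textup{inhom $p$-var}}$. Because all paths start at the unit element, the levelwise increments $\mathbf{x}^{(m)}_{n;0,t}$ are uniformly Cauchy, so $\mathbf{x}_n$ converges uniformly to a continuous path $\mathbf{x}$ with values in $G^{\lfloor p\rfloor}$, which is a closed subset of $T^{\lfloor p\rfloor}$.
\begin{itemize}
\item Lower semicontinuity of $p$-variation under pointwise convergence gives finite $p$-variation of $\mathbf{x}$ and $d(\mathbf{x}_n,\mathbf{x})\to0$: fix a partition, pass to the limit in $n$, then take the supremum over partitions.
\item $\mathbf{x}$ is geometric by a diagonal argument: pick bounded-variation approximants of each $\mathbf{x}_n$ within $1/n$.
\item The time component passes to the limit, since each first level equals $t$ in the first coordinate.
\end{itemize}

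If this direct argument becomes messy, I would instead invoke the known fact that $C^{0,p\textup{-var}}_o$ is Polish under $d_{\textup{hom $p$-var}}$ (\cite[Chapter 8]{Friz_Victoir_2010}), together with the fact that the homogeneous and inhomogeneous distances have the same Cauchy sequences and topology. I would then check that the time-augmented subset is closed, and conclude that it is Polish.
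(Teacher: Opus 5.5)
Your fallback route is the paper's proof. The paper gives no argument of its own: it cites \cite[Proposition 8.25]{Friz_Victoir_2010} (or \cite[Proposition 39]{Chevyrev2022}) for the Polish property and then applies Theorem \ref{thm:Wasserstein_Polish}. The paper leaves implicit that the time-augmented subset is closed, and you supply this correctly. Your completeness argument (uniform convergence from the common starting point, lower semicontinuity of $p$-variation, the diagonal argument for geometricity, passing the coordinate $t$ to the limit) is sound, and so is your definiteness argument via the starting point.

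The separability step of your direct route, however, fails as written, for two reasons. First, piecewise-linear paths are not $1$-variation dense in bounded-variation paths. Their $1$-variation closure consists only of absolutely continuous paths, so a Cantor-function component cannot be approximated, and Definition \ref{def:geom_p_rp} allows arbitrary bounded-variation approximants. For $p>1$ you can repair this by dropping $1$-variation convergence. Piecewise-linear interpolations of $x_n$ converge uniformly with $1$-variation at most $\lVert x_n\rVert_{1\textup{-var}}$. Hence their step-$\lfloor p\rfloor$ lifts converge uniformly with bounded $1$-variation, and interpolation upgrades this to $p$-variation convergence. At $p=1$ no repair exists. With bounded-variation approximants, $\widehat{G\Omega}_1$ contains all time-augmented continuous bounded-variation paths under the $1$-variation metric, and this space is not separable: distribution functions of mutually singular atomless probability measures are pairwise at $1$-variation distance $2$. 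So the $p=1$ case of the statement needs smooth or piecewise-linear approximants in the definition.

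Second, forcing the approximants' first component to be exactly $t$ does not give a dense set. The space as defined constrains only the first level, and for $p>2$ the time--space area is free. Consider the $p$-variation limit of the lifts of $\bigl(t+n^{-1}(\cos(n^2t)-1),\,n^{-1}\sin(n^2t),0,\dots,0\bigr)$. It lies in $\widehat{G\Omega}_p$, has first level $(t,0,\dots,0)$, and has time--space L\'evy area of absolute value $(t-s)/2$ on $[s,t]$. By contrast, any limit of lifts of paths $(t,\bar x_n)$ has cross term $\int_s^t(u-s)\,d\bar x_u=(t-s)\bar x_{s,t}-\int_s^t\bar x_{s,u}\,du$. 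This depends continuously on the first level and therefore vanishes here.

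The simplest fix is that any subset of a separable metric space is separable. You then only need separability of the ambient space $C^{0,p\textup{-var}}_o([0,T],G^{\lfloor p\rfloor}(\mathbb{R}^{k+1}))$, which is exactly what the cited result provides. If you instead intend $\widehat{G\Omega}_p$ to contain only canonical lifts of $(t,x)$, your dense set is fine. In that case, though, completeness must also check that these cross terms pass to the limit, which the same formula gives.
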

\begin{proof}
    For a complete proof of the first part of the statement, see, e.g., Proposition 8.25 in \cite{Friz_Victoir_2010} or Proposition 39 in \cite{Chevyrev2022}. The second part is a direct consequence of Theorem \ref{thm:Wasserstein_Polish}.
\end{proof}

In what follows, to avoid cumbersome notation, the subscript $p$-var will always denote the inhomogeneous $p$-variation norm or distance.

To begin our analysis, we establish an a priori estimate for the solution of the signature McKean--Vlasov SDE \eqref{eq:sig_MKV_SDE}.
\begin{proposition}\label{prop:a_priori_estimate}
    Let $p\in[5/2,3)$. Under the assumption of Hypothesis \ref{hypothesis_A}, let $X$ be a solution to the signature McKean--Vlasov SDE \eqref{eq:sig_MKV_SDE}. It holds that
    \begin{equation*}
        \mathbb{E}\left[\lVert \augpr{X}\rVert_{\textup{$p$-var};[0,T]}\right] <\infty.
    \end{equation*}
\end{proposition}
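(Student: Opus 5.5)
The key observation is that, whatever the solution $X$ is, the coefficients evaluated along it are bounded by Hypothesis \ref{hypothesis_A}. So the signature term is just a deterministic (or at least bounded, adapted) input, and $X$ is a semimartingale $X_t=\eta+A_t+M_t$. Here $A_t=\int_0^t \beta_s\,ds$ with $|\beta_s|\le M_b$, and $M_t=\int_0^t \Sigma_s\,dW_s$ with $\lVert\Sigma_s\rVert_F\le M_\sigma$. I would therefore bound the inhomogeneous $p$-variation of the time-augmented lift $\augpr{X}$ level by level, using the explicit form \eqref{eq:time_augmente_sig}. The norm is a maximum over $m=1,2$ of $p/m$-variations. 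Since $\mathbb{E}[\max]\le\sum\mathbb{E}$, it suffices to bound separately the expectation of each block of the first and second levels.

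\emph{First level.} The time component has $p$-variation at most $T$. For $X$ itself I split $X_{s,t}=A_{s,t}+M_{s,t}$. Since $|A_{s,t}|\le M_b|t-s|$, the drift contributes at most $M_bT$. For the martingale part I would use the Burkholder--Davis--Gundy type estimate for $p$-variation of continuous martingales (e.g.\ \cite[Chapter 14]{Friz_Victoir_2010}):
\begin{equation*}
\mathbb{E}\big[\lVert M\rVert_{p\text{-var};[0,T]}^q\big]\le C_{p,q}\,\mathbb{E}\big[\langle M\rangle_T^{q/2}\big],\qquad p>2 .
\end{equation*}
Here $\langle M\rangle_T\le M_\sigma^2T$, so this term is finite.

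\emph{Second level.} The entries are $\int_s^t(u-s)\,du$, $\int_s^t(u-s)\circ dX_u$, $\int_s^t (X_u-X_s)\,du$ and $\int_s^t (X_u-X_s)\otimes\circ dX_u$, and each needs finite $p/2$-variation in expectation.
\begin{itemize}
\item[$\cdot$] The first entry is $O(|t-s|^2)$, which is trivial.
\item[$\cdot$] The mixed entries are Young-type integrals of a bounded-variation path against $X$ (or vice versa). They are bounded by $C\,\omega_1(s,t)^{1/p}\,|t-s|$, where $\omega_1(s,t)=\lVert X\rVert^p_{p\text{-var};[s,t]}$. Since $1/p+1\ge 2/p$, their $p/2$-variation is controlled by $T\,\lVert X\rVert_{p\text{-var}}$, which has finite expectation by the first-level step.
\item[$\cdot$] The genuinely stochastic entry is the Stratonovich area of $X$. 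Writing it as an Itô integral plus $\tfrac12\langle X\rangle_{s,t}$, the bracket term has bounded variation with total mass at most $\tfrac12 M_\sigma^2T$. The cross terms involving $A$ are again Young integrals, handled as above.
\end{itemize}

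\emph{Main obstacle.} The remaining piece, which I expect to be the main difficulty, is the $p/2$-variation of the iterated Itô integral $\int_s^t M_{s,u}\otimes dM_u$. I would handle it with the Friz--Victoir result \cite[Theorem 14.12 / Section 14.2]{Friz_Victoir_2010}: for a continuous local martingale, the Stratonovich lift $\mathbf{M}$ satisfies a BDG inequality in homogeneous $p$-variation, $\mathbb{E}\lVert\mathbf{M}\rVert^q_{\text{hom }p\text{-var}}\le C\,\mathbb{E}\langle M\rangle_T^{q/2}$. This gives $\mathbb{E}\lVert \mathbf{M}^{(2)}\rVert_{p/2\text{-var}}^{q/2}<\infty$, which I would convert from the homogeneous to the inhomogeneous norm using the standard equivalence $\lVert\cdot\rVert_{\text{inhom}}\le C(\lVert\cdot\rVert_{\text{hom}}+\lVert\cdot\rVert_{\text{hom}}^{2})$.

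\emph{Assembling.} Finally I combine the levels, which needs some care. Because of Chen's relation, the increments of the lift of $X=\eta+A+M$ mix the components. I would handle this by viewing $\augpr{X}$ as the Young-pairing/translation of $\mathbf{M}$ by the bounded-variation path $(t,A_t)$ and invoking the continuity estimate for translations of rough paths \cite[Chapter 9]{Friz_Victoir_2010}: $\lVert T_h\mathbf{M}\rVert\le C(\lVert\mathbf{M}\rVert+\lVert h\rVert_{1\text{-var}})$. This estimate is linear in the homogeneous norms. Converting back to the inhomogeneous norm introduces at most a square, whose expectation is finite because the BDG bound holds for all $q$, and $\lVert h\rVert_{1\text{-var}}\le T+M_bT$. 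The restriction $p\ge5/2$ is not essential here; any $p\in(2,3)$ suffices, and it presumably only matters later in the paper.
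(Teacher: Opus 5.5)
Your argument is correct, but it controls the second level by a different mechanism than the paper. For the first level you and the paper do the same thing. You bound $\mathbb{E}[\max]$ by the sum over levels, the drift by Jensen, and the martingale part by L\'epingle/BDG in $p$-variation.

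For the second level the paper splits $\augpr{X}^{(2)}$ into four blocks and converts the Stratonovich area to It\^o form. It then bounds each stochastic piece by hand, such as $\int_{t_i}^{t_{i+1}}(u-t_i)\sigma_u\,dW_u$ and $\int_{t_i}^{t_{i+1}}(\int_{t_i}^u\sigma_r\,dW_r)^\top\sigma_u\,dW_u$. The tools are the Besov--variation embedding with $q=3/2$ and $1/\alpha=p/2$, then Jensen (which needs $p-q\geq 1$), BDG, and an explicit integrability check on the kernel. That is exactly where $p\in[5/2,3)$ is used, so the restriction already matters in this proof, not only later.

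You instead isolate the martingale L\'evy area and invoke the enhanced BDG inequality of Friz--Victoir. Everything involving the bounded-variation part $(t,A_t)$ you handle by Riemann--Stieltjes/Young bounds. Summing $\omega_1(t_i,t_{i+1})^{1/2}|t_{i+1}-t_i|^{p/2}$ with Cauchy--Schwarz indeed gives $T\lVert X\rVert_{\textup{$p$-var}}$.

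Your route gains validity for every $p\in(2,3)$ and a cleaner treatment of the two-parameter second-level quantities. The embedding as stated in the paper concerns increments $d(x_u,x_v)$ of a path. Applying it to second-level blocks therefore strictly requires passing through $(G^2(\mathbb{R}^{d+1}),d_{cc})$, which is what the enhanced BDG packages for you. The paper's route is more elementary and self-contained, with explicit constants in $T,M_b,M_\sigma$.

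Two streamlining remarks. First, the It\^o detour is unnecessary: the Stratonovich area of $X$ decomposes directly into $\mathbf{M}^{(2)}$ plus Young cross terms with $A$. Otherwise you must add back $\tfrac12\langle M\rangle$ when passing from the It\^o iterated integral to $\mathbf{M}^{(2)}$. Second, the final translation step is not needed. For each $(s,t)$, $\augpr{X}^{(2)}_{s,t}=\int_s^t\widehat{X}_{s,u}\otimes\circ d\widehat{X}_u$ splits exactly into your pieces, and the $p/2$-variation functional is a seminorm since $p/2\geq 1$. That said, the translation estimate alone is a valid one-line alternative proof of the whole statement.
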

\begin{proof}
    See Appendix \ref{appendix:prop:a_priori_estimate}.
\end{proof}
\medskip

In analogy with the proof of well-posedness for the path-dependent McKean--Vlasov SDE \eqref{eq:path_dep_MKV}, fixed a measure $\mu \in \mathcal{P}^p\left(\widehat{G\Omega}_p(\mathbb{R}^{d+1})\right)$, we consider the auxiliary equation
\begin{equation}\label{eq:sig_MKV_SDE_fixed_mu}
    X^\mu_t = \eta + \int_0^t b\left(s,X^{\mu}_s,\Big\langle \ell,\mathbb{E}_{z\sim\mu}\left[S(z)_s\right]\Big\rangle\right)ds + \int_0^t \sigma\left(s,X^{\mu}_s,\Big\langle \ell,\mathbb{E}_{z\sim\mu}\left[S(z)_s\right]\Big\rangle\right)dW_s,
\end{equation}
with $t\in[0,T]$. By the Lipschitz continuity of the coefficients, \eqref{eq:sig_MKV_SDE_fixed_mu} admits a pathwise unique strong solution.

Given $\mu,\nu \in \mathcal{P}(C^d)$, let $X^{\mu}$ and $Y^{\nu}$ denote the corresponding solutions to \eqref{eq:sig_MKV_SDE_fixed_mu}, and let
\begin{equation*}
    \augpr{X}^{\mu}=\left(1,\augpr{X}^{1,\mu},\augpr{X}^{2,\mu}\right),\qquad \augpr{Y}^{\nu}=\left(1,\augpr{Y}^{1,\nu},\augpr{Y}^{2,\nu}\right)
\end{equation*}
be their respective time-augmented geometric $p$-rough path lifts. The following estimate controls the $p$-variation of the first-level increments of the lifted paths.
\begin{proposition}\label{prop:p_var_norm_estimate_m=1}
    Let $p\in(2,3)$. For every $t\in[0,T]$,
    \begin{equation*}
        \begin{aligned}
            &\mathbb{E}\left[\sup_D\sum_{i=0}^{n-1} \Big\lVert \augpr{X}^{\mu,1}_{t_i,t_{i+1}}-\augpr{Y}^{\nu,1}_{t_i,t_{i+1}} \Big\rVert^p_{\mathbb{R}^{d+1}}\right]\\
            &=\mathbb{E}\left[\sup_D\sum_{i=0}^{n-1} \Big\lVert\left(X^{\mu}_{t_{i+1}}-X^{\mu}_{t_i}\right)-\left(Y^{\nu}_{t_{i+1}}-Y^{\nu}_{t_i}\right)\Big\rVert^p_{\mathbb{R}^d}\right]\\
            &\leq \widetilde{C}\int_0^t\left(\mathbb{E}\left[\big\lVert\augpr{X}^{\mu}-\augpr{Y}^{\nu}\big\rVert_{\textup{$p$-var};[0,s]}^p\right] + \int_{\widehat{G\Omega}_p(\mathbb{R}^{d+1})\times \widehat{G\Omega}_p(\mathbb{R}^{d+1})}C_s\lVert x-y \rVert_{\textup{$p$-var};[0,s]}^p\pi(dx,dy)\right)ds,
        \end{aligned}
    \end{equation*}
    where $D=D_{[0,t]}$ is a partition of $[0,t]$, $\widetilde{C}=\widetilde{C}(p,T,L)$, $C_s=C_s(d,p,N,T,\alpha_\infty,\Gamma_s)$, $\alpha_\infty:=\max_{0\leq |I|\leq N}|\alpha_I|$ where $\alpha_I$ are the coefficients in \eqref{eq:linear_comb_in_sig_MKV_SDE}, $\Gamma_s:=\max\{\lVert x\rVert_{\textup{$p$-var};[0,s]},\lVert y\rVert_{\textup{$p$-var};[0,s]}\}$, and $\pi\in\Pi(\mu,\nu)$ is an arbitrary coupling of $\mu$ and $\nu$.
\end{proposition}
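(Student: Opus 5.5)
The equality in the statement is immediate: the time component of $\augpr{X}^{\mu,1}_{t_i,t_{i+1}}$ and $\augpr{Y}^{\nu,1}_{t_i,t_{i+1}}$ is $t_{i+1}-t_i$ in both cases, so it cancels, and only the $\mathbb{R}^d$-increments remain. For the inequality, set $Z:=X^\mu-Y^\nu$ and write it as $Z_t=\int_0^t \Delta b_s\,ds+\int_0^t\Delta\sigma_s\,dW_s$. Here $\Delta b_s$ and $\Delta\sigma_s$ are the differences of the coefficients evaluated at $(s,X^\mu_s,\langle\ell,\mathbb{E}_\mu[S(z)_s]\rangle)$ and at $(s,Y^\nu_s,\langle\ell,\mathbb{E}_\nu[S(z)_s]\rangle)$. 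The increments of $Z$ then split into a bounded-variation part and a martingale part, and $\sup_D\sum_i\|Z_{t_i,t_{i+1}}\|^p\le 2^{p-1}(\|A\|^p_{p\text{-var};[0,t]}+\|M\|^p_{p\text{-var};[0,t]})$, where $A=\int\Delta b\,ds$ and $M=\int\Delta\sigma\,dW$.

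\textbf{Drift part.} Since $p\ge1$, the $p$-variation is dominated by the $1$-variation. By H\"older this gives $\|A\|_{p\text{-var};[0,t]}^p\le(\int_0^t\|\Delta b_s\|ds)^p\le t^{p-1}\int_0^t\|\Delta b_s\|^pds$.

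\textbf{Martingale part.} I would invoke the BDG inequality for the $p$-variation of martingales, valid for $p>2$ (Lépingle's inequality; cf.\ \cite[Chapter 14]{Friz_Victoir_2010}): $\mathbb{E}\|M\|^p_{p\text{-var};[0,t]}\le C_p\mathbb{E}[\langle M\rangle_t^{p/2}]$. Then Jensen gives $\langle M\rangle_t^{p/2}\le t^{p/2-1}\int_0^t\|\Delta\sigma_s\|_F^pds$. This is the key step; the obstacle is citing the correct $p$-variation BDG. If a clean citation is unavailable, I would fall back to the Kolmogorov/Garsia--Rodemich--Rumsey route. That route bounds the $p$-variation via a Hölder norm of exponent $<1/2$, using moment bounds $\mathbb{E}\|M_{s,r}\|^q\lesssim|r-s|^{q/2-1}\mathbb{E}\int_s^r\|\Delta\sigma\|^q$, but it requires more moments.

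\textbf{Lipschitz step.} By Hypothesis \ref{hypothesis_A}, $\|\Delta b_s\|^p+\|\Delta\sigma_s\|_F^p\le 2^{p}L^p\big(\|Z_s\|^p+|\langle\ell,\mathbb{E}_\mu S(z)_s-\mathbb{E}_\nu S(z)_s\rangle|^p\big)$. Two bounds close the argument.
\begin{enumerate}[i)]
\item Since $Z_0=0$, $\|Z_s\|\le\|\augpr{X}^\mu-\augpr{Y}^\nu\|_{p\text{-var};[0,s]}$, because the level-one increment over $[0,s]$ is one term of the $m=1$ sum in the inhomogeneous distance.
\item For the signature term, rewrite the difference of expectations via the coupling $\pi$: $\mathbb{E}_\mu S(x)_s-\mathbb{E}_\nu S(y)_s=\int(S(x)_s-S(y)_s)\pi(dx,dy)$. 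Then $|\langle\ell,\cdot\rangle|\le\alpha_\infty c(d,N)\sum_{m\le N}\|S^m(x)_s-S^m(y)_s\|$, and by Corollary \ref{corollary:local_Lip} this is $\le C(p,N,\Gamma_s)\,d_{p\text{-var};[0,s]}(x,y)$. Jensen applied to $\pi$ (a probability measure) lets the $p$-th power pass inside the integral, producing $\int C_s\|x-y\|^p_{p\text{-var};[0,s]}\pi(dx,dy)$ with $C_s$ depending on $(d,p,N,T,\alpha_\infty,\Gamma_s)$.
\end{enumerate}

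Collecting the constants, which depend only on $p,T,L$, into $\widetilde C$ and integrating over $s\in[0,t]$ yields the claimed estimate.
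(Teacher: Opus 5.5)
Your proposal is correct and follows essentially the same route as the paper. You split the increment of $Z=X^\mu-Y^\nu$ into a drift part and a stochastic-integral part, bound the drift by Jensen/H\"older and the martingale part by L\'epingle's inequality combined with BDG and Jensen, and then handle the signature term via the Lipschitz hypothesis, the coupling $\pi$, the coefficient bound on $\langle\ell,\cdot\rangle$ and Corollary~\ref{corollary:local_Lip}; your explicit justification of $\|Z_s\|\le\|\augpr{X}^\mu-\augpr{Y}^\nu\|_{\textup{$p$-var};[0,s]}$ via the trivial partition and $Z_0=0$ makes precise a step the paper leaves implicit.
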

\begin{proof}
    See Appendix \ref{appendix:prop:p_var_norm_estimate_m=1_PROOF}.
\end{proof}

Next, the $p$-variation of the second-level increments of the lifted paths is controlled by the following estimate.
\begin{proposition}\label{prop:p_var_norm_estimate_m=2}
    Let $p\in[5/2,3)$. For every $t\in[0,T]$,
    \begin{align*}
        &\mathbb{E}\left[\sup_D\left(\sum_{i=0}^{n-1} \Big\lVert\left(\augpr{X}^{\mu,2}_{t_{i+1}}-\augpr{X}^{\mu,2}_{t_i}\right)-\left(\augpr{Y}^{\nu,2}_{t_{i+1}}-\augpr{Y}^{\nu,2}_{t_i}\right)\Big\rVert^{p/2}_{(\mathbb{R}^{d+1})^{\otimes 2}}\right)^2\right]\\
        &\leq \widetilde{C}\int_0^t\left(\mathbb{E}\left[\big\lVert\augpr{X}^{\mu}-\augpr{Y}^{\nu}\big\rVert_{\textup{$p$-var};[0,s]}^p\right] + \int_{\widehat{G\Omega}_p(\mathbb{R}^{d+1})\times \widehat{G\Omega}_p(\mathbb{R}^{d+1})}C_s\lVert x-y \rVert_{\textup{$p$-var};[0,s]}^p\pi(dx,dy)\right)ds,,
    \end{align*}
    where $D=D_{[0,t]}$ is a partition of $[0,t]$, $\widetilde{C}=\widetilde{C}(p,T,L,M_b,M_\sigma)$, $C_s=C_s(d,p,N,T,\alpha_\infty,\Gamma_s)$, $\alpha_\infty:=\max_{0\leq |I|\leq N}|\alpha_I|$ where $\alpha_I$ are the coefficients in \eqref{eq:linear_comb_in_sig_MKV_SDE}, $\Gamma_s:=\max\{\lVert x\rVert_{\textup{$p$-var};[0,s]},\lVert y\rVert_{\textup{$p$-var};[0,s]}\}$, and $\pi\in\Pi(\mu,\nu)$ is an arbitrary coupling of $\mu$ and $\nu$.
\end{proposition}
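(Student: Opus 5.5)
We decompose the second level of the time-augmented lift, \eqref{eq:time_augmente_sig}, into its four blocks and estimate each block's $p/2$-variation of increments separately. We use the elementary bound $(\sum_{k=1}^4 a_k)^{2}\le 4\sum_k a_k^2$ after applying $\|\cdot\|^{p/2}$ subadditivity up to constants. Write $\Delta X:=X^\mu-Y^\nu$ and $\beta^\mu_s:=b(s,X^\mu_s,\langle\ell,\mathbb{E}_{z\sim\mu}[S(z)_s]\rangle)$, and similarly $\sigma^\mu_s,\beta^\nu_s,\sigma^\nu_s$. The increment $\augpr{X}^{\mu,2}_{t_{i+1}}-\augpr{X}^{\mu,2}_{t_i}$ is not the rough-path increment, so we rewrite it with Chen's relation as $\augpr{X}^{\mu,2}_{t_i,t_{i+1}}+\augpr{X}^{\mu,1}_{t_i}\otimes\augpr{X}^{\mu,1}_{t_i,t_{i+1}}$. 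The first piece is a genuine two-parameter iterated integral. The second is a cross term bounded by $\|\augpr{X}^{\mu,1}\|_{\infty}$ times first-level increments, which we control by Proposition \ref{prop:p_var_norm_estimate_m=1} together with the boundedness of $b$ and $\sigma$; this is where $M_b,M_\sigma$ enter $\widetilde C$. The $(1,1)$ block is deterministic and cancels in the difference.

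The $(1,2)$ and $(2,1)$ blocks are mixed integrals $\int_{t_i}^{t_{i+1}}(u-t_i)\circ d\Delta X_u$ and $\int_{t_i}^{t_{i+1}}(\Delta X_u-\Delta X_{t_i})du$, which are Young integrals against a $C^1$ path. Their increments are bounded by $|t_{i+1}-t_i|\sup_{[t_i,t_{i+1}]}|\Delta X_u-\Delta X_{t_i}|$. Using Hölder over the partition and $p/2\ge 5/4>1$, the $p/2$-variation squared is controlled by $T^{\cdot}\|\Delta X\|^p_{p\text{-var};[0,t]}$. We then estimate $\|\Delta X\|_{p\text{-var}}$ exactly as in Proposition \ref{prop:p_var_norm_estimate_m=1}: write $\Delta X$ as $\int(\beta^\mu-\beta^\nu)ds+\int(\sigma^\mu-\sigma^\nu)dW$, use the Lipschitz bounds, and bound the measure argument via Corollary \ref{corollary:local_Lip}, which gives $|\langle\ell,\mathbb{E}_\mu S_s-\mathbb{E}_\nu S_s\rangle|\le \int C_s\|x-y\|_{p\text{-var};[0,s]}\pi(dx,dy)$ for any coupling $\pi$.

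The main block is $\int_{t_i}^{t_{i+1}}(X^\mu_u-X^\mu_{t_i})\otimes\circ dX^\mu_u-(\text{same for }Y^\nu)$. We convert Stratonovich to Itô: the correction is $\frac12\int(\sigma^\mu\sigma^{\mu\top}-\sigma^\nu\sigma^{\nu\top})du$, which is Lipschitz in the differences by boundedness of $\sigma$ and is handled like a drift term. We then split $A\otimes dA-B\otimes dB=(A-B)\otimes dA+B\otimes d(A-B)$. Each piece is a local-martingale-plus-drift two-parameter process. The key tool, and what I expect to be the main obstacle, is a BDG-type $p/2$-variation estimate for such two-parameter stochastic integrals, uniform over partitions with the supremum inside the expectation. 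I would try the approach used for the a priori bound, Proposition \ref{prop:a_priori_estimate}: first apply a Garsia–Rodemich–Rumsey/Kolmogorov-type inequality (Friz–Victoir, Ch.~14 style), bounding the $q$-variation for $q=p/2>1$ by a Besov-type double integral $\int\!\!\int |Z_{s,t}|^{r}/|t-s|^{\cdot}$. This is where $p\ge 5/2$ is needed, to allow a suitable exponent with $1/q<1/2-\epsilon$ room. Second, take expectations and apply BDG to $Z_{s,t}$ at fixed $(s,t)$, using the boundedness of $\sigma^\mu,\sigma^\nu$ and the Lipschitz bounds. This yields $\mathbb{E}|Z_{s,t}|^r\lesssim |t-s|^{\cdot}\,\mathbb{E}\sup_{[0,t]}|\Delta X|^{\cdot}$ plus the measure term.

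Finally, we bound $\mathbb{E}\sup|\Delta X|^p\le \mathbb{E}\|\augpr{X}^\mu-\augpr{Y}^\nu\|^p_{p\text{-var};[0,s]}$, since both start at $\eta$. We collect all blocks, integrate the time-dependent estimates into $\int_0^t(\cdots)ds$ as in the proof of Theorem \ref{eq:well_posed_path_dep_MKV}, and obtain the stated inequality with $\widetilde C(p,T,L,M_b,M_\sigma)$ and $C_s$ inherited from Corollary \ref{corollary:local_Lip}.
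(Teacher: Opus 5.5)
The step that fails is your Chen-relation detour. You read the left-hand side literally, with one-parameter increments $\augpr{X}^{\mu,2}_{t_{i+1}}-\augpr{X}^{\mu,2}_{t_i}$. You are then left with the difference of cross terms
$\augpr{X}^{\mu,1}_{t_i}\otimes\augpr{X}^{\mu,1}_{t_i,t_{i+1}}-\augpr{Y}^{\nu,1}_{t_i}\otimes\augpr{Y}^{\nu,1}_{t_i,t_{i+1}}=(0,\Delta X_{t_i})\otimes\augpr{X}^{\mu,1}_{t_i,t_{i+1}}+\augpr{Y}^{\nu,1}_{t_i}\otimes(0,\Delta X_{t_{i+1}}-\Delta X_{t_i})$, with $\Delta X=X^\mu-Y^\nu$.

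Its $p/2$-variation involves sums such as $\sum_i\lVert\Delta X_{t_i}\rVert^{p/2}\lVert X^\mu_{t_{i+1}}-X^\mu_{t_i}\rVert^{p/2}$ and $\sum_i t_i^{p/2}\lVert\Delta X_{t_{i+1}}-\Delta X_{t_i}\rVert^{p/2}$. These are $p/2$-variation sums of first-level increments with $p/2<2$, so they are in general a.s.\ infinite once the martingale parts are non-degenerate. Proposition~\ref{prop:p_var_norm_estimate_m=1} controls $\sum_i\lVert\cdot\rVert^{p}$, not $\sum_i\lVert\cdot\rVert^{p/2}$. Moreover, $\lVert\augpr{X}^{\mu,1}\rVert_\infty$ is random and unbounded, so it cannot be absorbed into $\widetilde C(p,T,L,M_b,M_\sigma)$ either.

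No sharper bound rescues this route, because under the one-parameter reading the left-hand side is in general $+\infty$ while the right-hand side is finite. For example, the time--space block of $t\mapsto\augpr{X}^{\mu,2}_t-\augpr{Y}^{\nu,2}_t$ is $\int_0^t u\,d\Delta X_u$. This is a semimartingale with quadratic variation $\int_0^t u^2\lVert\sigma^\mu_u-\sigma^\nu_u\rVert_F^2\,du$, which is non-zero as soon as $\sigma^\mu\neq\sigma^\nu$ on a set of positive $du\otimes d\mathbb{P}$-measure.

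The statement must therefore be read with the rough-path increments $\augpr{X}^{\mu,2}_{t_i,t_{i+1}}-\augpr{Y}^{\nu,2}_{t_i,t_{i+1}}$. That is what $d_{\textup{inhom $p$-var}}$ in Definition~\ref{def:inhom_p_var_norm} uses, what Theorem~\ref{thm:well_posed_sig_MKV} needs, and what the paper's proof actually estimates. With the Chen step deleted, the rest of your plan is essentially the paper's proof:
the block decomposition with the $(1,1)$ entry cancelling;
the integration-by-parts/Young bound for the mixed blocks followed by Proposition~\ref{prop:p_var_norm_estimate_m=1};
It\^{o} conversion;
the split $(\Delta X_u-\Delta X_{t_i})\circ dX^\mu_u+(Y^\nu_u-Y^\nu_{t_i})\circ d\Delta X_u$;
the Besov--variation embedding with $q=3/2$ followed by BDG.

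One caution on the second piece of that split. Your claim that BDG and boundedness of $\sigma$ give a bound linear in $\mathbb{E}\sup\lVert\Delta X\rVert^r$ is right for $\int(\Delta X_u-\Delta X_s)\circ dX^\mu_u$, where the bounded $\sigma^\mu$ multiplies the difference. It is not right for $\int(Y^\nu_u-Y^\nu_s)\circ d\Delta X_u$. There BDG leaves $\mathbb{E}\big[\sup_u\lVert Y^\nu_u-Y^\nu_s\rVert^r\big(\int_s^t\lVert\sigma^\mu_u-\sigma^\nu_u\rVert_F^2\,du\big)^{r/2}\big]$, a product of two dependent unbounded factors. You therefore need H\"older's inequality together with a higher-moment Gronwall bound for $\Delta X$ in terms of the deterministic measure discrepancy, and the outcome is not directly of the linear form stated. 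The paper handles this term (its $D$) only by saying it follows from ``the same arguments'' as $C_{1,2,2}$. So this is a point both arguments leave open, not one where you diverge from the paper.
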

\begin{proof}
    See Appendix \ref{appendix:prop:p_var_norm_estimate_m=2_PROOF}.
\end{proof}

We are now ready to proceed with the proof of strong existence and uniqueness for a solution of \eqref{eq:sig_MKV_SDE}.

\subsection{Existence and uniqueness result}
The notions of strong unique solution and weak solution unique in law, as in Definition \ref{def:strong}, Definition \ref{def:strong_unique}, Definition \ref{def:weak}, and Definition \ref{def:law_unique}, apply unchanged to the signature McKean--Vlasov SDE \eqref{eq:sig_MKV_SDE}. Consequently, we obtain the following well-posedness result for equation \eqref{eq:sig_MKV_SDE}.

\begin{theorem}\label{thm:well_posed_sig_MKV}
    Under the assumption of Hypothesis \ref{hypothesis_A}, the signature McKean--Vlasov SDE \eqref{eq:sig_MKV_SDE} admits a strong unique solution as in Definition \ref{def:strong} and Definition \ref{def:strong_unique}. Moreover, it admits a weak solution unique in the sense of probability law as in Definition \ref{def:weak} and Definition \ref{def:law_unique}.
\end{theorem}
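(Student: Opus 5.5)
We follow the Sznitman fixed-point scheme of Theorem \ref{eq:well_posed_path_dep_MKV}, now on the rough-path space. We define
\begin{equation*}
    \Phi:\mathcal{P}^p\left(\widehat{G\Omega}_p(\mathbb{R}^{d+1})\right)\rightarrow\mathcal{P}^p\left(\widehat{G\Omega}_p(\mathbb{R}^{d+1})\right),\qquad \Phi(\mu):=\mathcal{L}(\augpr{X}^{\mu}),
\end{equation*}
where $X^\mu$ is the pathwise unique strong solution of \eqref{eq:sig_MKV_SDE_fixed_mu} and $\augpr{X}^\mu$ is its time-augmented geometric $p$-rough lift (Theorem \ref{thm:geometric_p_rough_path_of_semimartingale}), with $p\in[5/2,3)$ fixed.

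\textbf{Step 1: $\Phi$ is well defined.} The input $s\mapsto\langle\ell,\mathbb{E}_{z\sim\mu}[S(z)_s]\rangle$ is a deterministic function of $s$. Since $b,\sigma$ are bounded and Lipschitz, the argument of Proposition \ref{prop:a_priori_estimate} applies to $X^\mu$ and yields $\mathbb{E}[\lVert\augpr{X}^\mu\rVert^p_{p\text{-var};[0,T]}]<\infty$. That argument uses BDG-type $p$-variation bounds for the martingale part and its Stratonovich area, which is where $p\ge 5/2$ enters. Hence $\Phi(\mu)\in\mathcal{P}^p$.

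\textbf{Step 2: contraction estimate.} Take $\mu,\nu$ and a coupling $\pi\in\Pi(\mu,\nu)$. By Definition \ref{def:inhom_p_var_norm}, $d_{p\text{-var};[0,t]}(\augpr{X}^\mu,\augpr{Y}^\nu)^p$ is bounded by the sum of the level-one quantity and the square of the level-two quantity appearing in Propositions \ref{prop:p_var_norm_estimate_m=1} and \ref{prop:p_var_norm_estimate_m=2}. Adding these two propositions gives
\begin{equation*}
    \mathbb{E}\left[\lVert\augpr{X}^\mu-\augpr{Y}^\nu\rVert^p_{p\text{-var};[0,t]}\right]\le \widetilde{C}\int_0^t\left(\mathbb{E}\left[\lVert\augpr{X}^\mu-\augpr{Y}^\nu\rVert^p_{p\text{-var};[0,s]}\right]+\int C_s\lVert x-y\rVert^p_{p\text{-var};[0,s]}\,\pi(dx,dy)\right)ds.
\end{equation*}
Gronwall's lemma removes the first term. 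Taking the infimum over $\pi$ and using that $(\augpr{X}^\mu,\augpr{Y}^\nu)$ is itself a coupling of $\Phi(\mu),\Phi(\nu)$, we aim for
\begin{equation*}
    [\overline{\mathcal{W}}^p_t(\Phi(\mu),\Phi(\nu))]^p\le K\int_0^t[\overline{\mathcal{W}}^p_s(\mu,\nu)]^p\,ds .
\end{equation*}

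\textbf{Main obstacle.} The constant $C_s$ depends on $\Gamma_s$, i.e.\ on the $p$-variation norms of $x,y$, through the local Lipschitz constant of the signature in Corollary \ref{corollary:local_Lip}. So $C_s$ cannot simply be pulled out of the coupling integral, and the estimate above is not yet closed. I would first try to restrict the iteration to a set of measures with uniformly bounded $p$-variation moments. The crucial point is that the a priori bound of Step 1 does not depend on $\mu$, because $b,\sigma$ are bounded. On the resulting class $\mathcal{K}$, invariant under $\Phi$, one can either localize (split on $\{\Gamma_s\le R\}$ and control the complement via Markov's inequality) or use H\"older's inequality together with the polynomial growth of $C_s$ in $\Gamma_s$. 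Either route should close the inequality, possibly with a modified Wasserstein-type functional; if exponents mismatch, I would fall back to a localized contraction on $\{\Gamma\le R\}$ followed by $R\to\infty$.

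\textbf{Step 3: conclusion.} Given the contraction inequality, iterate exactly as in Theorem \ref{eq:well_posed_path_dep_MKV}: the iterates $\Phi^k(m)$ satisfy a $K^kT^k/k!$ bound, so they form a Cauchy sequence in the Polish space $(\mathcal{P}^p(\widehat{G\Omega}_p),\overline{\mathcal{W}}^p_T)$. Their limit $\bar\mu$ is a fixed point, and $X^{\bar\mu}$ solves \eqref{eq:sig_MKV_SDE}. For uniqueness in law, two solutions give two fixed points, and the same Gronwall argument forces them to coincide; the a priori estimate ensures their laws lie in $\mathcal{K}$. Pathwise uniqueness follows because two solutions with the same law solve the same SDE \eqref{eq:sig_MKV_SDE_fixed_mu} with Lipschitz coefficients. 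Yamada--Watanabe then yields a strong solution.
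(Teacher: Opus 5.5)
Your Step~2 is where the whole proof lives, and there you list options instead of giving an argument. As written, the proposal therefore has a gap, and one of the options you list does not work.

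Splitting on $\{\Gamma_s\le R\}$ and bounding the complement by Markov's inequality leaves an additive remainder of the size of $\int_{\{\Gamma_s>R\}}(|S(x)_s|+|S(y)_s|)\,d\pi$. This remainder does not vanish as $\overline{\mathcal W}^p_s(\mu,\nu)\to0$. The resulting inequality is therefore not a contraction, and it gives neither uniqueness of the fixed point nor summability of $\overline{\mathcal W}^p_T(\Phi^{k+1}m,\Phi^k m)$.

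The H\"older option does close, in $\overline{\mathcal W}^p$ itself and with no modified functional, but only if you apply H\"older \emph{before} raising to the $p$-th power. Let $L(\Gamma)\le K(1+\Gamma)^r$ denote the local Lipschitz constant of $S^{\le N}$ from Corollary~\ref{corollary:local_Lip}, and let $p'=p/(p-1)$. Then for every coupling $\pi$,
\begin{equation*}
    \Big|\Big\langle\ell,\mathbb E_{z\sim\mu}[S(z)_s]-\mathbb E_{z\sim\nu}[S(z)_s]\Big\rangle\Big|\le K'\Big(\int(1+\Gamma_s)^{rp'}d\pi\Big)^{1/p'}\Big(\int\lVert x-y\rVert^p_{\textup{$p$-var};[0,s]}\,d\pi\Big)^{1/p}.
\end{equation*}
Moreover, $\int(1+\Gamma_s)^{rp'}d\pi\le\int(1+\lVert x\rVert_{\textup{$p$-var};[0,T]})^{rp'}\mu(dx)+\int(1+\lVert y\rVert_{\textup{$p$-var};[0,T]})^{rp'}\nu(dy)$, which depends only on the marginals. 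On a moment class $\mathcal K$ this is a constant. You can then take the infimum over $\pi$ and insert $K_{\mathcal K}[\overline{\mathcal W}^p_s(\mu,\nu)]^p$ into the proofs of Propositions~\ref{prop:p_var_norm_estimate_m=1} and~\ref{prop:p_var_norm_estimate_m=2} in place of $\int C_s\lVert x-y\rVert^p\,d\pi$. The exponent mismatch you anticipate comes from applying H\"older to $\int C_s\lVert x-y\rVert^p\,d\pi$, the form in which those propositions are stated.

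For this to be a proof, three further points are needed.
\begin{itemize}
\item $\mathcal K$ must be built into the definition of $\Phi$. For $N>p$, $\mathbb E_{z\sim\mu}[S^N(z)_s]$ need not exist when $\mu\in\mathcal P^p$, so $\Phi$ is not defined on all of $\mathcal P^p$. Take $\mathcal K=\{\mu:\int\lVert z\rVert^q_{\textup{$p$-var};[0,T]}\,d\mu\le M\}$ with $q\ge\max\{N,rp',p\}$.
\item Invariance of $\mathcal K$ needs an a priori bound of order $q$, uniform in $\mu$. Your Step~1 gives only order $p$, and Proposition~\ref{prop:a_priori_estimate} gives only the first moment. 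The order-$q$ bound does hold: $\augpr{X}^\mu$ involves only increments, so $\eta$ drops out, and $b,\sigma$ are bounded; one can use BDG-type bounds for enhanced martingales.
\item $\mathcal K$ is closed in $\overline{\mathcal W}^p_T$, by lower semicontinuity of $\mu\mapsto\int\lVert z\rVert^q\,d\mu$ under weak convergence. So the Picard limit stays in $\mathcal K$ and is a fixed point.
\end{itemize}
With these, your uniqueness arguments go through as you describe.

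The paper avoids all of this by truncation, which is your fallback. It replaces $\mathbb E[S(\augpr X)_s]$ by $\mathbb E[S(\pi_n(\augpr X))_s]$, where $\pi_n$ is a dilation onto the $p$-variation ball of radius $n$. Then $\Gamma_s\le n$ and $C_s$ is a constant. It runs the contraction for each fixed $n$, then removes the truncation with stopping times $\rho_n$ and Proposition~\ref{prop:a_priori_estimate}.

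Truncation buys a uniform Lipschitz constant and a map defined on all of $\mathcal P^p$ using only first moments. Its last step, however, is delicate for a mean-field equation. The truncation sits inside $\mathbb E$, so stopping the $n$-th solution at the random time $\rho_n$ does not make it solve the untruncated equation on $[0,\rho_n]$. Also, different $n$ carry different deterministic inputs, so pathwise uniqueness does not force the solutions to agree. Repairing that step requires uniform-in-$n$ moment control of $\augpr{X}^n$, which is the same ingredient your global route, completed as above, uses directly.
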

\begin{proof}
    Throughout the proof, it is assumed $p\in[5/2,3)$.
    
    \textbf{Step 1.} Because of the local Lipschitz continuity of the signature map (see Corollary \ref{corollary:local_Lip}), we need to start proving existence and uniqueness in the case the time-augmented $p$-rough path lift of the process is projected onto a ball of radius $n\in\mathbb{N}$.
    
    Fixed $n\in\mathbb{N}$, let $B_n(0)$ be the centered ball of radius $n$ in $L^p\left(\Omega,\mathcal{F},\mathbb{P};\widehat{G\Omega}_p(\mathbb{R}^{d+1})\right)$  and $\pi_n$ be the projection onto $B_n(0)$. Specifically,
    \begin{equation*}
        \pi_n:L^p\left(\Omega,\mathcal{F},\mathbb{P};\widehat{G\Omega}_p(\mathbb{R}^{d+1})\right)\rightarrow B_n(0);\quad \pi_n(\augpr{X}_\cdot):=\left(1,\lambda_n \augpr{X}_\cdot^{(1)},\lambda_n^2 \augpr{X}_\cdot^{(2)}\right),
    \end{equation*}
    where
    \begin{equation*}
        \lambda_n := \frac{n}{\lVert \augpr{X}\rVert_{\textup{$p$-var};[0,\cdot]}}\wedge 1.
    \end{equation*}
    
    The map $\pi_n$ is 1-Lipschitz continuous with respect to the inhomogeneous $p$-variation distance and we define the push-forward measure of $\pi_n$ over $\mathcal{P}^p\left(\widehat{G\Omega}_p(\mathbb{R}^{d+1})\right)$ as the map
    \begin{equation*}
        \pi_{n\#}\cdot:\mathcal{P}^p\left(\widehat{G\Omega}_p(\mathbb{R}^{d+1})\right)\mapsto\mathcal{P}^p\left(\widehat{G\Omega}_p(\mathbb{R}^{d+1})\right),\quad \pi_{n\#}\mu:=\mathcal{L}(\pi_n(\augpr{X})),
    \end{equation*}
    where $\augpr{X}\in L^p\left(\Omega,\mathcal{F},\mathbb{P};G\Omega_p(\mathbb{R}^{d+1}\right)$ and $\mathcal{L}(\augpr{X})=\mu$.

    The map $\pi_{n\#}$ is well-defined. In fact, whenever $\augpr{X},\augpr{Y}\in L^p\left(\Omega,\mathcal{F},\mathbb{P};\widehat{G\Omega}_p(\mathbb{R}^{d+1})\right)$ have the same law, then $\mathcal{L}(\pi_n(\augpr{X}))=\mathcal{L}(\pi_n(\augpr{Y}))$.
    
    Moreover, $\pi_{n\#}$ inherits the 1-Lipschitz continuity of $\pi_n$. Indeed, let $\mu,\nu\in\mathcal{P}^p\left(\widehat{G\Omega}_p(\mathbb{R}^{d+1})\right)$ and choose $\augpr{X},\augpr{Y}\in L^p\left(\Omega,\mathcal{F},\mathbb{P};\widehat{G\Omega}_p(\mathbb{R}^{d+1})\right)$ with laws $\mu$ and $\nu$, respectively. Then, for every $t\in[0,T]$,
    \begin{equation*}
        \left[\overline{\mathcal{W}}^p_t(\pi_{n\#}\mu,\pi_{n\#}\nu)\right]^p \leq \mathbb{E}\left[\lVert \pi_n(\augpr{X})-\pi_n(\augpr{Y})\rVert_{\textup{$p$-var};[0,t]}^p\right] \leq \mathbb{E}\left[\lVert \augpr{X}-\augpr{Y}\rVert_{\textup{$p$-var};[0,t]}^p\right].
    \end{equation*}
    Taking the infimum over the space of couplings of $\mu$ and $\nu$, namely $\Pi(\mu,\nu)$, on both sides of the above inequality,
    \begin{equation}\label{eq:Lipschitz_cont_map_pi_n}
        \left[\overline{\mathcal{W}}^p_t(\pi_{n\#}\mu,\pi_{n\#}\nu)\right]^p \leq \left[\overline{\mathcal{W}}^p_t(\mu,\nu)\right]^p.
    \end{equation}

    As an auxiliary step to prove well-posedness for the signature McKean--Vlasov equation, we introduce the following McKean--Vlasov-type equation
    \begin{equation}\label{eq:aux_truncated_MKV}
        X^n_t = \eta + \int_0^t b^n\left(s,X^n_s,\Big\langle \ell,\mathbb{E}\left[S(\augpr{X}^n)_s\right]\Big\rangle\right)ds + \int_0^t \sigma^n\left(s,X^n_s,\Big\langle \ell,\mathbb{E}\left[S(\augpr{X}^n)_s\right]\Big\rangle\right)dW_s,
    \end{equation}
    with $t\in[0,T]$,
    \begin{equation*}
        b^n\left(t,X^n_t,\Big\langle \ell,\mathbb{E}\left[S(\augpr{X}^n)_t\right]\Big\rangle\right):=b\left(t,X_t^n,\Big\langle \ell,\mathbb{E}\left[S(\pi_n(\augpr{X}^n))_t\right]\Big\rangle\right),
    \end{equation*}
    and $\sigma^n$ defined similarly.

    Fixed $\mu\in\mathcal{P}^p\left(\widehat{G\Omega}_p(\mathbb{R}^{d+1})\right)$, there exists a pathwise unique strong solution to the following equation
    \begin{equation}\label{eq:truncated_sig_MKV_SDE_ball_fixed_mu}
        X_t^{n,\mu} = \eta + \int_0^t b^n\left(s,X_s^{n,\mu},\Big\langle\ell,\mathbb{E}_{z\sim\mu}\left[S(z)_s\right]\Big\rangle\right)ds + \int_0^t \sigma^n\left(s,X_s^{n,\mu},\Big\langle\ell,\mathbb{E}_{z\sim\mu}\left[S(z)_s\right]\Big\rangle\right)dW_s,
    \end{equation}
    with $t\in[0,T]$. In fact, the drift and diffusion coefficients $b^n$ and $\sigma^n$ are Lipschitz continuous:
    \begin{align*}
        &\begin{aligned}
            &\Big\lVert b^n\left(t,x,\Big\langle\ell,\mathbb{E}_{z\sim\mu}\left[S(z)_t\right]\Big\rangle\right)-b^n\left(t,y,\Big\langle\ell,\mathbb{E}_{z\sim\mu}\left[S(z)_t\right]\Big\rangle\right)\Big\rVert_{\mathbb{R}^d}\\
            &+\Big\lVert \sigma^n\left(t,x,\Big\langle\ell,\mathbb{E}_{z\sim\mu}\left[S(z)_t\right]\Big\rangle\right)-\sigma^n\left(t,y,\Big\langle\ell,\mathbb{E}_{z\sim\mu}\left[S(z)_t\right]\Big\rangle\right)\Big\rVert_{\mathbb{R}^d}
        \end{aligned}\\
        &\begin{aligned}
            =&\Big\lVert b\left(t,x,\Big\langle\ell,\mathbb{E}_{z\sim\pi_{n\#}\mu}\left[S(z)_t\right]\Big\rangle\right)-b\left(t,y,\Big\langle\ell,\mathbb{E}_{z\sim\pi_{n\#}\mu}\left[S(z)_t\right]\Big\rangle\right)\Big\rVert_{\mathbb{R}^d}\\
            &+\Big\lVert \sigma\left(t,x,\Big\langle\ell,\mathbb{E}_{z\sim\pi_{n\#}\mu}\left[S(z)_t\right]\Big\rangle\right)-\sigma\left(t,y,\Big\langle\ell,\mathbb{E}_{z\sim\pi_{n\#}\mu}\left[S(z)_t\right]\Big\rangle\right)\Big\rVert_{\mathbb{R}^d}
        \end{aligned}\\
        &\leq L\lVert x-y\rVert_{\mathbb{R}^d},
    \end{align*}
    where the first inequality is due to the Lipschitz continuity of the drift term (see Hypothesis \ref{hypothesis_A}), while the second one is obtained by the 1-Lipschitz continuity of the map $\pi_n$. The same arguments apply for $\sigma^n$.
    
    Let us consider the map
    \begin{equation}\label{eq:map_Psi_truncated_MKV}
        \Psi:\mathcal{P}^p(\widehat{G\Omega}_p(\mathbb{R}^{d+1}))\rightarrow\mathcal{P}^p(\widehat{G\Omega}_p(\mathbb{R}^{d+1})),\qquad \Psi(\mu)=\mathcal{L}(\augpr{X}^{n,\mu}).
    \end{equation}
    
    It is straightforward to see $\Psi$ is well-defined on $\mathcal{P}^p(\widehat{G\Omega}_p(\mathbb{R}^{d+1}))$. Indeed, by the same arguments as in Proposition \ref{prop:a_priori_estimate}, we obtain
    \begin{equation*}
        \mathbb{E}\left[\lVert \augpr{X}^{n,\mu}\rVert_{\textup{$p$-var};[0,T]}\right] <\infty,
    \end{equation*}
    for every $\mu\in\mathcal{P}^p(\widehat{G\Omega}_p(\mathbb{R}^{d+1}))$.
    
    Similarly to the case of the path-dependent McKean--Vlasov SDE \eqref{eq:path_dep_MKV}, by proving that $\Psi$ admits a unique fixed point, we obtain that the auxiliary McKean--Vlasov SDE \eqref{eq:aux_truncated_MKV} admits a weak solution. Again, as a byproduct of the computations, we obtain that also uniqueness in the sense of probability law and pathwise uniqueness holds for equation \eqref{eq:aux_truncated_MKV}. Then, by Yamada and Watanabe's result we conclude that the SDE \eqref{eq:aux_truncated_MKV} also admits a strong solution. This is done in Step 2. In Step 3, we remove the constraint of bounded $p$-variation.
    
    \textbf{Step 2.} 
    Let $\mu,\nu\in\mathcal{P}^p\left(\widehat{G\Omega}_p(\mathbb{R}^{d+1})\right)$. By \eqref{eq:map_Psi_truncated_MKV}, $\Psi(\mu)$ denotes the law of the $p$-rough path lift of the solution to the SDE
    \begin{equation*}
        X_t^{n,\mu} = \eta + \int_0^t b^n\left(s,X_s^{n,\mu},\Big\langle\ell,\mathbb{E}_{z\sim\mu}\left[S(z)_s\right]\Big\rangle\right)ds + \int_0^t \sigma^n\left(s,X_s^{n,\mu},\Big\langle\ell,\mathbb{E}_{z\sim\mu}\left[S(z)_s\right]\Big\rangle\right)dW_s
    \end{equation*}
    and $\Psi(\nu)$ denotes the law of the $p$-rough path lift of the solution to the SDE
    \begin{equation*}
        Y_t^{n,\nu} = \eta + \int_0^t b^n\left(s,Y_s^{n,\nu},\Big\langle\ell,\mathbb{E}_{z\sim\nu}\left[S(z)_s\right]\Big\rangle\right)ds + \int_0^t \sigma^n\left(s,Y_s^{n,\nu},\Big\langle\ell,\mathbb{E}_{z\sim\nu}\left[S(z)_s\right]\Big\rangle\right)dW_s
    \end{equation*}
    for every $t\in[0,T]$.
    
    By Proposition \ref{prop:p_var_norm_estimate_m=1} and Proposition \ref{prop:p_var_norm_estimate_m=2},
    \begin{align*}
        &\mathbb{E}\left[\Big\lVert \augpr{X}^{\mu,n}-\augpr{Y}^{\nu,n}\Big\rVert^p_{\textup{$p$-var};[0,t]}\right]&&\\
        &\leq C\left(\int_0^t\mathbb{E}\left[\lVert \augpr{X}^{\mu,n}-\augpr{Y}^{\nu,n}\rVert_{\textup{$p$-var};[0,s]}^p\right]ds + \int_0^t \int_{\widehat{G\Omega}_p(\mathbb{R}^{d+1})\times \widehat{G\Omega}_p(\mathbb{R}^{d+1})}C_s\lVert x-y \rVert_{\textup{$p$-var};[0,s]}^p\pi(dx,dy)ds\right),
    \end{align*}
    where $C=C(d,n,p,N,T,\alpha_\infty,L,M_b,M_\sigma)$ with $\alpha_\infty$ as in Proposition \ref{prop:p_var_norm_estimate_m=1}, and $\pi\in\Pi({\pi_n}_{\#}\mu,{\pi_n}_{\#}\nu)$ is an arbitrary coupling of ${\pi_n}_{\#}\mu$ and ${\pi_n}_{\#}\nu$.

    By taking the infimum over the space $\Pi(\mu,\nu)$ of couplings of $\mu$ and $\nu$ on both sides of the previous inequality,
    \begin{equation*}
        \begin{aligned}
            &\mathbb{E}\left[\Big\lVert \augpr{X}^{\mu,n}-\augpr{Y}^{\nu,n}\Big\rVert^p_{\textup{$p$-var};[0,t]}\right]\\
            &\leq C\left(\int_0^t\mathbb{E}\left[\lVert \augpr{X}^{\mu,n}-\augpr{Y}^{\nu,n}\rVert_{\textup{$p$-var};[0,s]}^p\right]ds + \int_0^t \left[\overline{\mathcal{W}}^p_s(\pi_{n\#}\mu,\pi_{n\#}\nu)\right]^pds\right).
        \end{aligned}
    \end{equation*}

    Therefore, by Gronwall's lemma,
    \begin{equation}\label{eq:well_posed_res1}
        \mathbb{E}\left[\Big\lVert \augpr{X}^{\mu,n}-\augpr{Y}^{\nu,n}\Big\rVert^p_{\textup{$p$-var};[0,s]}\right]\leq Ce^{CT}\int_0^t \left[\overline{\mathcal{W}}^p_s(\pi_{n\#}\mu,\pi_{n\#}\nu)\right]^pds\leq Ce^{CT}\int_0^t \left[\overline{\mathcal{W}}^p_s(\mu,\nu)\right]^pds,,
    \end{equation}
    where the last inequality is due to \eqref{eq:Lipschitz_cont_map_pi_n}.

    Finally, we obtain
    \begin{equation}\label{eq:well_posed_res2}
        \begin{aligned}
            \left[\overline{\mathcal{W}}^p_t\left(\Psi(\mu),\Psi(\nu)\right)\right]^p &\leq \mathbb{E}\left[\Big\lVert \augpr{X}^{\mu,n}-\augpr{Y}^{\nu,n}\Big\rVert^p_{\textup{$p$-var};[0,s]}\right]\\
            &\leq Ce^{CT}\int_0^t \left[\overline{\mathcal{W}}^p_s(\mu,\nu)\right]^pds,
        \end{aligned}
    \end{equation}
    where the first inequality is because we are selecting a specific coupling, the second one is due to \eqref{eq:well_posed_res1}.

    Assume we have two weak solutions $X^{\mu,n}$ and $Y^{\nu,n}$ such that $\mathcal{L}(X^{\mu,n})=\mu$ and $\mathcal{L}(Y^{\nu,n})=\nu$. Then, $\Psi(\mu)=\mu$ and $\Psi(\nu)=\nu$ and by \eqref{eq:well_posed_res2},
    \begin{equation*}
       \left[\overline{\mathcal{W}}_T^p(\mu,\nu)\right]^p=\left[\overline{\mathcal{W}}_T^p(\Psi(\mu),\Psi(\nu))\right]^p\leq Ce^{CT}\int_0^T \left[\overline{\mathcal{W}}^p_s(\mu,\nu)\right]^pds.
    \end{equation*}
    
    By Gronwall's lemma,
    \begin{equation}\label{eq:weak_uniq_truncated_MKV}
        \overline{\mathcal{W}}_T^p(\mu,\nu)=0.
    \end{equation}

    Assume there exist two strong solutions $X^{\mu,n}$ and $Y^{\nu,n}$. By substituting \eqref{eq:weak_uniq_truncated_MKV} in \eqref{eq:well_posed_res1}, we obtain that for every $t\in[0,T]$,
    \begin{align}
        \mathbb{E}\left[\big\lVert X_t^{n,\mu}-Y_t^{n,\nu}\big\rVert_{\mathbb{R}^d}^p\right]&=\mathbb{E}\left[\big\lVert \left(X_t^{n,\mu}-Y_t^{n,\nu}\right)-\left(X_0^{n,\mu}-Y_0^{n,\nu}\right)\big\rVert_{\mathbb{R}^d}^p\right]\label{eq:pathwise_unique_proof_1}\\
        &\leq \mathbb{E}\left[\sup_D\sum_{i=0}^{n-1}\Big\lVert\left(X_{t_{i+1}}^{n,\mu}-X_{t_i}^{n,\mu}\right)-\left(Y_{t_{i+1}}^{n,\nu}-Y_{t_i}^{n,\nu}\right)\Big\rVert_{\mathbb{R}^d}^p\right]\label{eq:pathwise_unique_proof_2}\\
        &\leq \mathbb{E}\left[\Big\lVert \augpr{X}^{\mu,n}-\augpr{Y}^{\nu,n}\Big\rVert^p_{\textup{$p$-var};[0,T]}\right] = 0\notag,
    \end{align}
    where \eqref{eq:pathwise_unique_proof_1} is due to the fact that $X_0^{n,\mu}=Y_0^{n,\nu}=\eta$ and \eqref{eq:pathwise_unique_proof_2} is because we have picked as partition $D=D_{[0,t]}$ of $[0,t]$ the trivial partition $D=\{0,t\}$. Therefore, for every $t\in[0,T]$, $X_t^{n,\mu}=Y_t^{n,\nu}$. By the continuity of the processes $X^{n,\mu}$ and $Y^{n,\nu}$, this implies pathwise uniqueness for SDE \eqref{eq:aux_truncated_MKV}.
   
    Now, fix a measure $m\in \mathcal{P}^p\left(\widehat{G\Omega}_p(\mathbb{R}^{d+1})\right)$. By iteratively applying the map $\Psi$  to the measure $m$, we define a sequence of measures $\{m_k\}_{k\in \mathbb{N}}$ in $\mathcal{P}^p\left(\widehat{G\Omega}_p(\mathbb{R}^{d+1})\right)$  such that
    \begin{equation}\label{eq:def_m_k_ex_uni_SDE_MKV_aux}
        m_k\,:=\,  \Psi^k(m) = \Psi\left(\Psi^{k-1}(m)  \right).
    \end{equation} 
    
    The sequence $\{m_k\}_{k\in\mathbb{N}}$ is a Cauchy sequence in $\left(\mathcal{P}^p\left(\widehat{G\Omega}_p(\mathbb{R}^{d+1})\right), \overline{\mathcal{W}_T^p})\right)$. Indeed, by iterating \eqref{eq:well_posed_res2}, 
    \begin{equation*}
         \left[\overline{\mathcal{W}}^p_T\left( \Psi^{k+1}(m),\Psi^k(m) \right)\right]^p  \leq \frac{C^ke^{kCT}}{k!} \left[\overline{\mathcal{W}}_T^p\left( \Psi(m),m \right)\right]^p\xrightarrow{k\rightarrow \infty}0.
    \end{equation*}
 
    By Theorem \ref{thm:equivalence_Wasserstein_weak_conv}, there exists a probability measure $\bar{\mu}\in\mathcal{P}^p\left(\widehat{G\Omega}_p(\mathbb{R}^{d+1})\right)$ such that $m_k$ weakly converges to  $\bar{\mu}$ as $k$ increases to $\infty$, and from \eqref{eq:def_m_k_ex_uni_SDE_MKV_aux} we get $\bar{\mu}=\Psi(\bar{\mu})=\mathcal{L}(\augpr{X}^{n,\bar{\mu}})$. This concludes the proof of existence of a weak solution to the auxiliary McKean--Vlasov SDE \eqref{eq:aux_truncated_MKV}. By Yamada and Watanabe's theorem \cite[p. 308]{KaratzasShreve}, equation \eqref{eq:aux_truncated_MKV} admits a strong unique solution and a weak solution unique in the sense of probability law.

    \textbf{Step 3.} Finally, we take the limit $n\rightarrow\infty$.
    
    Let $Y^n$ be the stochastic process defined as $Y^n:=X_{\cdot\wedge\rho_n}^n$, where
    \begin{equation*}
        \rho_n := \inf_{t\geq 0}\Big\{\lVert \augpr{X}\rVert_{\textup{$p$-var};[0,t]} > n\Big\}.
    \end{equation*}
    By the previous step, $Y^n$ is a strong solution to the signature McKean--Vlasov SDE \eqref{eq:sig_MKV_SDE} on the interval $[0,\rho_n]$. Moreover, since $Y^n$ is pathwise unique, we have that $Y^n=Y^{n^{\prime}}$ on $[0,\rho_{n^{\prime}}]$ and $\rho_{n^{\prime}}\leq\rho_n$ whenever $n\geq n^{\prime}$. This allows us to define the process $X$ by $X_t=Y^n_t$ for $t\in[0,\rho_n]$. Clearly, $X$ solves the signature McKean--Vlasov SDE \eqref{eq:sig_MKV_SDE} up to $\rho\wedge T$, where $\rho=\lim_{n\rightarrow\infty}\rho_n$. Following the argument in Proposition A.2 in \cite{hambly2023control}, we now show that $\rho=\infty$ almost surely.

    By the a priori estimate in Proposition \ref{prop:a_priori_estimate}, for any $t\in[0,T]$,
    \begin{equation*}
        \mathbb{E}\left[\lVert \augpr{X}\rVert_{\textup{$p$-var};[0,t\wedge\rho]}\right]\leq\mathbb{E}\left[\lVert \augpr{X}\rVert_{\textup{$p$-var};[0,T]}\right] <\infty,
    \end{equation*}

    Assume $\mathbb{P}\left(\rho<\infty\right)>0$. Then, we can find a large enough $t\geq 0$ such that $q = \mathbb{P}\left(\rho\leq t\right) >0$. Further, we can choose $n\geq 1$ such that $qn > \mathbb{E}\left[\lVert \augpr{X}\rVert_{\textup{$p$-var};[0,t\wedge\rho]}\right]$. This leads to a contradiction, in fact
    \begin{equation*}
        qn > \mathbb{E}\left[\lVert \augpr{X}\rVert_{\textup{$p$-var};[0,t\wedge\rho]}\right]\geq\mathbb{E}\left[\mathbbm{1}_{\rho_n\leq t} \lVert \augpr{X}\rVert_{\textup{$p$-var};[0,\rho_n]}\right]\geq \mathbb{P}\left(\rho_n\leq t\right)n \geq \mathbb{P}\left(\rho\leq t\right)n = qn.
    \end{equation*}
\end{proof}

Finally, let us remark that by the same arguments as above, Theorem \ref{thm:well_posed_sig_MKV} can be easily extended to consider signature McKean--Vlasov SDE \eqref{eq:sig_MKV_SDE} with two different linear combinations of signature terms in the drift and in the diffusion coefficient; that is,
\begin{equation*}
    X_t = \eta + \int_0^t b\left(s,X_s,\Big\langle \ell,\mathbb{E}\left[S(\augpr{X})_s\right]\Big\rangle\right)ds + \int_0^t\sigma\left(s,X_s,\Big\langle \widetilde{\ell},\mathbb{E}\left[S(\augpr{X})_s\right]\Big\rangle\right)dW_s,
\end{equation*}
with $t\in[0,T]$.

\section{Approximation result}\label{sec:approximation_result}
In this section, we prove how signature McKean--Vlasov SDEs \eqref{eq:sig_MKV_SDE} can approximate the solution of the path-dependent McKean--Vlasov SDEs introduced in Section \ref{sec:well_posed_path_dep_MKV}. We have the following global universal approximation result.
\begin{theorem}\label{thm:approx_MKV_SDE}
    Fixed $\varepsilon>0$, let $\mathcal{G}$ and $\widetilde{\mathcal{G}}$ be two functionals as in Hypothesis \ref{hypothesis_B} also continuous over $\widehat{C}^{\alpha,p}_o$, and $\langle\ell,\cdot\rangle$ and $\langle\widetilde{\ell},\cdot\rangle$ be as in Theorem \ref{thm:CuchieroTeichmannGUAT} such that
    \begin{equation*}
        \sup_{t\in[0,T]}\sup_{\widehat{\mathbf{x}}\in \widehat{C}^{\alpha,p}_o} \frac{ \big|\mathcal{G}(\mathbf{x}^{(1)}_{\cdot\wedge t}) -\langle\ell,S(\widehat{\mathbf{x}})_t\rangle\big| }{ \psi(\widehat{\mathbf{x}}_{\cdot\wedge t}) } < \varepsilon;\qquad \sup_{t\in[0,T]}\sup_{\widehat{\mathbf{x}}\in \widehat{C}^{\alpha,p}_o} \frac{ \big|\widetilde{\mathcal{G}}(\mathbf{x}^{(1)}_{\cdot\wedge t}) -\langle\widetilde{\ell},S(\widehat{\mathbf{x}})_t\rangle\big| }{ \psi(\widehat{\mathbf{x}}_{\cdot\wedge t}) } < \widetilde{\varepsilon}.
    \end{equation*}
    
    Under the assumption of Hypothesis \ref{hypothesis_A}, let us consider the processes $(X,Y)$ being solutions to the equations
    \begin{equation*}
        \begin{aligned}
            &X_t = \eta + \int_0^t b\left(s,X_s,\Big\langle \ell,\mathbb{E}\left[S(\augpr{X})_s\right]\Big\rangle\right)ds
            + \int_0^t\sigma\left(s,X_s,\Big\langle \widetilde{\ell},\mathbb{E}\left[S(\augpr{X})_s\right]\Big\rangle\right)dW_s;\\
            &Y_t = \eta + \int_0^t b\left(s,Y_s,\mathbb{E}\left[\mathcal{G}(Y_{\cdot\wedge s})\right]\right)ds 
            + \int_0^t\sigma\left(s,Y_s,\mathbb{E}\left[\widetilde{\mathcal{G}}(Y_{\cdot\wedge s})\right]\right)dW_s,
        \end{aligned}
    \end{equation*}
    with $t\in[0,T]$. Then there exists $\delta=\delta(\varepsilon)>0$ such that
    \begin{equation*}
        \mathbb{E}\left[\big\lVert Y - X\big\rVert^2_{\infty,T}\right]\leq \delta,
    \end{equation*}
    where
    \begin{equation*}
        \delta=\delta(\varepsilon):= 2TLe^{4TL(1+2(L_\mathcal{G}\vee L_{\widetilde{\mathcal{G}}}))T}e^{2\beta(2T)^\gamma}\left(\varepsilon\vee\widetilde{\varepsilon}\right)^2
    \end{equation*}
\end{theorem}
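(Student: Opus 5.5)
We compare the two solutions pathwise on the common probability space, driven by the same $\eta$ and $W$, and close the estimate with Gronwall's lemma. Set $\Delta_t:=\mathbb{E}[\lVert Y-X\rVert^2_{\infty,t}]$. As in the proof of Theorem \ref{eq:well_posed_path_dep_MKV}, apply the Cauchy--Schwarz inequality to the drift integral and the Burkholder--Davis--Gundy inequality to the stochastic integral. Then Hypothesis \ref{hypothesis_A} gives
\begin{equation*}
    \Delta_t\leq C(T,L)\int_0^t\Big(\Delta_s+\big|\mathbb{E}[\mathcal{G}(Y_{\cdot\wedge s})]-\langle\ell,\mathbb{E}[S(\augpr{X})_s]\rangle\big|^2+\big|\mathbb{E}[\widetilde{\mathcal{G}}(Y_{\cdot\wedge s})]-\langle\widetilde{\ell},\mathbb{E}[S(\augpr{X})_s]\rangle\big|^2\Big)ds.
\end{equation*}

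The key step is to split each mean-field discrepancy through the intermediate quantity $\mathbb{E}[\mathcal{G}(X_{\cdot\wedge s})]$:
\begin{equation*}
    \big|\mathbb{E}[\mathcal{G}(Y_{\cdot\wedge s})]-\mathbb{E}[\mathcal{G}(X_{\cdot\wedge s})]\big|+\big|\mathbb{E}[\mathcal{G}(X_{\cdot\wedge s})-\langle\ell,S(\augpr{X})_s\rangle]\big|.
\end{equation*}
Here linearity of $\langle\ell,\cdot\rangle$ allows the expectation to pass inside. By Hypothesis \ref{hypothesis_B} and Jensen's inequality, the first term is bounded by $L_{\mathcal{G}}\Delta_s^{1/2}$, and its square feeds back into the Gronwall term; this is why $L_\mathcal{G}\vee L_{\widetilde{\mathcal{G}}}$ appears in the exponent of $\delta$.

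For the second term we use the approximation hypothesis, namely Theorem \ref{thm:CuchieroTeichmannGUAT} applied pathwise with $\widehat{\mathbf{x}}=\augpr{X}(\omega)$, whose first level is $(t,X_t)$. This gives
\begin{equation*}
    \big|\mathcal{G}(X_{\cdot\wedge s})-\langle\ell,S(\augpr{X})_s\rangle\big|\leq\varepsilon\,\psi(\augpr{X}_{\cdot\wedge s})\leq\varepsilon\,\psi(\augpr{X}),
\end{equation*}
where the last inequality holds because stopping does not increase the $p$-variation or H\"older norms. Squaring and taking expectations reduces this term to $\varepsilon^2\mathbb{E}[\psi(\augpr{X})]^2$, and similarly with $\widetilde{\varepsilon}$ for the diffusion functional.

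The main obstacle is justifying this pathwise step and bounding $\mathbb{E}[\psi(\augpr{X})]$. First, we must ensure $\augpr{X}\in\widehat{C}^{\alpha,p}_o$ almost surely for suitable $\alpha<1/2$ with $p\alpha<1$; for the lift of a semimartingale with bounded coefficients this follows from the Brownian-type H\"older regularity of It\^o integrals together with Theorem \ref{thm:geometric_p_rough_path_of_semimartingale}. Second, we need the exponential moment $\mathbb{E}[\exp(\beta(\lVert\augpr{X}\rVert_{p\text{-var}}+\lVert\augpr{X}\rVert_{\alpha\text{-H\"ol}})^\gamma)]$ to be finite. The stated form of $\delta$, with factor $e^{2\beta(2T)^\gamma}$, suggests controlling $\psi$ by a deterministic bound of the form $\psi\leq e^{\beta(2T)^\gamma}$ on the relevant set. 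I would first try to obtain this from the boundedness of $b$ and $\sigma$, by normalising the norms or restricting $\beta$. Failing that, I would use Gaussian-type tail estimates for the $p$-variation of the lift of a diffusion with bounded coefficients (Fernique-type estimates from \cite[Ch. 14]{Friz_Victoir_2010}), which give integrability for $\gamma<2$ or small $\beta$; in that case I would absorb the resulting constant into the same factor.

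With these bounds in hand, $\Delta_t\leq c_1\int_0^t\Delta_s\,ds+c_2T\,e^{2\beta(2T)^\gamma}(\varepsilon\vee\widetilde{\varepsilon})^2$. Gronwall's lemma on $[0,T]$ then yields $\Delta_T\leq\delta(\varepsilon)$ with $\delta$ of the stated form, and $\delta(\varepsilon)\to0$ as $\varepsilon,\widetilde{\varepsilon}\to0$.
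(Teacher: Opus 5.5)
Your outline is the paper's own: Cauchy--Schwarz and Burkholder--Davis--Gundy with Hypothesis~\ref{hypothesis_A}, a split through $\mathcal{G}(X_{\cdot\wedge s})$, the weighted bound applied to $\augpr{X}(\omega)$, then Gronwall. The genuine gap is the step you call the main obstacle and leave open. Neither of your fallbacks produces the factor $e^{2\beta(2T)^\gamma}$.

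\textbf{No deterministic bound.} There is no deterministic bound on $\psi(\augpr{X})$ once $\sigma$ is non-degenerate. The trivial partition gives $\lVert\augpr{X}\rVert_{\textup{hom $p$-var};[0,T]}\geq\lVert X_T-\eta\rVert_{\mathbb{R}^d}$, which is unbounded (e.g.\ $b=0$, $\sigma$ constant), whatever normalisation or $\beta$ you choose. Even the time component alone gives $\psi(\augpr{X})\geq\exp(\beta(T+T^{1-\alpha})^\gamma)$, which already exceeds $e^{\beta(2T)^\gamma}$ when $T<1$.

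\textbf{The Fernique route.} At best this gives $\mathbb{E}[\psi(\augpr{X})]<\infty$, and only in a narrow regime. For non-degenerate $\sigma$, $\augpr{X}\in\widehat{C}^{\alpha,p}_o$ forces $\alpha<1/2$, so Theorem~\ref{thm:CuchieroTeichmannGUAT} needs $\gamma\geq\lfloor1/\alpha\rfloor\geq2$; your case $\gamma<2$ is not available. For $\gamma>2$ the moment is infinite, since already $\mathbb{E}[e^{\beta\lvert W_T\rvert^\gamma}]=\infty$. For $\gamma=2$ you need $\beta$ below a threshold depending on $T,M_b,M_\sigma$. Even then the constant is $(\mathbb{E}[\psi(\augpr{X})])^2$, which depends on $M_b,M_\sigma,\alpha,p$ and cannot be ``absorbed'' into $e^{2\beta(2T)^\gamma}$.

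\textbf{What your argument actually gives.} It yields $\mathbb{E}[\lVert Y-X\rVert^2_{\infty,T}]\leq C(\varepsilon\vee\widetilde{\varepsilon})^2$ under the extra restriction $\gamma=2$, $\beta$ small. Here $C$ contains $(\mathbb{E}[\psi(\augpr{X})])^2$. This is uniform in $\ell$, because $b,\sigma$ are bounded independently of their third argument. Because you keep the expectation outside the absolute value, only a first moment of $\psi$ is needed. This is not the stated $\delta$. The paper does not supply the missing step either: it obtains $e^{\beta(2T)^\gamma}$ by asserting $\sup_{\widehat{\mathbf{x}}\in\widehat{C}^{\alpha,p}_o}\psi(\widehat{\mathbf{x}}_{\cdot\wedge t})=e^{\beta(2T)^\gamma}$, which is false since $\psi$ is unbounded on that space.

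\textbf{A second gap: the starting point.} Elements of $\widehat{C}^{\alpha,p}_o$ start at $(1,0,0)$, so the first level of $\augpr{X}$ is $(t,X_t-\eta)$, not $(t,X_t)$. The hypothesis therefore controls $\mathcal{G}(X_{\cdot\wedge s}-\eta)-\langle\ell,S(\augpr{X})_s\rangle$, not the term you need. This is not cosmetic. Take $d=m=1$, $\eta=1$, $\sigma\equiv1$, $b(t,x,y)=\phi(y)$ with $\phi$ bounded Lipschitz and $\phi(1)\neq\phi(0)$, and $\mathcal{G}(z)=\widetilde{\mathcal{G}}(z)=z_0$. Then $\mathcal{G}(\mathbf{x}^{(1)}_{\cdot\wedge t})=0$ identically, so $\ell=\widetilde{\ell}=0$ satisfies the hypothesis for every $\varepsilon$. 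Yet $Y_t-X_t=(\phi(1)-\phi(0))t$ does not vanish as $\varepsilon\to0$.

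For deterministic $\eta$, you must apply Theorem~\ref{thm:CuchieroTeichmannGUAT} to $z\mapsto\mathcal{G}(\eta+z)$ instead. For random $\eta$, the signature of the increments does not determine $\eta$, so $\eta$ would have to be added to the features. The paper makes the same silent identification of $\mathbf{x}^{(1)}$ with $X$.
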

\begin{proof}
    By Cauchy-Schwartz's and H\"{o}lder's inequalities,
    \begin{align*}
        &\mathbb{E}\left[\big\lVert Y - X\big\rVert^2_{\infty,T}\right]\\
        &\begin{aligned}
            \leq &2T\mathbb{E}\left[\int_0^T\Big\lVert b\left(t,Y_t,\mathbb{E}\left[\mathcal{G}(Y_{\cdot\wedge t})\right]\right) - b\left(t,X_t,\Big\langle \ell,\mathbb{E}\left[S(\augpr{X})_t\right]\Big\rangle\right)\Big\rVert^2dt\right]\\
            &+2\mathbb{E}\left[\Bigg\lVert \int_0^\cdot\left[\sigma\left(t,Y_t,\mathbb{E}\left[\widetilde{\mathcal{G}}(Y_{\cdot\wedge t})\right]\right) - \sigma\left(t,X_t,\Big\langle \widetilde{\ell},\mathbb{E}\left[S(\augpr{X})_t\right]\Big\rangle\right)\right]dW_t\Bigg\rVert^2_{\infty,T}\right]:=A+B.
        \end{aligned}
    \end{align*}

    As the term $A$ concerns, by the Lipschitz continuity assumption on the drift coefficient $b$ (see Hypothesis \ref{hypothesis_A}),
    \begin{equation*}
        \begin{aligned}
            A &\leq 2TL\mathbb{E}\left[\int_0^T\left(\big\lVert Y_t - X_t \big\rVert_{\mathbb{R}^d}^2 + \Big|\mathbb{E}\left[\mathcal{G}(Y_{\cdot\wedge t}) - \Big\langle \ell,S(\augpr{X})_t\Big\rangle\right]\Big|^2\right)dt\right]\\
            &\leq 2TL\int_0^T\mathbb{E}\left[\big\lVert Y - X \big\rVert_{\infty,t}^2\right]dt + 2TL\int_0^T\mathbb{E}\left[\Big|\mathcal{G}(Y_{\cdot\wedge t}) - \Big\langle \ell,S(\augpr{X})_t\Big\rangle\Big|^2\right]dt,
        \end{aligned}
    \end{equation*}
    where the second inequality is due to Tonelli's theorem and Jensen's inequality.

    Regarding the term $B$, by the usual Burkholder-Davis-Gundy inequality (see, e.g., \cite[Theorem 3.28]{KaratzasShreve}) and the Lipschitz continuity assumption on the diffusion coefficient $\sigma$ (see Hypothesis \ref{hypothesis_A}), we obtain the same bound as for the term $A$:
    \begin{equation*}
        B \leq 2TL\int_0^T\mathbb{E}\left[\big\lVert Y - X \big\rVert_{\infty,t}^2\right]dt + 2TL\int_0^T\mathbb{E}\left[\Big|\widetilde{\mathcal{G}}(Y_{\cdot\wedge t}) - \Big\langle \widetilde{\ell},S(\augpr{X})_t\Big\rangle\Big|^2\right]dt.
    \end{equation*}

    Now, since $\sup_{\widehat{\mathbf{x}}\in \widehat{C}^{\alpha,p}_o}1/\psi(\widehat{\mathbf{x}}) =1$, for every $t\in[0,T]$,
    \begin{equation*}
        \begin{aligned}
            &\Big|\mathcal{G}(Y_{\cdot\wedge t}) - \Big\langle \ell,S(\augpr{X})_t\Big\rangle\Big|\\
            &\leq \Big|\mathcal{G}(Y_{\cdot\wedge t}) - \mathcal{G}(X_{\cdot\wedge t})\Big| + \Big|\mathcal{G}(X_{\cdot\wedge t}) - \langle\ell,S(\augpr{X})_t\rangle\Big|\\
            &= L_\mathcal{G}\big\lVert Y-X\big\rVert_{\infty,t} + \Big|\mathcal{G}(X_{\cdot\wedge t}) - \langle\ell,S(\augpr{X})_t\rangle\Big|\frac{1}{\psi(\widehat{\mathbf{x}}_{\cdot\wedge t})}\psi(\widehat{\mathbf{x}}_{\cdot\wedge t})\\
            &\leq L_\mathcal{G}\big\lVert Y-X\big\rVert_{\infty,t} +  \sup_{\widehat{\mathbf{x}}\in \widehat{C}^{\alpha,p}_o} \frac{\Big|\mathcal{G}\left(\mathbf{x}^{(1)}_{\cdot\wedge t}\right)-\langle\ell,S(\widehat{\mathbf{x}})_t\rangle\Big|}{\psi(\widehat{\mathbf{x}}_{\cdot\wedge t})}\sup_{\widehat{\mathbf{x}}\in \widehat{C}^{\alpha,p}_o}\psi(\widehat{\mathbf{x}}_{\cdot\wedge t})\\
            &= L_\mathcal{G}\big\lVert Y-X\big\rVert_{\infty,t} + e^{\beta(2T)^\gamma}\varepsilon.
        \end{aligned}
    \end{equation*}

    Similarly,
    \begin{equation*}
        \Big|\widetilde{\mathcal{G}}(Y_{\cdot\wedge t}) - \Big\langle \widetilde{\ell},S(\augpr{X})_t\Big\rangle\Big|\leq L_{\widetilde{\mathcal{G}}}\big\lVert Y-X\big\rVert_{\infty,t} + e^{\beta(2T)^\gamma}\widetilde{\varepsilon}.
    \end{equation*}

    Therefore, 
    \begin{equation*}
        \mathbb{E}\left[\big\lVert Y - X\big\rVert^2_{\infty,T}\right]\leq 4TL(1+2(L_\mathcal{G}\vee L_{\widetilde{\mathcal{G}}}))\int_0^T\mathbb{E}\left[\Big\lVert Y - X \Big\rVert_{\infty,t}^2\right]dt + 2TLe^{2\beta(2T)^\gamma}(\varepsilon\vee\widetilde{\varepsilon})^2.
    \end{equation*}
    Finally, by Gronwall's lemma,
    \begin{equation*}
        \mathbb{E}\left[\big\lVert Y - X\big\rVert^2_{\infty,T}\right] \leq 2TLe^{2\beta(2T)^\gamma}(\varepsilon\vee\widetilde{\varepsilon})^2e^{4TL(1+2(L_\mathcal{G}\vee L_{\widetilde{\mathcal{G}}}))T}=\delta.
    \end{equation*}
\end{proof}

The previous result shows that signature McKean–Vlasov SDEs can approximate path-dependent McKean–Vlasov SDEs with arbitrary accuracy. In the next section, we show how signature McKean–Vlasov SDEs can in turn be approximated by their associated interacting particle system, which we refer to as \emph{signature mean-field particle system}, by establishing propagation of chaos.

\section{Signature mean-field particle system}\label{sec:PS_prop_of_chaos}
Let $N_P\in\mathbb{N}$ and consider the product probability space $\left( \Omega^{N_P},\mathcal{F}^{\otimes N_P}, \mathbb{P}^{\otimes N_P} \right)$ filtered by a natural extension of the original filtration to the product space and a family of independent $m$-dimensional Brownian motion $\{W^i\}_{i=1}^{N_P}$ adapted to it. Then the dynamics of the $i$-th particle is an $\mathbb{R}^d$-valued process satisfying the equation
\begin{equation}\label{eq:PS_sig_MKV}
    \begin{aligned}
        X_t^{i,N_P} \,=\, \eta\, + &\int_0^t b\left(s,X_s^{i,N_P},\frac{1}{N_P}\sum_{j=1}^{N_P}\Big\langle \ell,S(\augpr{X}^{j,N_P})_s\Big\rangle\right)ds\\
        &+ \int_0^t \sigma\left(s,X_s^{i,N_P},\frac{1}{N_P}\sum_{j=1}^{N_P}\Big\langle \ell,S(\augpr{X}^{j,N_P})_s\Big\rangle\right)dW^i_s,
    \end{aligned}
\end{equation}
with $t\in[0,T]$ and $i=1,\ldots,N_P$, where $\Big\langle \ell,S(\augpr{X}^{j,N_P})_s\Big\rangle$ is defined as in \eqref{eq:linear_comb_in_sig_MKV_SDE} for every $j\in\{1,\ldots,N_P\}$.

We have the following well-posedness result regarding the particle system \eqref{eq:PS_sig_MKV}.

\begin{theorem}
    Under the assumption of Hypothesis \ref{hypothesis_A}, the signature mean-field particle system \eqref{eq:PS_sig_MKV} admits a strong unique solution.
\end{theorem}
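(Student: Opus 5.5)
The plan is to view the particle system \eqref{eq:PS_sig_MKV} as a single $\mathbb{R}^{dN_P}$-valued path-dependent SDE driven by the $mN_P$-dimensional Brownian motion $(W^1,\dots,W^{N_P})$. Its coefficients depend on the state $X^{\cdot,N_P}_s$ and on the rough path lifts $\augpr{X}^{j,N_P}$ restricted to $[0,s]$. I would follow the same three-step localisation scheme as in the proof of Theorem \ref{thm:well_posed_sig_MKV}. The only change is that the expectation is replaced by the empirical average, so no fixed-point argument in Wasserstein space is needed. Strong existence and pathwise uniqueness come directly from a Picard/Gronwall argument in the $p$-variation norm, with $p\in[5/2,3)$.

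\textbf{Step 1 (truncation).} For $n\in\mathbb{N}$, I replace each $\augpr{X}^{j,N_P}$ inside the signature by $\pi_n(\augpr{X}^{j,N_P})$, the same $1$-Lipschitz projection used before. This gives a truncated system. On the ball of radius $n$, Corollary \ref{corollary:local_Lip} gives
\[
\Big|\big\langle\ell,S(\pi_n(\mathbf{x}))_s\big\rangle-\big\langle\ell,S(\pi_n(\mathbf{y}))_s\big\rangle\Big|\le C(p,N,n)\,\alpha_\infty\, d_{\textup{$p$-var};[0,s]}(\mathbf{x},\mathbf{y}),
\]
so the interaction term is globally Lipschitz in the $p$-variation distance of the lifts. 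The next task is to prove existence and uniqueness for the truncated system. For two candidate solutions, or two Picard iterates, I would apply the pathwise computations of Propositions \ref{prop:p_var_norm_estimate_m=1} and \ref{prop:p_var_norm_estimate_m=2} particle by particle. Their proofs only use the Lipschitz and boundedness properties of $b,\sigma$ and a Lipschitz bound for the measure argument. Here the integral against the coupling $\pi$ is replaced by $\frac1{N_P}\sum_j C\,\mathbb{E}\lVert\augpr{X}^{j}-\augpr{Y}^{j}\rVert^p_{\textup{$p$-var};[0,s]}$. Summing over $i$ and setting $\Delta(t):=\sum_{i}\mathbb{E}\lVert\augpr{X}^{i}-\augpr{Y}^{i}\rVert^p_{\textup{$p$-var};[0,t]}$ yields $\Delta(t)\le C\int_0^t\Delta(s)\,ds$. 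Gronwall's lemma then gives pathwise uniqueness, exactly as in \eqref{eq:pathwise_unique_proof_1}--\eqref{eq:pathwise_unique_proof_2}. The same inequality applied to successive Picard iterates produces the factorial bound $C^k t^k/k!$. The iterates are therefore Cauchy in $L^p(\Omega;\widehat{G\Omega}_p(\mathbb{R}^{d+1})^{N_P})$, which is complete because the space is Polish, and the limit is a strong solution. This route avoids Yamada--Watanabe, since everything is constructed on the given probability space.

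\textbf{Step 2 (removing the truncation).} I define the stopping times $\rho_n:=\inf\{t:\max_j\lVert\augpr{X}^{j,N_P}\rVert_{\textup{$p$-var};[0,t]}>n\}$. By pathwise uniqueness the truncated solutions are consistent up to $\rho_{n'}$, so they glue to a solution on $[0,\rho)$. To show $\rho=\infty$ a.s., I use the a priori bound of Proposition \ref{prop:a_priori_estimate}. Its proof relies only on the boundedness of $b$ and $\sigma$ (Hypothesis \ref{hypothesis_A}), not on the form of the measure argument, so it gives $\mathbb{E}\lVert\augpr{X}^{j,N_P}\rVert_{\textup{$p$-var};[0,T]}<\infty$ uniformly in $n$. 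The Markov-type contradiction argument of Step 3 of Theorem \ref{thm:well_posed_sig_MKV}, applied to the maximum over the finitely many particles, then concludes.

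\textbf{Main obstacle.} The delicate point is checking that the Appendix estimates really transfer to the empirical interaction with random, non-deterministic coefficients. In the McKean--Vlasov case the third argument is deterministic. Here it is random and depends on all the particles, so the BDG and Garsia--Rodemich--Rumsey-type bounds on the second-level increments must be redone with adapted random Lipschitz controls. I would first try to bound the second-level difference through the first-level difference and the bounded coefficients, as the Appendix does. I would also check that the $p\ge5/2$ restriction still closes the estimate with $\mathbb{E}[\sup_s\lvert\cdot\rvert^p]$ inside the time integral, instead of a deterministic quantity.
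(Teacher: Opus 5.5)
Your proposal is correct in outline, but it localises differently from the paper's proof of this theorem. The paper does not use the truncation $\pi_n$ for the particle system. It stacks the particles into one $\mathbb{R}^{dN_P}$-valued SDE, lifts the whole vector, and runs the Picard iteration on the untruncated system. It proves the factorial bound for $\mathbb{E}\big[\lVert \mathbf{X}^{(m+1)}-\mathbf{X}^{(m)}\rVert^p_{\infty,t}+\lVert \augpr{X}^{(m+1)}-\augpr{X}^{(m)}\rVert_{\textup{$p$-var};[0,t]}\big]$ only for $t<\rho_n$ and then removes the localisation. It gets a.s. uniform convergence by Borel--Cantelli, identifies the limit by passing to the limit in probability in the stochastic integrals, and proves uniqueness separately with a Gronwall argument stopped at $\rho_n^X\wedge\rho_n^Y$.

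You instead truncate the coefficients first, as in Step 1 of Theorem \ref{thm:well_posed_sig_MKV}, so the interaction is globally Lipschitz with a deterministic constant $C(n)$. A single Gronwall inequality for $\Delta$ then gives both uniqueness and the Picard contraction, and the truncation is removed afterwards by gluing and the a priori bound. This buys you three things:
\begin{enumerate}[i)]
\item a Lipschitz constant that is genuinely uniform over all iterates; in the paper's localised induction the stopping time would have to control every $\augpr{X}^{(m)}$ at once, which is left implicit;
\item convergence of the lifts directly from completeness of $L^p$;
\item the simplification that the individual lifts $\augpr{X}^{j,N_P}\in\widehat{G\Omega}_p(\mathbb{R}^{d+1})$ suffice, since the cross-integrals $\int X^{i}\circ dX^{j}$ never enter the coefficients.
\end{enumerate}

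The price is that you import the Lipschitz property of $\pi_n$ (a dilation with a time-dependent factor), which is the delicate point of that truncation. A safe alternative is $\phi_n\big(\lVert\mathbf{x}\rVert_{\textup{$p$-var};[0,s]}\big)\langle\ell,S(\mathbf{x})_s\rangle$, with a Lipschitz cutoff $\phi_n$ equal to $1$ on $[0,n]$ and $0$ on $[n+1,\infty)$. It is globally Lipschitz by Corollary \ref{corollary:local_Lip} and the reverse triangle inequality for the inhomogeneous norm, and it agrees with the original coefficient up to the exit time.

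Finally, the ``main obstacle'' you flag is not a real one. In the Appendix the third argument enters only through a pathwise Lipschitz bound, here $\big\lvert\frac{1}{N_P}\sum_j(\cdots)\big\rvert^p\le\frac{C(n)}{N_P}\sum_j d_{\textup{$p$-var};[0,s]}(\cdot,\cdot)^p$, taken before any expectation. BDG, L\'{e}pingle and the Besov--variation embedding hold for arbitrary adapted integrands. The restriction $p\ge 5/2$ only governs the exponents in the double integrals, not whether the coefficients are random.
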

\begin{proof}
    \textbf{Step 1.} To start, the particle system is reformulated as an $\mathbb{R}^{dN_P}$-valued SDE.

    For every $t\in[0,T]$, let
    \begin{equation*}
        \mathbf{X}_t := \begin{bmatrix}
            X_t^{1,N_P}\\
            \vdots\\
            X_t^{N_P,N_P}
        \end{bmatrix}\in\mathbb{R}^{dN_P}.
    \end{equation*}

    Then the time-augmented geometric $p$-rough lift of $\mathbf{X}$ is $\augpr{X}_{s,t}:=(1,A_{s,t},B_{s,t})$, where
    \begin{equation*}
        A_{s,t}:= \begin{bmatrix}
                t-s\\
                \int_{s<u_1<t} \circ dX^{1,N_P}_{u_1}\\
                \vdots\\
                \int_{s<u_1<t} \circ dX^{N_P,N_P}_{u_1}
            \end{bmatrix}
    \end{equation*}
    and
    \begin{equation*}
        B_{s,t}:= \begin{bmatrix}
                \frac{1}{2}(t^2-s^2) &\int_s^t\int_s^udr\circ dX_u^{1,N_P\top} &\cdots &\int_s^t\int_s^udr\circ dX_u^{N_P,N_P\top}\\
                \int_s^t\int_s^u\circ dX^{1,N_P}_rdu &\int_s^t\int_s^u\circ dX^{1,N_P}_r\otimes \circ dX_u^{1,N_P\top} &\cdots &\int_s^t\int_s^u\circ dX^{1,N_P}_r\otimes \circ dX_u^{N_P,N_P\top}\\
                \vdots &\vdots &\ddots &\vdots  \\ 
                \int_s^t \int_s^u\circ dX^{N_P,N_P}_rdu &\int_s^t\int_s^u\circ dX^{N_P,N_P}_r\otimes \circ dX_u^{1,N_P\top} &\cdots &\int_s^t\int_s^u\circ dX^{N_P,N_P}_r\otimes \circ dX_u^{N_P,N_P\top}
            \end{bmatrix},
    \end{equation*}
    for every $0\leq s < t\leq T$.

    Therefore, for every $i=1,\ldots,N_P$ and every $0\leq s < t\leq T$, it is possible to rewrite the time-augmented geometric $p$-rough lift of $X^{i,N_P}$ as
    \begin{equation}\label{eq:rough_lift_PS_as_rough_lift_system}
        \augpr{X}_{s,t}^{i,N_P} = \left(1,\begin{bmatrix}
            A_{s,t}^1\\
            A_{s,t}^{i+1}
        \end{bmatrix},
        \begin{bmatrix}
            B_{s,t}^{(1,1)} &B_{s,t}^{(1,i+1)}\\
            B_{s,t}^{(i+1,1)} &B_{s,t}^{(i+1,i+1)}
        \end{bmatrix}\right)  =: \augpr{X}_{s,t}^{i}
    \end{equation}

    Now, for every $t\in[0,T]$, we also define
    \begin{equation*}
        \begin{aligned}
            &\mathbf{H}:=\begin{bmatrix}
                \eta\\
                \vdots\\
                \eta
            \end{bmatrix}\in\mathbb{R}^{dN_P};\quad \mathbf{W}_t := \begin{bmatrix}
            W_t^1\\
            \vdots\\
            W_t^{N_P}
        \end{bmatrix}\in\mathbb{R}^{mN_P};\\
        &B\left(t,\mathbf{X}_t,\frac{1}{N_P}\sum_{j=1}^{N_P}\Big\langle \ell,S(\augpr{X}^j)_t\Big\rangle\right) := \begin{bmatrix}
            b_t^{1,N_P}\\
            \vdots\\
            b_t^{N_P,N_P}
        \end{bmatrix}\in\mathbb{R}^{dN_P};\\
        &\Sigma\left(t,\mathbf{X}_t,\frac{1}{N_P}\sum_{j=1}^{N_P}\Big\langle \ell,S(\augpr{X}^j)_t\Big\rangle\right) := \begin{bmatrix}
        \sigma_t^{1,N_P} & 0 & 0\\
        0 & \ddots & 0 \\
        0 & 0 & \sigma_t^{N_P,N_P}
      \end{bmatrix}\in\mathbb{R}^{dN_P}\times \mathbb{R}^{mN_P},
        \end{aligned}
    \end{equation*}
    where, for every $i=1,\ldots,N_P$,
    \begin{equation*}
        b_t^{i,N_P} := b\left(t,X_t^{i,N_P},\frac{1}{N_P}\sum_{j=1}^{N_P}\Big\langle \ell,S(\augpr{X}^{j,N_P})_t\Big\rangle\right)=b\left(t,X_t^{i,N_P},\frac{1}{N_P}\sum_{j=1}^{N_P}\Big\langle \ell,S(\augpr{X}^j)_t\Big\rangle\right),
    \end{equation*}
    where the equality is due to \eqref{eq:rough_lift_PS_as_rough_lift_system}, and $\sigma_t^{i,N_P}$ is defined similarly.

    Then, the particle system can be rewritten as
    \begin{equation}\label{eq:PS_rewritten}
        \begin{aligned}
            \mathbf{X}_t = \mathbf{H} + &\int_0^t B\left(s,\mathbf{X}_s,\frac{1}{N_P}\sum_{j=1}^{N_P}\Big\langle \ell,S(\augpr{X}^j)_s\Big\rangle\right)ds\\
            &+ \int_0^t \Sigma\left(s,\mathbf{X}_s,\frac{1}{N_P}\sum_{j=1}^{N_P}\Big\langle \ell,S(\augpr{X}^j)_s\Big\rangle\right)d\mathbf{W}_s.
        \end{aligned}
    \end{equation}

    In the next steps, the existence of a strong solution to the particle system is established via a Picard–iteration argument (see, e.g., \cite[Theorem 9.2]{Baldi}) for \eqref{eq:PS_rewritten}. Specifically, in Step 2 we define a recurrence sequence of processes and we show that it is convergent uniformly on the time interval $[0,T]$; then, in Step 3, we show that the limit process satisfies \eqref{eq:PS_rewritten}.

    \textbf{Step 2.}  For every $t\in[0,T]$, we define the recurrence sequence of processes $\{\mathbf{X}^{(m)}\}_m$ as
    \begin{align}
        &\mathbf{X}^{(0)}_t = \mathbf{H};\notag\\
        &\mathbf{X}^{(m+1)}_t = \mathbf{H} + \int_0^t B\left(s,\mathbf{X}^{(m)}_s,\frac{1}{N_P}\sum_{j=1}^{N_P}\Big\langle \ell,S(\augpr{X}^{(m),j})_s\Big\rangle\right)ds\notag\\
        &\qquad\qquad\qquad\quad+ \int_0^t \Sigma\left(s,\mathbf{X}_s^{(m)},\frac{1}{N_P}\sum_{j=1}^{N_P}\Big\langle \ell,S(\augpr{X}^{(m),j})_s\Big\rangle\right)d\mathbf{W}_s.\label{eq:recursion_PS}
    \end{align}

    To show that the sequence $\{\mathbf{X}^{(m)}\}_m$ converges uniformly on the time interval $[0,T]$, we prove by induction that for every $t\in[0,T]$,
    \begin{equation}\label{eq:induction_PS}
        \mathbb{E}\left[\big\lVert \mathbf{X}^{(m+1)}_\cdot - \mathbf{X}^{(m)}_\cdot \big\rVert_{\infty,t}^p + \big\lVert \augpr{X}^{(m+1)}_\cdot - \augpr{X}^{(m)}_\cdot \big\rVert_{\textup{$p$-var};[0,t]}\right] \leq \frac{(Ct)^{m+1}}{(m+1)!},
    \end{equation}
    where $C$ is a positive constant and $\lVert\cdot \rVert_{\textup{$p$-var};[0,t]}$ denotes the inhomogeneous $p$-variation norm.

    For $m=0$, for every $u\in[0,t]$,
    \begin{equation*}
        \begin{aligned}
            \mathbf{X}^{(1)}_u - \mathbf{X}^{(0)}_u &= \int_0^u B\left(s,\mathbf{X}_0,\frac{1}{N_P}\sum_{j=1}^{N_P}\Big\langle \ell,S(\augpr{H}^{j})_s\Big\rangle\right)ds + \int_0^u \Sigma\left(s,\mathbf{X}_0,\frac{1}{N_P}\sum_{j=1}^{N_P}\Big\langle \ell,S(\augpr{H}^{j})_s\Big\rangle\right)dW_s\\
            &= \int_0^u B\left(s,\mathbf{X}_0,\langle \ell,S(\augpr{H}^1)_s\rangle\right)ds + \int_0^u \Sigma\left(s,\mathbf{X}_0,\langle \ell,S(\augpr{H}^1)_s\rangle\right)dW_s,
        \end{aligned}
    \end{equation*}
    where $\augpr{H}^j$ is the geometric $p$-rough lift of the time-augmentation of the $j$-th component of the process $\mathbf{H}$, $j=1,\ldots,N_P$, and the second equality is due to the definition of $\mathbf{H}$.

    Therefore,
    \begin{equation*}
        \begin{aligned}
            &\mathbb{E}\left[\lVert \mathbf{X}^{(1)}_\cdot - \mathbf{X}^{(0)}_\cdot\rVert_{\infty,t}\right]\\
            &\leq 2^{p-1} \mathbb{E}\left[\Bigg\lVert \int_0^\cdot B\left(s,\mathbf{X}_0,\langle \ell,S(\augpr{H}^1)_s\rangle\right)ds \Bigg\rVert_{\infty,t}^p\right] + 2^{p-1} \mathbb{E}\left[\Bigg\lVert\int_0^\cdot \Sigma\left(s,\mathbf{X}_0,\langle \ell,S(\augpr{H}^1)_s\rangle\right)dW_s\Bigg\rVert_{\infty,t}^p\right]\\
            &\leq 2^{p-1}t \mathbb{E}\left[\int_0^t \Big\lVert B\left(s,\mathbf{X}_0,\langle \ell,S(\augpr{H}^1)_s\rangle\right)\Big\rVert_{\mathbb{R}^{dN_P}}^pds\right] + 2^{p-1}t\mathbb{E}\left[\int_0^t \Big\lVert\Sigma\left(s,\mathbf{X}_0,\langle \ell,S(\augpr{H}^1)_s\rangle\right)\Big\rVert_F^pds\right]\\
            &\leq 2^{p-1}t(M_b^2+M_\sigma^2),
        \end{aligned}
    \end{equation*}
    where the first inequality is due to H\"{o}lder's inequality; the first term in the second inequality is obtained by Jensen's inequality, while the second term is due to Burkholder-Davis-Gundy inequality (see, e.g., \cite[Theorem 3.28]{KaratzasShreve}) and then Jensen's inequality; the third inequality is because of the boundedness assumption on the coefficients (see Hypothesis \ref{hypothesis_A}).

    By similar computations to Proposition \ref{prop:a_priori_estimate},
    \begin{equation*}
        \mathbb{E}\left[\big\lVert \augpr{X}^{(1)}_\cdot - \augpr{X}^{(0)}_\cdot \big\rVert_{\textup{$p$-var};[0,t]}\right]\leq Ct,
    \end{equation*}
    where $C$ is a positive constant.
    
    Thus,
    \begin{equation*}
        \mathbb{E}\left[\big\lVert \mathbf{X}^{(1)}_\cdot - \mathbf{X}^{(0)}_\cdot \big\rVert_{\infty,t}^p + \big\lVert \augpr{X}^{(1)}_\cdot - \augpr{X}^{(0)}_\cdot \big\rVert_{\textup{$p$-var};[0,t]}\right] \leq Ct,
    \end{equation*}
    where $C$ is a positive constant possibly different from the one above.

    Now, let us assume \eqref{eq:induction_PS} to be true for $m-1$ and we prove it for $m$.

    To do this, let $\rho_n$ be a stopping time defined as follows
    \begin{equation*}
        \rho_n:= \inf_{t\geq 0}\Big\{\lVert \augpr{X}\rVert_{\textup{$p$-var};[0,t]} > n\Big\}
    \end{equation*}
    and let $t\in[0,\rho_n)$.

    For every $s\in[0,T]$, we also define
    \begin{equation*}
        B_s^{(m)}:=B\left(s,\mathbf{X}^{(m)}_s,\frac{1}{N_P}\sum_{j=1}^{N_P}\Big\langle \ell,S(\augpr{X}^{(m),j})_s\Big\rangle\right),
    \end{equation*}
    and $B_s^{(m-1)}$, $\Sigma_s^{(m)}$, and $\Sigma_s^{(m-1)}$ are defined similarly.

    We have that
    \begin{equation*}
        \begin{aligned}
            &\mathbb{E}\left[\lVert\mathbf{X}^{(m+1)}_\cdot - \mathbf{X}^{(m)}_\cdot\rVert_{\infty,t}^p\right]\\
            &\leq 2^{p-1}t \int_0^t\mathbb{E}\left[\Big\lVert B_s^{(m)} - B_s^{(m-1)}\Big\rVert_{\mathbb{R}^{dN_P}}^p\right]ds + 2^{p-1}t\int_0^t\mathbb{E}\left[\Big\lVert \Sigma_s^{(m)} - \Sigma_s^{(m-1)}\Big\rVert_F^p\right]ds\\
            &\leq 2^pL^pT \int_0^t\mathbb{E}\left[\Big\lVert \mathbf{X}_s^{(m)} - \mathbf{X}_s^{(m-1)} \Big\rVert_{\mathbb{R}^{dN_P}}^p\right]ds\\
            &\quad+ 2^pL^p\alpha_{\infty}^p\widetilde{C}(d,N)^pT\frac{1}{N_P}\sum_{j=1}^{N_P}\int_0^t\mathbb{E}\left[\Big\lVert S^{\leq N}(\augpr{X}^{(m),j})_\cdot - S^{\leq N}(\augpr{X}^{(m-1),j})_\cdot\Big\rVert_{\textup{$p$-var;[0,s]}}^p\right]ds\\
            &\leq 2^pL^pT \int_0^t\mathbb{E}\left[\Big\lVert \mathbf{X}_s^{(m)} - \mathbf{X}_s^{(m-1)} \Big\rVert_{\mathbb{R}^{dN_P}}^p\right]ds\\
            &\quad+ \frac{2(L\alpha_{\infty}\widetilde{C}(d,N)n)^pT}{N_P}\int_0^t\mathbb{E}\left[\Big\lVert \augpr{X}^{(m)}_\cdot - \augpr{X}^{(m-1)}_\cdot\Big\rVert_{\textup{$p$-var;[0,s]}}^p\right]ds
        \end{aligned}
    \end{equation*}
    where $\alpha_{\infty}:=\max_{0\leq |I|\leq N}|\alpha_I|$ and $\alpha_I$ as defined in \eqref{eq:linear_comb_in_sig_MKV_SDE}, and
    \begin{equation*}
        \widetilde{C}(d,N):= \left(\frac{(d+1)^{N+1}-1}{d}\right)^{1/2}.
    \end{equation*}
    The first inequality is due to the same computations as above and Fubini's theorem; the second one by the Lipschitz continuity of the coefficients (see Hypothesis \ref{hypothesis_A}) and similar computations to Proposition \ref{prop:p_var_norm_estimate_m=1}; the third one by the local Lipschitz continuity of the signature map with respect to the inhomogeneous $p$-variation norm and fact that
    \begin{equation*}
        \sum_{j=1}^{N_P}\Big\lVert \augpr{X}^{(m),j}_\cdot - \augpr{X}^{(m-1),j}_\cdot\Big\rVert_{\textup{$p$-var;[0,s]}}^p \leq \Big\lVert \augpr{X}^{(m)}_\cdot - \augpr{X}^{(m-1)}_\cdot\Big\rVert_{\textup{$p$-var;[0,s]}}^p.
    \end{equation*}

    Moreover, by similar computations as in Proposition \ref{prop:p_var_norm_estimate_m=1} and Proposition \ref{prop:p_var_norm_estimate_m=2}, we also obtain
    \begin{equation*}
        \mathbb{E}\left[\Big\lVert \augpr{X}^{(m)}_\cdot - \augpr{X}^{(m-1)}_\cdot\Big\rVert_{\textup{$p$-var;[0,t]}}^p\right] \leq C\int_0^t\mathbb{E}\left[\Big\lVert \augpr{X}^{(m)}_\cdot - \augpr{X}^{(m-1)}_\cdot\Big\rVert_{\textup{$p$-var;[0,s]}}^p\right]ds,
    \end{equation*}
    where $C=C(p,T,L,M_b,M_\sigma,n)$ is a positive constant.

    Therefore, for every $t\in[0,\rho_n)$,
    \begin{equation*}
        \begin{aligned}
            &\mathbb{E}\left[\big\lVert \mathbf{X}^{(m+1)}_\cdot - \mathbf{X}^{(m)}_\cdot \big\rVert_{\infty,t}^p + \big\lVert \augpr{X}^{(m+1)}_\cdot - \augpr{X}^{(m)}_\cdot \big\rVert_{\textup{$p$-var};[0,t]}\right]\\
            &\leq C\int_0^t\mathbb{E}\left[\Big\lVert \mathbf{X}_s^{(m)} - \mathbf{X}_s^{(m-1)} \Big\rVert_{\mathbb{R}^{dN_P}}^p + \Big\lVert \augpr{X}^{(m)}_\cdot - \augpr{X}^{(m-1)}_\cdot\Big\rVert_{\textup{$p$-var;[0,s]}}^p\right]ds\\
            &\leq C\int_0^t \frac{(Ct)^m}{m!}ds = \frac{(Ct)^{m+1}}{(m+1)!},
        \end{aligned}
    \end{equation*}
    where the last inequality is due to the inductive hypothesis.

    To show that \eqref{eq:induction_PS} actually hold for every $t\in[0,T]$, following the same argument as in Theorem \ref{thm:well_posed_sig_MKV}, we define $\rho:=\lim_{n\rightarrow\infty} \rho_n$.
    
    By the boundedness assumption on the coefficients (see Hypothesis \ref{hypothesis_A}), by similar arguments as in Proposition \ref{prop:a_priori_estimate}, we have that $\rho<\infty$ almost surely and we conclude.

    Let
    \begin{equation*}
        A_m:=\Bigg\{\omega\in\Omega \,:\, \lVert \mathbf{X}^{(m+1)}_\cdot(\omega) - \mathbf{X}^{(m)}_\cdot(\omega) \rVert_{\infty,T} > \frac{1}{2^m} \Bigg\}.
    \end{equation*}
    
    By Markov's inequality,
    \begin{equation*}
        \begin{aligned}
            \mathbb{P}\left(A_m\right) &\leq 2^{pm}\mathbb{E}\left[\lVert \mathbf{X}^{(m+1)}_\cdot - \mathbf{X}^{(m)}_\cdot \rVert_{\infty,T}^p\right]\\
            &\leq2^{pm}\mathbb{E}\left[\big\lVert \mathbf{X}^{(m+1)}_\cdot - \mathbf{X}^{(m)}_\cdot \big\rVert_{\infty,T}^p + \big\lVert \augpr{X}^{(m+1)}_\cdot - \augpr{X}^{(m)}_\cdot \big\rVert_{\textup{$p$-var};[0,T]}\right]\\
            &\leq2^{pm}\frac{(CT)^{m+1}}{(m+1)!}.
        \end{aligned}
    \end{equation*}

    Since $\sum_{m=0}^\infty \mathbb{P}\left(A_m\right) <\infty$, by the Borel-Cantelli lemma, $\mathbb{P}\left(\limsup_{m\rightarrow\infty} A_m\right)=0$. Hence, almost surely, there exists a finite random index $M_0$ such that for every $m\geq M_0$,
    \begin{equation*}
        \lVert \mathbf{X}^{(m+1)}_\cdot - \mathbf{X}^{(m)}_\cdot \rVert_{\infty,T} \leq \frac{1}{2^m}.
    \end{equation*}
    It follows that the series
    \begin{equation*}
        \sum_{k=0}^{\infty} \left(\mathbf{X}^{(k+1)}-\mathbf{X}^{(k)}\right)
    \end{equation*}
    converges uniformly on $[0,T]$ almost surely. Consequently,
    \begin{equation*}
        \mathbf{X}^{(m)} = \mathbf{H} + \sum_{k=0}^{m-1} \left(\mathbf{X}^{(k+1)}-\mathbf{X}^{(k)}\right)
    \end{equation*}
    converges uniformly on $[0,T]$, almost surely, to a continuous process $\mathbf{X}$.

    \textbf{Step 3.} We now show that the limit process $\mathbf{X}$ is a strong solution to SDE \eqref{eq:PS_rewritten}.

    To start, let us remark that by the same argument for the convergence of $\{\mathbf{X}^{(m)}\}_m$, it is possible to prove that 
    $\{\augpr{X}^{(m)}\}_m$ converges to a $G^2(\mathbb{R}^{d+1})$-valued process $\augpr{Z}$; then, by the continuity of the Lyons' lift, $\augpr{Z}=\augpr{X}$, where $\augpr{X}$ denotes the time-augmented geometric $p$-rough path lift of $\mathbf{X}$.

    By similar computations as in Step 2,
    \begin{equation*}
        \lVert B_\cdot^{(m)} - B_\cdot \rVert_{\infty,T} \leq C\left(\lVert \mathbf{X}^{(m)}_\cdot - \mathbf{X}_\cdot\rVert_{\infty,T} + \lVert \augpr{X}^{(m)}_\cdot - \augpr{X}_\cdot\rVert_{\textup{$p$-var};[0,T]}\right),
    \end{equation*}
    where for every $s\in[0,T]$,
    \begin{equation}\label{eq:def_B}
        B_s:=B\left(s,\mathbf{X}_s,\frac{1}{N_P}\sum_{j=1}^{N_P}\Big\langle \ell,S(\augpr{X}^j)_s\Big\rangle\right).
    \end{equation}
    Therefore, uniformly on $[0,T]$,
    \begin{equation*}
        B\left(s,\mathbf{X}^{(m)}_s,\frac{1}{N_P}\sum_{j=1}^{N_P}\Big\langle \ell,S(\augpr{X}^{(m),j})_s\Big\rangle\right)\xrightarrow{m\rightarrow\infty}B\left(s,\mathbf{X}_s,\frac{1}{N_P}\sum_{j=1}^{N_P}\Big\langle \ell,S(\augpr{X}^j)_s\Big\rangle\right)\quad\textup{a.s.},
    \end{equation*}
    which implies
    \begin{equation*}
        \int_0^t B\left(s,\mathbf{X}^{(m)}_s,\frac{1}{N_P}\sum_{j=1}^{N_P}\Big\langle \ell,S(\augpr{X}^{(m),j})_s\Big\rangle\right)ds\xrightarrow{m\rightarrow\infty}\int_0^tB\left(s,\mathbf{X}_s,\frac{1}{N_P}\sum_{j=1}^{N_P}\Big\langle \ell,S(\augpr{X}^j)_s\Big\rangle\right)ds\quad\textup{a.s.}
    \end{equation*}

    Analogously, we have that, uniformly on $[0,T]$,
    \begin{equation*}
        \Sigma\left(s,\mathbf{X}^{(m)}_s,\frac{1}{N_P}\sum_{j=1}^{N_P}\Big\langle \ell,S(\augpr{X}^{(m),j})_s\Big\rangle\right)\xrightarrow{m\rightarrow\infty}\Sigma\left(s,\mathbf{X}_s,\frac{1}{N_P}\sum_{j=1}^{N_P}\Big\langle \ell,S(\augpr{X}^j)_s\Big\rangle\right)\quad\textup{a.s.},
    \end{equation*}
    which implies
    \begin{equation*}
        \int_0^T\Bigg\lVert \Sigma\left(s,\mathbf{X}^{(m)}_s,\frac{1}{N_P}\sum_{j=1}^{N_P}\Big\langle \ell,S(\augpr{X}^{(m),j})_s\Big\rangle\right) - \Sigma\left(s,\mathbf{X}_s,\frac{1}{N_P}\sum_{j=1}^{N_P}\Big\langle \ell,S(\augpr{X}^j)_s\Big\rangle\right) \Bigg\rVert_F^2ds\xrightarrow{m\rightarrow\infty}0\quad\textup{a.s.}
    \end{equation*}
    Then, \cite[Proposition B.41]{BainAlan2009FoSF}, it holds the convergence in probability of the stocahstic integral
    \begin{equation*}
        \int_0^t \Sigma\left(s,\mathbf{X}^{(m)}_s,\frac{1}{N_P}\sum_{j=1}^{N_P}\Big\langle \ell,S(\augpr{X}^{(m),j})_s\Big\rangle\right)d\mathbf{W}_s\xrightarrow{m\rightarrow\infty}\int_0^t\Sigma\left(s,\mathbf{X}_s,\frac{1}{N_P}\sum_{j=1}^{N_P}\Big\langle \ell,S(\augpr{X}^j)_s\Big\rangle\right)d\mathbf{W}_s.
    \end{equation*}

    Finally, taking the limit in probability on both side of \eqref{eq:recursion_PS}, we obtain that $\mathbf{X}$ is a strong solution to \eqref{eq:PS_rewritten} (equivalently, \eqref{eq:PS_sig_MKV}).

    \textbf{Step 4.} In this final step, it is established pathwise uniqueness for a solution to equation \eqref{eq:PS_rewritten}.

    Let $\mathbf{X},\mathbf{Y}$ be two strong solutions to \eqref{eq:PS_rewritten}. Also, let us define the following stopping times:
    \begin{equation*}
        \rho_n^X:=\inf_{t\geq 0}\Big\{\lVert \augpr{X}\rVert_{\textup{$p$-var};[0,t]} > n\Big\};\qquad \rho_n^Y:=\inf_{t\geq 0}\Big\{\lVert \augpr{Y}\rVert_{\textup{$p$-var};[0,t]} > n\Big\}.
    \end{equation*}

    In the following, $C$ denotes a positive constant that might change from line to line.
    
    For every $t\in[0,\rho_n^X\wedge\rho_n^Y)$, by the same computations as in Step 2,
    \begin{equation*}
        \mathbb{E}\left[\lVert \mathbf{X}_\cdot - \mathbf{Y}_\cdot\rVert_{\infty,t}\right]\leq C\int_0^t\mathbb{E}\left[\lVert\mathbf{X}_\cdot - \mathbf{Y}_\cdot\rVert_{\infty,s} + \lVert \augpr{X}_\cdot - \augpr{Y}_\cdot\rVert_{\textup{$p$-var};[0,s]}\right]ds.
    \end{equation*}
    
    Also, by the same arguments as in Proposition \ref{prop:p_var_norm_estimate_m=1} and Proposition \ref{prop:p_var_norm_estimate_m=2},
    \begin{equation*}
        \mathbb{E}\left[\lVert \augpr{X}_\cdot - \augpr{Y}_\cdot\rVert_{\textup{$p$-var};[0,t]}\right]\leq C\int_0^t\mathbb{E}\left[ \lVert \augpr{X}_\cdot - \augpr{Y}_\cdot\rVert_{\textup{$p$-var};[0,s]}\right]ds.
    \end{equation*}

    Therefore,
    \begin{equation*}
        \mathbb{E}\left[\lVert \mathbf{X}_\cdot - \mathbf{Y}_\cdot\rVert_{\infty,t} + \lVert \augpr{X}_\cdot - \augpr{Y}_\cdot\rVert_{\textup{$p$-var};[0,t]}\right]\leq C\int_0^t\mathbb{E}\left[\lVert\mathbf{X}_\cdot - \mathbf{Y}_\cdot\rVert_{\infty,s} + \lVert \augpr{X}_\cdot - \augpr{Y}_\cdot\rVert_{\textup{$p$-var};[0,s]}\right]ds.
    \end{equation*}

    By Gronwall's lemma,
    \begin{equation*}
        \mathbb{E}\left[\lVert \mathbf{X}_\cdot - \mathbf{Y}_\cdot\rVert_{\infty,t} + \lVert \augpr{X}_\cdot - \augpr{Y}_\cdot\rVert_{\textup{$p$-var};[0,t]}\right]=0,
    \end{equation*}
    which implies $\mathbb{E}\left[\lVert \mathbf{X}_\cdot - \mathbf{Y}_\cdot\rVert_{\infty,t}\right]=0$, i.e., pathwise uniqueness on the interval $[0,\rho_n^X\wedge\rho_n^Y)$.

    By the same arguments as in Proposition \ref{prop:a_priori_estimate},
    \begin{equation*}
        \rho^X:=\lim_{n\rightarrow\infty}\rho_n^X <\infty\quad\textup{a.s.};\qquad\rho^Y:=\lim_{n\rightarrow\infty}\rho_n^Y <\infty\quad\textup{a.s.}
    \end{equation*}
    Therefore, pathwise uniquness is established on the whole interval $[0,T]$.
\end{proof}

Now that we have established well-posedness for the \emph{signature mean-field particle system}, i.e., the particle system \eqref{eq:PS_sig_MKV} associated with the signature McKean--Vlasov SDE \eqref{eq:sig_MKV_SDE}, we establish the propagation of chaos property.

\subsection{Propagation of chaos}
Let us consider $N_P$ independent copies of the solution to the signature McKean--Vlasov SDE \eqref{eq:sig_MKV_SDE}. Specifically, for every $i\in\{1,\ldots,N_P\}$, $X^i$ is a solution to the signature McKean--Vlasov SDE
\begin{equation}\label{eq:copy_sig_MKV}
    X^i_t = \eta + \int_0^t b\left(s,X^i_s,\Big\langle \ell,\mathbb{E}\left[S(\augpr{X}^i)_s\right]\Big\rangle\right)ds + \int_0^t \sigma\left(s,X^i_s,\Big\langle \ell,\mathbb{E}\left[S(\augpr{X}^i)_s\right]\Big\rangle\right)dW^i_s,
\end{equation}
with $t\in[0,T]$.

Recall the definition of propagation of chaos \cite{ChaintronDiez}.
\begin{definition}\label{def:chaos_propagation}
    For any given  $N_P\in \mathbb N$, let $(X^{1,N_P},\ldots,X^{N_P,N_P})$ and $(X^1,\ldots,X^{N_P})$ be the solutions of the systems \eqref{eq:PS_sig_MKV} and \eqref{eq:copy_sig_MKV}, respectively. Propagation of chaos occurs whenever for any $k\ge 1$,
    \begin{equation*}
        \left( X^{1,N_P},\ldots,X^{k,N_P} \right)\xrightarrow[\mathcal{L}]{N_P\to \infty} \left( X^1,\ldots,X^k \right).
    \end{equation*}
\end{definition}

The following result establish propagation of chaos for the particle system \eqref{eq:PS_sig_MKV} associated with the signature McKean--Vlasov SDE \eqref{eq:sig_MKV_SDE}.
\begin{theorem}
   Let $N_P\in \mathbb N$, $(X^{1,N_P},\ldots,X^{N_P,N_P})$ be the solution to the system \eqref{eq:PS_sig_MKV}, and $(X^1,\ldots,X^{N_P})$ be the solution to the particle system \eqref{eq:copy_sig_MKV}. Then propagation of chaos occurs, according with Definition \ref{def:chaos_propagation}.
\end{theorem}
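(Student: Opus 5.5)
We prove propagation of chaos by synchronous coupling. We use the same Brownian motions $W^i$ and the same initial datum in \eqref{eq:PS_sig_MKV} and \eqref{eq:copy_sig_MKV}, and show that
\begin{equation*}
    \mathbb{E}\left[\lVert X^{i,N_P}-X^i\rVert^p_{\infty,T} + \lVert \augpr{X}^{i,N_P}-\augpr{X}^i\rVert^p_{\textup{$p$-var};[0,T]}\right]\xrightarrow{N_P\to\infty}0
\end{equation*}
for each fixed $i$. By exchangeability the left-hand side does not depend on $i$. Convergence in $L^p$ of the first $k$ coordinates then gives convergence in law of $(X^{1,N_P},\dots,X^{k,N_P})$ to $(X^1,\dots,X^k)$. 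Because $\mathbb{E}[S(\augpr{X}^i)_s]$ is the same for all $i$, we write $\mu_s:=\langle\ell,\mathbb{E}[S(\augpr{X}^1)_s]\rangle$.

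Since the signature map is only locally Lipschitz (Corollary \ref{corollary:local_Lip}), we first localize. Set
\begin{equation*}
    \tau_n:=\min_{j\le N_P}\left(\rho_n^{X^{j,N_P}}\wedge\rho_n^{X^j}\right),
\end{equation*}
or, more robustly, replace $\augpr{X}^{j,N_P}$ and $\augpr{X}^j$ by $\pi_n(\cdot)$ as in Step 1 of Theorem \ref{thm:well_posed_sig_MKV}. By Hypothesis \ref{hypothesis_A}, the difference of the coefficients is bounded by $L$ times
\begin{equation*}
    \lVert X^{i,N_P}_s-X^i_s\rVert + \Big|\tfrac{1}{N_P}\textstyle\sum_j\langle\ell,S(\augpr{X}^{j,N_P})_s\rangle-\mu_s\Big|.
\end{equation*}
We split the second term as
\begin{equation*}
    \tfrac1{N_P}\textstyle\sum_j\left|\langle\ell,S(\augpr{X}^{j,N_P})_s-S(\augpr{X}^{j})_s\rangle\right| + \left|\tfrac1{N_P}\textstyle\sum_j\langle\ell,S(\augpr{X}^j)_s\rangle-\mu_s\right| =: I_1(s)+I_2(s).
\end{equation*}
On the localized event, Corollary \ref{corollary:local_Lip} gives $I_1(s)\le C_n\alpha_\infty\frac1{N_P}\sum_j\lVert\augpr{X}^{j,N_P}-\augpr{X}^j\rVert_{\textup{$p$-var};[0,s]}$. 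After taking $p$-th moments, Jensen and exchangeability bound this by $C_n\,\mathbb{E}\lVert\augpr{X}^{1,N_P}-\augpr{X}^1\rVert^p_{\textup{$p$-var};[0,s]}$. The term $I_2$ is a law-of-large-numbers error for i.i.d.\ random variables. To control it we need $\mathbb{E}|\langle\ell,S(\augpr{X}^1)_s\rangle|^2<\infty$ uniformly in $s$, which we get from the bounded coefficients: $\augpr{X}^1$ has Gaussian-type tails in $p$-variation (via BDG and the Proposition \ref{prop:a_priori_estimate} argument at higher moments), and level $m$ of the signature is bounded by $C\lVert\augpr{X}\rVert^m_{\textup{$p$-var}}$. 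Hence $\mathbb{E}\sup_s I_2(s)^2\le C/N_P$. For the sup over $s$ we either use continuity in $s$ together with a maximal inequality, or simply integrate $\mathbb{E}I_2(s)^p$ in $s$, which is all that the Gronwall step needs.

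Next we repeat the computations of Propositions \ref{prop:p_var_norm_estimate_m=1} and \ref{prop:p_var_norm_estimate_m=2}, with the coupling term replaced by $I_1+I_2$. This yields, for $t\le T$, with $\Delta_t:=\mathbb{E}[\lVert X^{1,N_P}-X^1\rVert^p_{\infty,t\wedge\tau_n}+\lVert\augpr{X}^{1,N_P}-\augpr{X}^1\rVert^p_{\textup{$p$-var};[0,t\wedge\tau_n]}]$,
\begin{equation*}
    \Delta_t\le C_n\int_0^t\Delta_s\,ds + C_n\int_0^T\mathbb{E}[I_2(s)^p]\,ds,
\end{equation*}
and Gronwall gives $\Delta_T\le C_nN_P^{-\min(1,p/2)}\to0$ for fixed $n$. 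To remove the localization, we use that $\mathbb{P}(\tau_n<T)\le N_P\sup_j\mathbb{P}(\rho^{(j)}_n<T)$. This union bound over $N_P$ particles is the main obstacle: the naive Markov estimate from Proposition \ref{prop:a_priori_estimate} is not enough. We would therefore use exponential tails of the $p$-variation of the lifts (uniform in $N_P$, thanks to the bounded coefficients) and choose $n=n(N_P)\sim\sqrt{\log N_P}$, so that $N_P e^{-cn^2}\to0$ while $C_n N_P^{-1/2}\to0$. Note that $C_n$ grows like $e^{Cn^{q}}$ through the Gronwall constant; if this growth is too fast, we fall back on the $\pi_n$-truncated dynamics. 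Working with $\pi_n$ avoids any union bound over particles: the truncated particle system and the truncated McKean--Vlasov equation coincide with the original ones on events of high probability for each fixed particle $i$, and we take the limits in the order $N_P\to\infty$ first, then $n\to\infty$. Finally, we convert convergence in probability of $\lVert X^{i,N_P}-X^i\rVert_{\infty,T}$ for $i\le k$ into convergence in law of the $k$-tuple.
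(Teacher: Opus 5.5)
Your scheme is essentially the paper's: synchronous coupling with the i.i.d.\ copies, localization by stopping times that are minima over all particles, the estimates of Propositions \ref{prop:p_var_norm_estimate_m=1}--\ref{prop:p_var_norm_estimate_m=2}, Gronwall, and a law of large numbers. The main difference is your split of the mean-field error into the scalar terms $I_1+I_2$. The paper instead compares $\overline{\mathcal{W}}^p_t(\mu^{N_P},\mu)$ through the empirical measure $\nu^{N_P}$ of the copies and uses a law of large numbers on rough-path space. Your split needs only a real-valued law of large numbers and even gives a rate. The argument is nevertheless incomplete at the step you flag yourself: removing the localization uniformly in $N_P$. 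For fixed $n$, $\mathbb{P}(\tau_n\geq T)\leq\mathbb{P}\big(\lVert\augpr{X}^1\rVert_{\textup{$p$-var};[0,T]}\leq n\big)^{N_P}\to0$ whenever this probability is below $1$. So your fixed-$n$ bound on $\Delta_T$ says nothing about $[0,T]$ in the limit. The paper has the same hole: it localizes with $\rho^{\textup{PS}}_n\wedge\rho^{\textup{IID}}_n$ and then only notes $\rho^{\textup{PS}},\rho^{\textup{IID}}=\infty$ a.s., which is a statement at fixed $N_P$.

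Neither of your remedies closes the gap.

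\emph{Diagonal choice.} The Gronwall factor $e^{C_nT}$ contains the $p$-th power of the local Lipschitz constant of $x\mapsto\langle\ell,S(x)_s\rangle$ on the localizing ball. Once $\ell$ charges a level $\geq 3$, this constant grows at least linearly in the homogeneous radius $R$ of the ball, since a level-$3$ coefficient is trilinear in the increments. Hence $e^{C_nT}$ is at least of order $e^{cR^p}$ with $p>2$. Even granting Gaussian tails $e^{-cR^2}$, the union bound needs $R^2\gtrsim\log N_P$, and then $e^{C_nT}$ outgrows every power of $N_P$. (If $\ell$ only involves levels $\leq 2$, the functional is globally Lipschitz for the inhomogeneous distance and no localization is needed.)

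\emph{The $\pi_n$ fallback.} It rests on a false claim. The truncated particle system coincides with the original one only where all $N_P$ lifts stay in the ball, because particle $i$ feels every particle through the empirical average; this is the same union event. The truncated McKean--Vlasov solution in general coincides with the original one on no event at all, because truncation changes the deterministic input $\langle\ell,\mathbb{E}[S(\pi_n(\cdot))_s]\rangle$ for every $\omega$. Taking $N_P\to\infty$ and then $n\to\infty$ therefore needs a truncation estimate uniform in $N_P$. The natural Gronwall proof of such an estimate reproduces the same race between $e^{C_nT}$ and the truncation tail.

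What is missing is an argument that never has to beat an $n$-dependent Gronwall constant with a tail bound. One possibility is a compactness--uniqueness route:
\begin{enumerate}[i)]
\item tightness of the empirical measures of the lifts from uniform moment bounds;
\item identification of limit points via continuity and uniform integrability of $x\mapsto\langle\ell,S(x)_s\rangle$;
\item uniqueness in law for \eqref{eq:sig_MKV_SDE}.
\end{enumerate}
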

\begin{proof}
    To start, let us define
    \begin{equation*}
        \mu^N_t := \frac{1}{N_P}\sum_{i=1}^{N_P} \delta_{\augpr{X}^{i,N_P}_t};\qquad \nu^N_t := \frac{1}{N_P}\sum_{i=1}^{N_P} \delta_{\augpr{X}^{i}_t},
    \end{equation*}
    where $\delta_\cdot$ denotes the Dirac delta, and we denote with $\mu$ the law of the time-augmented geometric $p$-rough lift of the solution to the signature McKean--Vlasov SDE \eqref{eq:sig_MKV_SDE}.
    
    Following the proof of well-posedness for the signature McKean--Vlasov SDE \eqref{eq:sig_MKV_SDE}, we define the following stopping times:
    \begin{equation*}
        \rho_n^{\textup{PS}}:= \inf_{t\geq 0}\Big\{\lVert \augpr{X}^{i,N_P}\rVert_{\textup{$p$-var};[0,t]} > n\,\textup{for some }\,i\in{1,\ldots,N_P}\Big\}
    \end{equation*}
    and
    \begin{equation*}
        \rho_n^{\textup{IID}}:= \inf_{t\geq 0}\Big\{\lVert \augpr{X}^i\rVert_{\textup{$p$-var};[0,t]} > n\,\textup{for some }\,i\in{1,\ldots,N_P}\Big\},
    \end{equation*}
    where $\lVert\cdot\rVert_{\textup{$p$-var};[0,t]}$ denotes the inhomogeneous $p$-variation norm (see Definition \ref{def:inhom_p_var_norm}).
    
    Let $t\in[0,\rho_n^{\textup{PS}}\wedge\rho_n^{\textup{IID}})$ and $\overline{\mathcal{W}}^p_t$ be the Wasserstein distance defined in Definition \ref{def:Wasserstein_norm} with $\rho=d_{\textup{inhom $p$-var;$[0,t]$}}$. We have that
    \begin{equation}\label{eq:computation_prop_of_chaos}
        \begin{aligned}
            \mathbb{E}\left[\left(\overline{\mathcal{W}}^p_t(\mu^{N_P},\nu^{N_P})\right)^p\right]&\leq \mathbb{E}\left[\frac{1}{N_P}\sum_{i=1}^{N_P} \Big\lVert \augpr{X}^{i,N_P}_t - \augpr{X}^i_t \Big\rVert^p_{\textup{$p$-var};[0,t]}\right]\\
            &=\mathbb{E}\left[ \Big\lVert \augpr{X}^{1,N_P}_t - \augpr{X}^1_t \Big\rVert^p_{\textup{$p$-var};[0,t]}\right]\\
            &\leq  Ce^{CT}\int_0^t \mathbb{E}\left[\left(\overline{\mathcal{W}}^p_s(\mu^{N_P},\mu)\right)^p\right]ds,
        \end{aligned}
    \end{equation}
    where $C=C(d,n,p,N,T,\alpha_\infty,L,M_b,M_\sigma)$ with $\alpha_\infty$ as in Proposition \ref{prop:p_var_norm_estimate_m=1}. The first inequality is due to the fact that 
    \begin{equation*}
        \frac{1}{N_P}\sum_{i=1}^{N_P} \delta_{(\augpr{X}^{i,N_P}_t,\augpr{X}^{i}_t)}
    \end{equation*}
    is a coupling of $\mu^N_t$ and $\nu^N_t$; the second inequality is obtained by the same computations as in \eqref{eq:well_posed_res1}.

    Therefore,
    \begin{equation}\label{eq:computation_prop_of_chaos_2}
        \begin{aligned}
            \mathbb{E}\left[\left(\overline{\mathcal{W}}^p_t(\mu^{N_P},\mu)\right)^p\right] &\leq 2^{p-1}\mathbb{E}\left[\left(\overline{\mathcal{W}}^p_t(\mu^{N_P},\nu^{N_P})\right)^p\right] + 2^{p-1}\mathbb{E}\left[\left(\overline{\mathcal{W}}^p_t(\nu^{N_P},\mu)\right)^p\right]\\
            &\leq 2^{p-1}Ce^{CT}\int_0^t \mathbb{E}\left[\left(\overline{\mathcal{W}}^p_s(\mu^{N_P},\mu)\right)^p\right]ds + 2^{p-1}\mathbb{E}\left[\left(\overline{\mathcal{W}}^p_t(\nu^{N_P},\mu)\right)^p\right],
        \end{aligned}
    \end{equation}
    where the first inequality is due to Cauchy-Schwartz's inequality and the second one follows from \eqref{eq:computation_prop_of_chaos}.

    By applying Gronwall's inequality to \eqref{eq:computation_prop_of_chaos_2},
    \begin{equation*}
        \mathbb{E}\left[\left(\overline{\mathcal{W}}^p_t(\mu^{N_P},\mu)\right)^p\right] \leq 2^{p-1}\exp(2^{p-1}Ce^{CT}T)\mathbb{E}\left[\left(\overline{\mathcal{W}}^p_t(\nu^{N_P},\mu)\right)^p\right].
    \end{equation*}

     By the law of large numbers, since $\augpr{X}^i\sim\mu$ for every $i\in\{1,\ldots,N_P\}$,
    \begin{equation*}
        \mathbb{E}\left[\left(\overline{\mathcal{W}}^p_t(\nu^{N_P},\mu)\right)^p\right] \xrightarrow{N_P\rightarrow\infty} 0.
    \end{equation*}

    Now, fixed $k\geq 1$,
    \begin{equation*}
        \begin{aligned}
            \mathbb{E}\left[\max_{i=1,\ldots,k}\lVert X^{i,N_P}-X^i\rVert_{\infty,t}^p\right] &\leq \sum_{i=1}^k \mathbb{E}\left[\lVert X^{i,N_P}-X^i\rVert_{\infty,t}^p\right]\\
            &\leq \sum_{i=1}^k\mathbb{E}\left[ \Big\lVert \augpr{X}^{i,N_P}_t - \augpr{X}^i_t \Big\rVert^p_{\textup{$p$-var};[0,t]}\right]\\
            &\leq kCe^{CT}\int_0^t \mathbb{E}\left[\left(\overline{\mathcal{W}}^p_s(\mu^{N_P},\mu)\right)^p\right]ds \xrightarrow{N_P\rightarrow\infty} 0.
        \end{aligned}
    \end{equation*}

    Finally, we have established propagation of chaos for the particle system \eqref{eq:PS_sig_MKV} associated with the signature McKean--Vlasov SDE \eqref{eq:sig_MKV_SDE} in the time interval $[0,\rho_n^{\textup{PS}}\wedge\rho_n^{\textup{IID}})$. Now, let us define
    \begin{equation*}
        \rho^{\textup{PS}}:=\lim_{n\rightarrow \infty}\rho_n^{\textup{PS}};\qquad \rho^{\textup{IID}}:=\lim_{n\rightarrow \infty}\rho_n^{\textup{IID}}.
    \end{equation*}
    By similar computations as in Proposition \ref{prop:a_priori_estimate}, it is possible to show that $\rho^{\textup{PS}},\,\rho^{\textup{IID}} = \infty$ almost surely. Thus, propagation of chaos for \eqref{eq:PS_sig_MKV} is established on the whole interval $[0,T]$.
    
\end{proof}
Hence, by propagation of chaos, the expectation of the linear combination of the signature of the geometric $p$-rough lift of the the time-augmented solution in the signature McKean--Vlasov SDE \eqref{eq:sig_MKV_SDE} can be approximated by the empirical average of the linear combination of the signatures of the geometric $p$-rough path lifts of the time-augmented dynamics of the particles in the associated $N_P$-particle system \eqref{eq:PS_sig_MKV}.

\vskip 0.5cm
\noindent
\paragraph{Acknowledgements.} This work was supported by the SURE-AI Centre grant 357482, Research Council of Norway (Fred Espen Benth and Salvador Ortiz-Latorre). The authors would like to thank Francesco Carlo De Vecchi (University of Pavia) for the helpful discussions.

\appendix
\section{Appendix}\label{appendix:proofs}
This appendix collects technical material omitted from the main text for readability. We start by recalling the definition of tree-like equivalence used in Theorem \ref{thm:signature_rough_case} and several auxiliary results needed in the sequel, and then give detailed proofs of Proposition \ref{prop:a_priori_estimate}, Proposition \ref{prop:p_var_norm_estimate_m=1}, and Proposition \ref{prop:p_var_norm_estimate_m=2}.
\medskip

\begin{definition}[\cite{Gromov1987}, Appendix B.1 \cite{Chevyrev2022}]\label{def:tree_like_eq}
    \begin{enumerate}[i)]
        \item An $\mathbb{R}$-tree is a metric space $\mathfrak{T}$ such that for every three points $x,y,\rho\in \mathfrak{T}$ there exists a point $c=x\wedge y$ such that the geodesic segments $[\rho ,x],[\rho ,y]$ intersect in the segment $[\rho ,c]$ and also $c\in [x,y]$.
        \item For a topological space $\mathcal{X}$, a function $x:[0,T_x]\rightarrow\mathcal{X}$ is called tree-like if $x$ is continuous and there exists an $\mathbb{R}$-tree $\mathfrak{T}$, a continuous function $\varphi:[0,T_x]\rightarrow\mathfrak{T}$, and a map $\psi:\mathfrak{T}\rightarrow\mathcal{X}$ such that $\varphi(0)=\varphi(T_x)$ and $x=\psi\circ\varphi$.
        \item Let
        \begin{equation*}
            \overleftarrow{x}:[0,T_x]\rightarrow\mathcal{X};\qquad \overleftarrow{x}(t):=x(T_x-t)
        \end{equation*}
        denote the time-reversal of $x$. For another function $y:[0,T_y]\rightarrow\mathcal{X}$, we denote by $x*y$ the concatenation of $x$ and $y$. That is,
        \begin{equation*}
            x*y:[0,T_x+T_y]\rightarrow\mathcal{X};\qquad x*y(t)=\begin{cases}
                x(t)\quad &t\in[0,T_x]\\
                y(t)\quad &t\in(T_x,T_x+T_y].
            \end{cases}
        \end{equation*}
        We say that $x$ and $y$ are tree-like-equivalent, and write $x\sim_t y$, if $x*\overleftarrow{y}$ is tree-like.
    \end{enumerate}
\end{definition}
\medskip

\begin{theorem}[Besov-variation embedding]\label{thm:Besov_variation_embedding}
    Let $q>1$, $\alpha\in(1/q,1)$, and let $x\in C([0,T],E)$, where $(E,d)$ is a metric space. For $0\leq s<t\leq T$, define
    \begin{equation*}
        |x|^q_{W^{\alpha,q};[s,t]} :=\int\int_{[s,t]^2}\frac{d(x_v,x_u)^q}{|v-u|^{1+q\alpha}}dudv.
    \end{equation*}
    Then there exists a constant $C=C(\alpha,q)$ such that, for all $0\leq s<t\leq T$,
    \begin{equation*}
        \sup_D\left(\sum_{i=0}^{n-1} d(x_{t_i},x_{t_{i+1}})^{1/\alpha}\right)^\alpha \leq C|t-s|^{\alpha-1/q}|x|_{_{W^{\alpha,q};[s,t]}},
    \end{equation*}
    where $D=D_{[s,t]}$ is a partition of $[s,t]$.
\end{theorem}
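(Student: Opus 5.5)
The plan is to combine the Garsia--Rodemich--Rumsey (GRR) inequality, applied on each subinterval $[u,v]\subset[s,t]$, with the superadditivity of the Besov energy and then Hölder's inequality over the partition. Set
\begin{equation*}
    \omega(u,v) := |x|^q_{W^{\alpha,q};[u,v]} = \int\int_{[u,v]^2}\frac{d(x_{r'},x_r)^q}{|r'-r|^{1+q\alpha}}\,dr\,dr',\qquad s\le u<v\le t.
\end{equation*}
This is a control: it is superadditive, since for $u\le w\le v$ the disjoint squares $[u,w]^2$ and $[w,v]^2$ lie in $[u,v]^2$ and the integrand is nonnegative, so $\omega(u,w)+\omega(w,v)\le\omega(u,v)$.

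\textbf{Step 1 (local GRR bound).} I will show that there is $C=C(\alpha,q)$ with
\begin{equation*}
    d(x_u,x_v)\le C\,|v-u|^{\alpha-1/q}\,\omega(u,v)^{1/q}\qquad \text{for all } s\le u<v\le t.
\end{equation*}
First reduce to a Banach-valued path. The image $x([0,T])$ is compact, hence separable, so a Fréchet--Kuratowski embedding maps it isometrically into $\ell^\infty$. Replacing $d$ by the norm there changes neither side. Then apply GRR on $[u,v]$ with $\Psi(r)=r^q$ and $p(r)=r^{\alpha+1/q}$. With these choices $\Psi\bigl(d(x_{r'},x_r)/p(|r'-r|)\bigr)$ is exactly the integrand of $\omega$, so the GRR hypothesis holds with $B=\omega(u,v)$. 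GRR then gives
\begin{equation*}
    d(x_u,x_v)\le 8\int_0^{v-u}\Bigl(\tfrac{4B}{r^2}\Bigr)^{1/q}\,dp(r)= C\,B^{1/q}\int_0^{v-u} r^{\alpha-1/q-1}\,dr .
\end{equation*}
The last integral is finite precisely because $\alpha>1/q$, and it equals a constant times $(v-u)^{\alpha-1/q}$.

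\textbf{Step 2 (summing over a partition).} Fix a partition $D=\{t_i\}$ of $[s,t]$. Raising Step 1 to the power $1/\alpha$ gives
\begin{equation*}
    d(x_{t_i},x_{t_{i+1}})^{1/\alpha}\le C\,|t_{i+1}-t_i|^{a}\,\omega(t_i,t_{i+1})^{b},\qquad a:=1-\tfrac{1}{q\alpha},\quad b:=\tfrac{1}{q\alpha}.
\end{equation*}
Since $a,b\in(0,1)$ and $a+b=1$, apply Hölder's inequality with exponents $1/a$ and $1/b$, then superadditivity:
\begin{equation*}
    \sum_i d(x_{t_i},x_{t_{i+1}})^{1/\alpha}\le C\Bigl(\sum_i|t_{i+1}-t_i|\Bigr)^{a}\Bigl(\sum_i\omega(t_i,t_{i+1})\Bigr)^{b}\le C\,|t-s|^{a}\,\omega(s,t)^{b}.
\end{equation*}
Raise both sides to the power $\alpha$. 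Since $a\alpha=\alpha-1/q$ and $b\alpha=1/q$, this gives $C|t-s|^{\alpha-1/q}|x|_{W^{\alpha,q};[s,t]}$. The bound is uniform in $D$, so taking the supremum over partitions completes the argument.

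The main obstacle is Step 1, namely having GRR available in the metric setting with explicit constants. If quoting GRR is not acceptable, I would reprove it by the usual dyadic/Campanato chaining argument. Choose nested points $u_n\to u$ at which the averaged energy $\int_{[u,v]} d(x_{u_n},x_r)^q |u_n-r|^{-1-q\alpha}\,dr$ is controlled by the mean over the current interval. Then sum the resulting geometric series of increments, which converges because $\alpha-1/q>0$. Step 2 is routine.
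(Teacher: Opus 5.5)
Your proof is correct and is essentially the argument behind the paper's citation (Friz--Victoir, Corollary A.3). There, the Garsia--Rodemich--Rumsey estimate with $\Psi(r)=r^q$, $p(r)=r^{\alpha+1/q}$ is applied on each subinterval, and then one uses that $(u,v)\mapsto|v-u|^{1-1/(q\alpha)}\,|x|^{1/\alpha}_{W^{\alpha,q};[u,v]}$ is a control, which is exactly what your H\"older-plus-superadditivity step proves. The detour through an isometric embedding into $\ell^\infty$ is harmless but unnecessary, since the GRR argument uses only the triangle inequality and continuity and is stated directly for metric-space-valued paths in that reference.
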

\begin{proof}
    For a complete proof of the theorem, see \cite[Corollary A.3]{Friz_Victoir_2010}.
\end{proof}
\medskip

\begin{theorem}[L\'{e}pingle's inequality]\label{thm:Lepingle}
    For every $q\in[1,\infty)$, there exists a constant $C_q<\infty$ such that, for every $p>2$, and every martingale $M:[0,\infty)\rightarrow\mathbb{R}^d$, we have
    \begin{equation*}
        \mathbb{E}\left[\sup_D\left(\sum_{i=0}^{n-1} \Big\lVert M_{t_{i+1}}-M_{t_i}\Big\rVert^p_{\mathbb{R}^d}\right)^{q/p}\right] \leq C_q\frac{p}{p-2}\mathbb{E}\left[\lVert M\rVert^q_{\infty}\right]
    \end{equation*}
    where $D$ is a partition of $[0,\infty)$.
\end{theorem}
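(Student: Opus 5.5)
We would argue pathwise first and then take expectations, reducing the $p$-variation to counts of oscillations of the martingale at dyadic scales. For a path $x$ and $\delta>0$, let $\tau_0^\delta=0$ and $\tau_{j}^\delta:=\inf\{t>\tau^\delta_{j-1}: \lVert x_t-x_{\tau^\delta_{j-1}}\rVert_{\mathbb{R}^d}\ge \delta\}$. Let $N(\delta)$ be the number of finite $\tau^\delta_j$.

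\textbf{Step 1 (deterministic lemma).} We would show that for $p>2$ there is a constant $c$, independent of $p$ (possibly up to a factor $2^p$ absorbed by normalisation), such that
\begin{equation*}
\sup_D\sum_{i}\lVert x_{t_{i+1}}-x_{t_i}\rVert^p_{\mathbb{R}^d}\le c^p\sum_{k\ge k_0}2^{-kp}N(2^{-k}),
\end{equation*}
where $2^{-k_0}\simeq 2\lVert x\rVert_\infty$. The sum starts at $k_0$ because increments never exceed $2\lVert x\rVert_\infty$. For this we would classify each partition increment by its dyadic size $2^{-k}$ and observe the following: if an interval $[t_i,t_{i+1}]$ carries an increment $\ge 2^{-k}$, then at least one $\tau^{2^{-k-2}}_j$ lies in $[t_i,t_{i+1}]$. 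Hence the number of intervals of size class $k$ is at most $N(2^{-k-2})+1$. Summing $2^{-kp}$ over classes gives the bound, and the additive "$+1$" terms form a geometric series controlled by $\lVert x\rVert_\infty^p$.

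\textbf{Step 2 (martingale estimate of $N(\delta)$).} For $M$ a martingale, the $\tau^\delta_j$ are stopping times, and $\delta^2 N(\delta)\le \sum_j\lVert M_{\tau_j^\delta}-M_{\tau^\delta_{j-1}}\rVert^2$ (modulo jumps overshooting, which only helps). Optional stopping and orthogonality of martingale increments give
\begin{equation*}
\mathbb{E}\Big[\sum_j\lVert M_{\tau_j}-M_{\tau_{j-1}}\rVert^2\Big]\le \mathbb{E}[\lVert M_\infty\rVert^2]\le \mathbb{E}[\lVert M\rVert_\infty^2].
\end{equation*}
Plugging this into Step 1 and weighting by $2^{-k(p-2)}$, the sum over $k\ge k_0$ is $\sum_k 2^{-k(p-2)}\cdot 2^{-2k}N(2^{-k})$. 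Since $\sum_{k\ge k_0}2^{-(k-k_0)(p-2)}\le C/(p-2)$ and $2^{-k_0(p-2)}\lesssim\lVert M\rVert_\infty^{p-2}$, we expect, after a careful localisation in which $k_0$ is random,
\begin{equation*}
\mathbb{E}\big[V_p(M)^p\,\lVert M\rVert_\infty^{2-p}\big]\lesssim \tfrac{p}{p-2}\,\mathbb{E}\lVert M\rVert^2_\infty .
\end{equation*}
Combining this with Hölder yields the case $q=2$. To handle the random truncation $k_0$ rigorously, we would stop $M$ at the first time $\lVert M\rVert$ exceeds $2^{-k_0}$ and sum over $k_0$.

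\textbf{Step 3 (general $q$).} To pass from $q=2$ to all $q\in[1,\infty)$, we would use a good-$\lambda$ (or Lenglart-domination) argument. Conditional on $\mathcal{F}_\tau$, with $\tau$ the first time $V_p$ or $\lVert M\rVert_\infty$ exceeds $\lambda$, the process $M_{\tau+\cdot}-M_\tau$ is again a martingale. Step 2 applied to it gives $\mathbb{P}(V_p>\beta\lambda,\lVert M\rVert_\infty\le\epsilon\lambda)\le C\frac{p}{p-2}\epsilon^2\beta^{-2}\mathbb{P}(V_p>\lambda)$. Integrating this against $q\lambda^{q-1}d\lambda$ gives the claim with $C_q$ depending only on $q$.

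We expect the main obstacle to be obtaining exactly the factor $p/(p-2)$ with a constant uniform in $p$. This requires the dyadic bookkeeping in Step 1 to lose only constants independent of $p$, and the random cutoff $k_0$ to be controlled without moment losses. If the good-$\lambda$ step proves messy, we would first try restricting to continuous martingales, which suffices for this paper, and use the Dambis--Dubins--Schwarz time change. This reduces the claim to Brownian motion, where the $p$-variation bound follows by scaling plus Step 2.
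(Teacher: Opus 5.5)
The paper gives no proof of this statement; it only cites L\'{e}pingle. Your jump-counting argument would therefore be a self-contained alternative, and Step 1 is sound (up to a harmless factor $2$ in the interval count). The genuine gap is in Step 3.

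A good-$\lambda$ inequality of the form $\mathbb{P}(V_p>\beta\lambda,\ \lVert M\rVert_\infty\le\epsilon\lambda)\le A\epsilon^2(\beta-1)^{-2}\,\mathbb{P}(V_p>\lambda)$ integrates to $\mathbb{E}[V_p^q]\le\beta^q\epsilon^{-q}\bigl(1-A\epsilon^2\beta^q(\beta-1)^{-2}\bigr)^{-1}\mathbb{E}\lVert M\rVert_\infty^q$. This closes only if $\epsilon^2<(\beta-1)^2/(A\beta^q)$, so the resulting constant is at least $c_qA^{q/2}$ and never of order $A$.

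With your $A\simeq\frac{p}{p-2}$ you obtain $C_q(\frac{p}{p-2})^{q/2}$. For each fixed $q>2$ this exceeds $C_q\frac{p}{p-2}$ by the factor $(\frac{p}{p-2})^{q/2-1}\to\infty$ as $p\downarrow2$. So your claim that integrating gives the bound ``with $C_q$ depending only on $q$'' is wrong for $q>2$.

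You can do better by keeping the exponent produced by your H\"{o}lder step, $\mathbb{E}[V_p^2]\le C^2(\frac{p}{p-2})^{2/p}\mathbb{E}\lVert M\rVert_\infty^2$. This improves the good-$\lambda$ output to $C_q(\frac{p}{p-2})^{q/p}$, which is bounded by $C_q\frac{p}{p-2}$ whenever $q\le p$. That covers the only case the paper actually uses ($q=p$, with $p$ fixed). For fixed $q>2$ and $p\in(2,q)$, however, nothing in your plan gives the first power of $\frac{p}{p-2}$ in the $q$-th-moment form stated here. What your argument can deliver for all $q$ (for continuous $M$) is the norm version $\lVert V_p\rVert_{L^q}\le C_q\frac{p}{p-2}\,\bigl\lVert\,\lVert M\rVert_\infty\bigr\rVert_{L^q}$.

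Two further points need repair.

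The random cutoff in Step 2 cannot be handled by stopping once per $k_0$ and adding the resulting bounds. Those bounds are of size $\mathbb{E}\lVert M^{\sigma}-M_0\rVert^2$ at every level, so they do not sum over $k_0$. Organise the sum by windows instead. Take $M_0=0$ and $\sigma_m:=\inf\{t:\lVert M_t\rVert\ge2^m\}$ for $m\in\mathbb{Z}$. Then:
\begin{itemize}
\item every greedy jump in a nonempty window $[\sigma_m,\sigma_{m+1})$ has size $<2^{m+2}$ and carries weight $\lVert M\rVert_\infty^{2-p}\le2^{m(2-p)}$, so summing over the scales $2^{-k}<2^{m+2}$ costs at most $C^p\frac{p}{p-2}$;
\item for the non-initial jumps, optional stopping and orthogonality across windows give $\sum_m\mathbb{E}\lVert M_{\sigma_{m+1}}-M_{\sigma_m}\rVert^2\le\mathbb{E}\lVert M_\infty\rVert^2$;
\item the first jump of each window and your $+1$ terms contribute only $O(C^p\lVert M\rVert_\infty^2)$.
\end{itemize}

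In Step 3, the restarted martingale, stopped when it leaves the ball of radius $2\epsilon\lambda$, may overshoot by an arbitrarily large jump. The conditional $L^2$ bound is therefore unavailable for c\`{a}dl\`{a}g $M$ without a Davis-type decomposition. Restricting to continuous martingales suffices for the paper, but not for the statement as written.

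The Dambis--Dubins--Schwarz fallback does not avoid this work. It leaves Brownian motion run up to the stopping time $\langle M\rangle_\infty$, which scaling does not handle. For $d>1$ it does not produce a single time-changed Brownian motion.
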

\begin{proof}
    For a complete proof of the theorem, see \cite{Lepingle}.
\end{proof}
\medskip

\begin{theorem}[Young--L\'{o}eve's inequality]\label{thm:Young_Loeve}
    Given $x\in C^{\textup{$p$-var}}([0,T],\mathbb{R}^d)$ and $y\in C^{\textup{$q$-var}}([0,T];L(\mathbb{R}^d,\mathbb{R}^e))$, where $L(\mathbb{R}^d,\mathbb{R}^e)$ denotes the space of linear functional from $\mathbb{R}^d$ to $\mathbb{R}^e$, there exists a unique (indefinite) Young integral of $y$ against $x$ denoted by $\int_0^\cdot ydx$. Moreover, the indefinite Young integral has finite $p$-variation and
    \begin{equation*}
        \Bigg|\int_0^\cdot ydx\Bigg|_{p-\textup{var};[s,t]} \leq C|x|_{p-\textup{var};[s,t]}\left(|y|_{p-\textup{var};[s,t]} + |y_0|\right),
    \end{equation*}
    where $C=C(p,q)$.
\end{theorem}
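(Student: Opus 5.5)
The plan is to prove existence first for smooth (or piecewise linear) $x$, then pass to the general case by approximation; uniqueness comes from the estimate itself. Throughout I take $1/p+1/q>1$, which the statement implicitly needs and which is the standing Young regime.

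\textbf{Smooth case.} First take $x$ of bounded variation, so that $\int_0^\cdot y\,dx$ exists as a Riemann--Stieltjes integral. For each interval $[u,v]$ define the local error
\begin{equation*}
\Xi_{u,v}:=\int_u^v y\,dx - y_u(x_v-x_u) = \int_u^v (y_r-y_u)\,dx_r .
\end{equation*}
Run Young's point-removal argument on a partition $D$ of $[u,v]$. Set $\omega_x(a,b):=|x|^p_{p\text{-var};[a,b]}$ and $\omega_y(a,b):=|y|^q_{q\text{-var};[a,b]}$, both superadditive. Removing a point $t_i$ changes the Riemann sum by $(y_{t_i}-y_{t_{i-1}})(x_{t_{i+1}}-x_{t_i})$. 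By the pigeonhole argument and superadditivity of $\omega:=\omega_x^{1/p}\omega_y^{1/q}$ (Hölder with exponent $1/p+1/q\ge1$), some point satisfies
\begin{equation*}
|(y_{t_i}-y_{t_{i-1}})(x_{t_{i+1}}-x_{t_i})| \le \Big(\tfrac{2}{n-1}\Big)^{1/p+1/q}\,\omega_x(u,v)^{1/p}\omega_y(u,v)^{1/q}.
\end{equation*}
Iterating down to the trivial partition and summing $\sum_n n^{-\theta}$ with $\theta=1/p+1/q>1$ gives the Young--Lo\`eve estimate
\begin{equation*}
|\Xi_{u,v}|\le C(p,q)\,|x|_{p\text{-var};[u,v]}\,|y|_{q\text{-var};[u,v]} .
\end{equation*}

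\textbf{Variation bound.} From this, for $[u,v]\subset[s,t]$,
\begin{equation*}
\Big|\int_u^v y\,dx\Big|\le \big(|y_u| + C|y|_{q\text{-var};[s,t]}\big)\,|x|_{p\text{-var};[u,v]}.
\end{equation*}
Bound $|y_u|\le|y_0|+|y|_{q\text{-var};[0,t]}$. Raising to the $p$-th power, summing over a partition and using superadditivity of $\omega_x$ gives
\begin{equation*}
\Big|\int y\,dx\Big|_{p\text{-var};[s,t]}\le C\,|x|_{p\text{-var};[s,t]}\big(|y|_{q\text{-var}}+|y_0|\big).
\end{equation*}
This is the statement's inequality, with the $y$-variation understood in the $q$-variation seminorm (the statement writes $p$; over $[s,t]$ the $|y_0|$ term may need to be $|y_s|$ or a whole-interval variation).

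\textbf{General case and uniqueness.} For general $x\in C^{p\text{-var}}$, this is the main obstacle: bounded-variation paths are not dense in the $p$-variation norm. I would instead approximate $x$ by piecewise linear interpolations $x^n$. These satisfy $|x^n|_{p\text{-var}}\le C|x|_{p\text{-var}}$ and converge uniformly, so they converge in $p'$-variation for any $p'>p$. Choose $p'>p$ with $1/p'+1/q>1$, which is possible because the inequality is strict. Applying the estimate to differences shows $\int y\,dx^n$ is Cauchy in $p'$-variation. The limit exists, is independent of the approximating sequence, and coincides with the limit of Riemann sums as the mesh tends to zero, since the local error $\Xi$ is controlled uniformly. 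Uniqueness then follows because any integral satisfying $|\int_u^v y\,dx - y_u x_{u,v}|=o(|v-u|\text{-control})$ is determined by Riemann sums. Finally, the $p$-variation bound passes to the limit by lower semicontinuity of the $p$-variation under pointwise convergence.
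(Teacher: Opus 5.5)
Your argument is correct and is essentially the Friz--Victoir proof that the paper simply cites: Young's point-removal estimate for bounded-variation $x$, followed by extension through piecewise-linear approximations that converge in $p'$-variation with $1/p'+1/q>1$. Your caveats are also justified, since the bound needs $1/p+1/q>1$, the $q$-variation of $y$ rather than its $p$-variation, and $|y_s|$ in place of $|y_0|$ on subintervals with $s>0$. One small repair: run the pigeonhole on the normalized control $\omega_x^{1/(p\theta)}\omega_y^{1/(q\theta)}$ with $\theta=1/p+1/q$, which is superadditive by H\"older because its exponents sum to one; pigeonholing on $\omega_x^{1/p}\omega_y^{1/q}$ itself only gives the non-summable factor $2/(n-1)$ instead of $(2/(n-1))^{\theta}$.
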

\begin{proof}
    For a complete proof of the theorem, see  \cite[Proposition 6.8]{Friz_Victoir_2010}
\end{proof}

We now move the the proof of the aforementioned results.

\subsection{Proof of Proposition \ref{prop:a_priori_estimate}}\label{appendix:prop:a_priori_estimate}
Let us recall that
\begin{equation*}
    \augpr{X}_{s,t}:=\left(1,\underbrace{\begin{bmatrix}
        t-s\\X_t-X_s
    \end{bmatrix}}_{\augpr{X}^{(1)}_{s,t}},\underbrace{\begin{bmatrix}
        \frac{1}{2}(t-s)^2 & \int_s^t(u-s)dX^\top_u\\
        \int_s^t(X_u-X_s)du & \int_s^t (X_u-X_s)\circ dX^\top_u
    \end{bmatrix}}_{\augpr{X}^{(2)}_{s,t}}\right)
\end{equation*}
and
\begin{equation*}
    \begin{aligned}
        \mathbb{E}\left[\lVert \augpr{X}\rVert_{\textup{$p$-var};[0,T]}\right] &= \mathbb{E}\left[\max_{m\in\{1,2\}}\sup_D\left(\sum_{i=0}^{n-1}\lVert\augpr{X}^{(m)}_{t_i,t_{i+1}}\rVert_{\mathbb{R}^{(d+1)\otimes m}}^{p/m}\right)^{m/p}\right]\\
        &\leq \sum_{m=1,2} \mathbb{E}\left[\sup_D\left(\sum_{i=0}^{n-1}\lVert\augpr{X}^{(m)}_{t_i,t_{i+1}}\rVert_{\mathbb{R}^{(d+1)\otimes m}}^{p/m}\right)^{m/p}\right],
    \end{aligned}
\end{equation*}
where $D=D_{[0,T]}$ is a partition of $[0,T]$.

As the first level $m=1$ regards,
\begin{align}
        &\mathbb{E}\left[\sup_{D}\sum_{i=0}^{n-1}\big\lVert\augpr{X}^{(1)}_{t_i,t_{i+1}}\big\rVert_{\mathbb{R}^{d+1}}^p\right] =\mathbb{E}\left[\sup_{D}\sum_{i=0}^{n-1}\Big(|t_{i+1}-t_i| + \big\lVert X_{t_{i+1}} - X_{t_i}\big\rVert_{\mathbb{R}^d}\Big)^p\right]\notag\\[0.5em]
        &\le 
        \begin{aligned}[t]
            &(3T)^{p-1} + 3^{p-1}\mathbb{E}\Bigg[\sup_{D}\sum_{i=0}^{n-1} 
                \Bigg\lVert \int_{t_i}^{t_{i+1}} b\left(s,X_s,\Big\langle\ell,\mathbb{E}\left[S(\augpr{X})_s\right]\Big\rangle\right)ds\Bigg\rVert_{\mathbb{R}^d}^p\Bigg] \\
            &+ 3^{p-1}\mathbb{E}\Bigg[\sup_{D}\sum_{i=0}^{n-1} 
                \Bigg\lVert \int_{t_i}^{t_{i+1}}\sigma\left(s,X_s,\Big\langle\ell,\mathbb{E}\left[S(\augpr{X})_s\right]\Big\rangle\right)dW_s\Bigg\rVert_{\mathbb{R}^d}^p\Bigg]
        \end{aligned} \notag\\[0.5em]
        &\le 
        \begin{aligned}[t]
            &(3T)^{p-1} + (3T)^{p-1}\mathbb{E}\Bigg[\int_0^T 
                \Big\lVert b\left(s,X_s,\Big\langle\ell,\mathbb{E}\left[S(\augpr{X})_s\right]\Big\rangle\right)\Big\rVert_{\mathbb{R}^d}^p ds\Bigg] \\
            &+ 3^{p-1} \frac{p}{p-2}C_p\mathbb{E}\Bigg[\sup_{0\leq t\leq T}\Bigg\lVert \int_0^t 
                \sigma\left(s,X_s,\Big\langle\ell,\mathbb{E}\left[S(\augpr{X})_s\right]\Big\rangle\right) dW_s\Bigg\rVert_{\mathbb{R}^d}^p\Bigg],
        \end{aligned}\label{eq:proof_bound_p_var_eq1}
\end{align}
where the first inequality is due to Jensen's inequality, while the second one is because of is because of Jensen's inequality in the first term and L\'{e}pingle's inequality (Theorem \ref{thm:Lepingle}) in the second term.

Then, for the second term in \eqref{eq:proof_bound_p_var_eq1}, by the multidimensional Burkholder-Davis-Gundy inequality (see, e.g., \cite[Theorem 3.28]{KaratzasShreve}) and Jensen's inequality, we get
\begin{equation}\label{eq:proof_bound_p_var_eq2}
    \begin{aligned}
        &\mathbb{E}\Bigg[\sup_{0\leq t\leq T}\Bigg\lVert \int_0^t \sigma\left(s,X_s,\Big\langle\ell,\mathbb{E}\left[S(\augpr{X})_s\right]\Big\rangle\right) dW_s\Bigg\rVert_{\mathbb{R}^d}^p\Bigg]\\
        &\leq \widetilde{C}_p\mathbb{E}\Bigg[\left(\int_0^T \Big\lVert\sigma\left(s,X_s,\Big\langle\ell,\mathbb{E}\left[S(\augpr{X})_s\right]\Big\rangle\right)\Big\rVert_F^{p/2} ds\right)^2\Bigg]\\
        &\leq\widetilde{C}_pT^{p-1}\mathbb{E}\Bigg[\int_0^T \Big\lVert\sigma\left(s,X_s,\Big\langle\ell,\mathbb{E}\left[S(\augpr{X})_s\right]\Big\rangle\right)\Big\rVert_F^p ds\Bigg]
    \end{aligned}
\end{equation}

By substituting \eqref{eq:proof_bound_p_var_eq2} in \eqref{eq:proof_bound_p_var_eq1} and the boundedness assumptions on the coefficients $b$ and $\sigma$ (see Hypothesis \ref{hypothesis_A}), we finally obtain
\begin{equation*}
    \mathbb{E}\left[\sup_{D}\sum_{i=0}^{n-1}\big\lVert\augpr{X}^{(1)}_{t_i,t_{i+1}}\big\rVert_{\mathbb{R}^{d+1}}^p\right]\leq (3T)^{p-1}\left(1+M_b^p+\frac{p}{p-2}C_p\widetilde{C}_pT^{\frac{2-p}{2}}M_\sigma^p\right).
\end{equation*}

As the second level $m=2$ regards,
\begin{align*}
        &\mathbb{E}\left[\sup_{D}\left(\sum_{i=0}^{n-1}\big\lVert\augpr{X}^{(2)}_{t_i,t_{i+1}}\big\rVert_{\mathbb{R}^{(d+1)\otimes 2}}^{p/2}\right)^2\right]\\
        &\begin{aligned}
            \,\leq\,&4^{p/2-2}\sup_D \left(\sum_{i=0}^{n-1}(t_{i+1}-t_i)^p\right)^2&\quad:=A\\
            &+4^{p-2}\mathbb{E}\left[\sup_D \left(\sum_{i=0}^{n-1}\Bigg\lVert\int_{t_i}^{t_{i+1}}(u-t_i)dX^\top_u\Bigg\rVert_{\mathbb{R}^d}^{p/2}\right)^2\right]&\quad:=B\\
            &+4^{p-2}\mathbb{E}\left[\sup_D \left(\sum_{i=0}^{n-1}\Bigg\lVert\int_{t_i}^{t_{i+1}}(X_u-X_{t_i})du\Bigg\rVert_{\mathbb{R}^d}^{p/2}\right)^2\right]&\quad:=C\\
            &+4^{p-2}\mathbb{E}\left[\sup_D \left(\sum_{i=0}^{n-1}\Bigg\lVert\int_{t_i}^{t_{i+1}}(X_u-X_{t_i})\circ dX^\top_u\Bigg\rVert_F^{p/2}\right)^2\right]&\quad:=D.
        \end{aligned}
\end{align*}

The term $A$ can be bounded by $4^{p/2-2}T^{2p-2}$.

For every $u\in[0,T]$, let us define
\begin{equation*}
    b_u := b\left(u,X_u,\Big\langle\ell,\mathbb{E}[S(\augpr{X})_u]\Big\rangle\right);\qquad \sigma_u := \sigma\left(u,X_u,\Big\langle\ell,\mathbb{E}[S(\augpr{X})_u]\Big\rangle\right).
\end{equation*}

As for the term $B$, since the norm is unchanged by transposition, we have that
\begin{equation*}
    \Bigg\lVert\int_{t_i}^{t_{i+1}}(u-t_i)dX^\top_u\Bigg\rVert_{\mathbb{R}^d} = \Bigg\lVert\int_{t_i}^{t_{i+1}}(u-t_i)b_udu + \int_{t_i}^{t_{i+1}}(u-t_i)\sigma_udW_u\Bigg\rVert_{\mathbb{R}^d}.
\end{equation*}
Therefore,
\begin{equation*}
4^{p-2}\mathbb{E}\left[\sup_D \left(\sum_{i=0}^{n-1}\Bigg\lVert\int_{t_i}^{t_{i+1}}(u-t_i)dX^\top_u\Bigg\rVert_{\mathbb{R}^d}^{p/2}\right)^2\right]\leq 4^{p-1/2}\left(B_1 + B_2\right),
\end{equation*}
where
\begin{equation*}
    B_1 := \mathbb{E}\left[\sup_D \left(\sum_{i=0}^{n-1}\Bigg\lVert\int_{t_i}^{t_{i+1}}(u-t_i)b_udu\Bigg\rVert_{\mathbb{R}^d}^{p/2} \right)^2 \right]
\end{equation*}
and
\begin{equation*}
    B_2 := \mathbb{E}\left[\sup_D \left(\sum_{i=0}^{n-1}\Bigg\lVert\int_{t_i}^{t_{i+1}}(u-t_i)\sigma_udW_u\Bigg\rVert_{\mathbb{R}^d}^{p/2}\right)^2\right].
\end{equation*}
Regarding the first term $B_1$,
\begin{equation*}
    B_1 \leq \mathbb{E}\left[\sup_D \left(\sum_{i=0}^{n-1}\int_{t_i}^{t_{i+1}}|u-t_i|^{p/2}\big\lVert b_u\big\rVert_{\mathbb{R}^d}^{p/2}du\right)^2 \right]\leq T^{p+1}M_b^p,
\end{equation*}
where the first inequality is due to Jensen's inequality while the second one is obtained by the boundedness of the drift coefficient $b$ (see Hypothesis \ref{hypothesis_A}).

To estimate the term $B_2$, we first apply the Besov-variation embedding (Theorem \ref{thm:Besov_variation_embedding}) with $q=3/2$ and $1/\alpha=p/2$ with $p\in[5/2,3)$. For some constant $C=C(p,q)>0$,
    \begin{align*}
        B_2 &= \mathbb{E}\left[\sup_D \left(\sum_{i=0}^{n-1}\Bigg\lVert\int_{t_i}^{t_{i+1}}(u-t_i)\sigma_udW_u\Bigg\rVert_{\mathbb{R}^d}^{p/2}\right)^2\right]\\
        &\leq C^pt^{\frac{p(p/2-1)}{q}}\mathbb{E}\left[\left(\int\int_{[0,t]^2}\frac{\Big\lVert\int_u^v (s-u)\sigma_s dW_s\Big\rVert_{\mathbb{R}^d}^q}{|u-v|^{1+2q/p}}dudv\right)^{p-q}\right]\\
        &\leq C^pt^{\frac{p(p/2-1)}{q}}t^{2(p-q-1)}\mathbb{E}\left[\int\int_{[0,t]^2}\frac{\Big\lVert\int_u^v (s-u)\sigma_s dW_s\Big\rVert_{\mathbb{R}^d}^p}{|u-v|^{1+2q/p}}dudv\right],
    \end{align*}
    where the last inequality is due to Jensen's theorem since $p-q \geq 1$.

    Now, let $C(t):=t^{\frac{p(p/2-1)}{q}}t^{2(p-q-1)}$. By Tonelli's theorem,
    \begin{align*}
        B_2 &\leq C^pC(t)\int\int_{[0,t]^2}\frac{\mathbb{E}\left[\Big\lVert\int_u^v (s-u)\sigma_s dW_s\Big\rVert_{\mathbb{R}^d}^p\right]}{|u-v|^{1+2q/p}}dudv\\
        &\leq C^pC(t)\int\int_{[0,t]^2}\frac{\mathbb{E}\left[\left(\int_u^v(s-u)^2\big\lVert \sigma_s\big\rVert_F^2ds\right)^{p/2}\right]}{|u-v|^{1+2q/p}}dudv\\
        &\leq C^pC(t)\int\int_{[0,t]^2}\frac{\mathbb{E}\left[\int_u^v(s-u)^p\big\lVert \sigma_s\big\rVert_F^pds\right]}{|u-v|^{1+2q/p-(p/2-1)}}dudv\\
        &\leq C^pC(t)M_\sigma^p\frac{1}{p+1}\int\int_{[0,t]^2}\frac{|u-v|^{p+1}}{|u-v|^{1+2q/p-(p/2-1)}}dudv<\infty,
    \end{align*}
    where we have applied the usual Burkholder-Davis-Gundy inequality (see, e.g., \cite[Theorem 3.28]{KaratzasShreve}), Jensen's inequality since $p/2\geq 1$, and the boundedness assumption on the diffusion coefficient $\sigma$ (see Hypothesis \ref{hypothesis_A}). The last integral is finite since $1+2q/p-(p/2-1)-(p+1)<0$  for $p\in[5/2,3)$.

    As the term $C$ concerns,
    \begin{equation*}
        C = \mathbb{E}\left[\sup_D \left(\sum_{i=0}^{n-1}\Bigg\lVert\int_{t_i}^{t_{i+1}}\left(\int_{t_i}^ub_rdr + \int_{t_i}^u\sigma_rdW_r\right)du\Bigg\rVert_{\mathbb{R}^d}^{p/2}\right)^2\right] \leq 2^{p-1}\left(C_1 + C_2\right),
    \end{equation*}
    where
    \begin{equation*}
        C_1:=\mathbb{E}\left[\sup_D \left(\sum_{i=0}^{n-1}\Bigg\lVert\int_{t_i}^{t_{i+1}}\int_{t_i}^ub_rdrdu\Bigg\rVert_{\mathbb{R}^d}^{p/2}\right)^2\right]
    \end{equation*}
    and
    \begin{equation*}
        C_2:=\mathbb{E}\left[\sup_D \left(\sum_{i=0}^{n-1}\Bigg\lVert\int_{t_i}^{t_{i+1}}\int_{t_i}^u\sigma_rdW_rdu\Bigg\rVert_{\mathbb{R}^d}^{p/2}\right)^2\right].
    \end{equation*}
    The first term $C_1$ can easily be bounded with a similar argument to the one for $B_1$. That is,
    \begin{equation*}
        \begin{aligned}
            C_1 &=\mathbb{E}\left[\sup_D \left(\sum_{i=0}^{n-1}\Bigg\lVert\int_{t_i}^{t_{i+1}}\int_{t_i}^ub_rdrdu\Bigg\rVert_{\mathbb{R}^d}^{p/2}\right)^2\right]\\
            &\leq T^{2p}\mathbb{E}\left[\int_0^T\int_0^T\big\lVert b_r\big\rVert_{\mathbb{R}^d}^pdrdu\right]\leq T^{2p+2}M_b^p.
        \end{aligned}
    \end{equation*}

    As for the second term $C_2$, we have that
    \begin{align}
        &\mathbb{E}\left[\sup_D \left(\sum_{i=0}^{n-1}\Bigg\lVert\int_{t_i}^{t_{i+1}}\int_{t_i}^u\sigma_rdW_rdu\Bigg\rVert_{\mathbb{R}^d}^{p/2}\right)^2\right]\notag\\
        &\leq T^p\mathbb{E}\left[\sup_D \left(\sum_{i=0}^{n-1}\int_{t_i}^{t_{i+1}}\Bigg\lVert\int_{t_i}^u\sigma_rdW_r\Bigg\rVert_{\mathbb{R}^d}^{p/2}du\right)^2\right]\label{eq1:dim_bound_p_var}\\
        &\leq (2T)^p\mathbb{E}\left[\sup_D \left(\sum_{i=0}^{n-1}\int_{t_i}^{t_{i+1}}\Bigg\lVert\int_0^\cdot \sigma_rdW_r\Bigg\rVert_{\infty,u}^{p/2}du\right)^2\right]\notag\\
        &= (2T)^p\mathbb{E}\left[\left(\int_0^t\Bigg\lVert\int_0^\cdot \sigma_rdW_r\Bigg\rVert_{\infty,u}^{p/2}du\right)^2\right]\notag\\
        &\leq 2^pT^{p+1}\int_0^t\mathbb{E}\left[\Bigg\lVert\int_0^\cdot \sigma_rdW_r\Bigg\rVert_{\infty,u}^p\right]du\label{eq2:dim_bound_p_var}\\
        &\leq 2^pT^{p+2}M_\sigma^p\label{eq3:dim_bound_p_var},
    \end{align}
    where \eqref{eq1:dim_bound_p_var} is due  to Jensen's inequality; \eqref{eq2:dim_bound_p_var} is obtained by Jensen's inequality and Fubini's theorem; finally, we get \eqref{eq3:dim_bound_p_var} by the usual Burkholder-Davis-Gundy inequality and the boundedness of the diffusion coefficient $\sigma$ (see Hypothesis \ref{hypothesis_A}).

    To bound the last term $D$, we first need to rewrite the Stratonovich integral as an It\^{o} integral:
    \begin{align*}
        &\int_{t_i}^{t_{i+1}} (X_u^\ell - X_{t_i}^\ell)\circ dX_u^j = \frac{1}{2} \int_{t_i}^{t_{i+1}} \sum_{k} \sigma_u^{\ell,k}\sigma_u^{j,k} du + \int_{t_i}^{t_{i+1}} (X_u^\ell - X_{t_i}^\ell) dX_u^j\\
        &=\frac{1}{2} \int_{t_i}^{t_{i+1}} \sum_{k} \sigma_u^{\ell,k}\sigma_u^{j,k} du + \int_{t_i}^{t_{i+1}} \left(\int_{t_i}^u b_r^\ell\, dr\right) b_u^j du + \int_{t_i}^{t_{i+1}} \left(\sum_k \int_{t_i}^u \sigma_r^{\ell,k} dW_r^k \right) b_u^j du \\
        &\quad + \sum_{k} \int_{t_i}^{t_{i+1}} \left(\int_{t_i}^u b_r^\ell dr\right) \sigma_u^{j,k} dW_u^k \\
        &\quad + \sum_{k} \int_{t_i}^{t_{i+1}} \left(\sum_h \int_{t_i}^u \sigma_r^{\ell,h} dW_r^h \right)\sigma_u^{j,k} dW_u^k ,
    \end{align*}
    for every $\ell,j\in\{1,\ldots,d\}$.
    
    As for the first three terms, their boundedness can be proved using the same techniques used to bound term $C$. As for the fourth one, due to the boundedness of the drift coefficient $b$, one can use the same arguments used for term $B_2$. We only need to show that also the fifth term is bounded.
    
    By the Besov-variation embedding (Theorem \ref{thm:Besov_variation_embedding}) with $q=3/2$ and $1/\alpha=p/2$ with $p\in[5/2,3)$, for some constant $C=C(p,q)>0$,
    \begin{align*}
        &\mathbb{E}\left[\sup_D \left(\sum_{i=0}^{n-1}\Bigg\lVert\int_{t_i}^{t_{i+1}}\left(\int_{t_i}^u \sigma_rdW_r\right)^\top \sigma_u dW_u\Bigg\rVert_{\mathbb{R}^d}^{p/2}\right)^2\right]\\
        &\leq C^pt^{\frac{p(p/2-1)}{q}}\mathbb{E}\left[\left(\int\int_{[0,t]^2}\frac{\Big\lVert\int_u^v\left(\int_u^s\sigma_rdW_r\right)^\top  \sigma_s dW_s\Big\rVert_{\mathbb{R}^d}^q}{|u-v|^{1+2q/p}}dudv\right)^{p-q}\right]\\
        &\leq C^pC(t)M_\sigma^p\int\int_{[0,t]^2}\frac{\mathbb{E}\left[\int_u^v\Big\lVert\int_u^s\sigma_rdW_r\Big\rVert_{\mathbb{R}^d}^pds\right]}{|u-v|^{1+2q/p-(p/2-1)}}dudv,
    \end{align*}
    where the last inequality is due to the same computations as in term $B_2$ and $C(t):=t^{\frac{p(p/2-1)}{q}}t^{2(p-q-1)}$

    By Fubini's theorem, Burkholder-Davis-Gundy's inequality, Jensen's inequality, and the boundedness assumption on the diffusion coefficient $\sigma$,
    \begin{align*}
        &\mathbb{E}\left[\sup_D \left(\sum_{i=0}^{n-1}\Bigg\lVert\int_{t_i}^{t_{i+1}}\left(\int_{t_i}^u \sigma_rdW_r\right)^\top \sigma_u dW_u\Bigg\rVert_{\mathbb{R}^d}^{p/2}\right)^2\right]\\
        &\leq C^pC(t)M_\sigma^p\int\int_{[0,t]^2}\frac{\int_u^v\mathbb{E}\left[\Big\lVert\int_u^s\sigma_rdW_r\Big\rVert_{\mathbb{R}^d}^p\right]ds}{|u-v|^{1+2q/p-(p/2-1)}}dudv\\
        &\leq C^pC(t)M_\sigma^p\int\int_{[0,t]^2}\frac{\int_u^v\mathbb{E}\left[\left(\int_u^s\lVert\sigma_r\rVert_F^2dr\right)^{p/2}\right]ds}{|u-v|^{1+2q/p-(p/2-1)}}dudv\\
        &\leq C^pC(t)M_\sigma^p\int\int_{[0,t]^2}\frac{\int_u^v\mathbb{E}\left[\int_u^s\lVert\sigma_r\rVert_F^pdr\right]ds}{|u-v|^{1+2q/p-2(p/2-1)}}dudv\\
        &\leq C^pC(t)M_\sigma^{2p}\frac{1}{2}\int\int_{[0,t]^2}\frac{|u-v|^2}{|u-v|^{1+2q/p-2(p/2-1)}}dudv<\infty
    \end{align*}
    where the last integral is finite since $1+2q/p-p<0$ for $p\in[5/2,3)$.

\subsection{Proof of Proposition \ref{prop:p_var_norm_estimate_m=1}}\label{appendix:prop:p_var_norm_estimate_m=1_PROOF}
For any partition $D=D_{[0,t]}$ of $[0,t]$,
    \begin{align*}
        &\left(X^{\mu}_{t_{i+1}}-X^{\mu}_{t_i}\right)-\left(Y^{\nu}_{t_{i+1}}-Y^{\nu}_{t_i}\right)&&\\
        &\begin{aligned}
            = &\int_{t_i}^{t_{i+1}} \left[b\left(s,X^{\mu}_s,\Big\langle\ell,\mathbb{E}_{x\sim\mu}\left[S(x)_s\right]\Big\rangle\right)-b\left(s,Y^{\nu}_s,\Big\langle\ell,\mathbb{E}_{x\sim\nu}\left[S(x)_s\right]\Big\rangle\right)\right]ds\\
            &+ \int_{t_i}^{t_{i+1}} \left[\sigma\left(s,X^{\mu}_s,\Big\langle\ell,\mathbb{E}_{x\sim\mu}\left[S(x)_s\right]\Big\rangle\right)-\sigma\left(s,Y^{\nu}_s,\Big\langle\ell,\mathbb{E}_{x\sim\mu}\left[S(x)_s\right]\Big\rangle\right)\right]dW_s.
        \end{aligned}&&
    \end{align*}
    
    Therefore,
    \begin{align}
        &\mathbb{E}\left[\sup_{D}\sum_{i=0}^{n-1}\big\lVert \augpr{X}^{\mu,(1)}_{t_{i+1},t_i}-\augpr{Y}^{\nu,(1)}_{t_{i+1},t_i}\big\rVert_{\mathbb{R}^d}^p\right] \notag\\[0.5em]
        &\le 
        \begin{aligned}[t]
            &2^{p-1}\; \mathbb{E}\Bigg[\sup_{D}\sum_{i=0}^{n-1} 
                \Bigg\lVert \int_{t_i}^{t_{i+1}} 
                \Big[b\left(s,X^{\mu}_s,\Big\langle\ell,\mathbb{E}_{x\sim\mu}\left[S(x)_s\right]\Big\rangle\right)\\
            &\hspace{5cm} - b\left(s,Y^{\nu}_s,\Big\langle\ell,\mathbb{E}_{x\sim\nu}\left[S(x)_s\right]\Big\rangle\right)\Big] ds\Bigg\rVert_{\mathbb{R}^d}^p\Bigg] \\
            &+ 2^{p-1}\; \mathbb{E}\Bigg[\sup_{D}\sum_{i=0}^{n-1} 
                \Bigg\lVert \int_{t_i}^{t_{i+1}} 
                \Big[\sigma\left(s,X^{\mu}_s,\Big\langle\ell,\mathbb{E}_{x\sim\mu}\left[S(x)_s\right]\Big\rangle\right) \\
            &\hspace{5cm} - \sigma\left(s,Y^{\nu}_s,\Big\langle\ell,\mathbb{E}_{x\sim\nu}\left[S(x)_s\right]\Big\rangle\right)\Big] dW_s\Bigg\rVert_{\mathbb{R}^d}^p\Bigg]
        \end{aligned} \notag\\[0.5em]
        &\le 
        \begin{aligned}[t]
            &2^{p-1}T^{p-1}\; \mathbb{E}\Bigg[\int_0^t 
                \Big\lVert b\left(s,X^{\mu}_s,\Big\langle\ell,\mathbb{E}_{x\sim\mu}\left[S(x)_s\right]\Big\rangle\right) \\
            &\hspace{5cm} - b\left(s,Y^{\nu}_s,\Big\langle\ell,\mathbb{E}_{x\sim\nu}\left[S(x)_s\right]\Big\rangle\right)\Big\rVert_{\mathbb{R}^d}^p ds\Bigg] \\
            &+ 2^{p-1} \frac{p}{p-2}C_p\; \mathbb{E}\Bigg[\sup_{0\leq s\leq t}\Bigg\lVert\int_0^t 
                \Big(\sigma\left(s,X^{\mu}_s,\Big\langle\ell,\mathbb{E}_{x\sim\mu}\left[S(x)_s\right]\Big\rangle\right) \\
            &\hspace{5cm} - \sigma\left(s,Y^{\nu}_s,\Big\langle\ell,\mathbb{E}_{x\sim\nu}\left[S(x)_s\right]\Big\rangle\right)\Big)dW_s \Bigg\rVert_{\mathbb{R}^d}^p\Bigg],
        \end{aligned}\label{eq:terms_to_be_bounded}
    \end{align}
    where the first term in \eqref{eq:terms_to_be_bounded} follows from Jensen’s inequality, while the second term in \eqref{eq:terms_to_be_bounded} is due to L\'{e}pingle's inequality (Theorem \ref{thm:Lepingle}).
    
    As for the first term in \eqref{eq:terms_to_be_bounded}, by the Lipschitz assumption on the coefficient $b$ and the linearity of the expectation,
    \begin{align}
        &\Big\lVert b\left(s,X^{\mu}_s,\Big\langle\ell,\mathbb{E}_{x\sim\mu}\left[S(x)_s\right]\Big\rangle\right)
        -b\left(s,Y^{\nu}_s,\Big\langle\ell,\mathbb{E}_{x\sim\nu}\left[S(x)_s\right]\Big\rangle\right)\Big\rVert^p_{\mathbb{R}^d}\notag\\
        &\leq 2^{p-1}L^p\Big\lVert X_s^{\mu}-Y_s^{\nu}\Big\rVert^p_{\mathbb{R}^d}\notag\\
        &\quad +2^{p-1}L^p\Bigg|\sum_{0\leq |I|\leq N}\alpha_I\Big\langle\varepsilon_I,
        \int_{\widehat{G\Omega}_p(\mathbb{R}^{d+1})\times \widehat{G\Omega}_p(\mathbb{R}^{d+1})}(S(x)_s-S(y)_s)\pi(dx,dy)
        \Big\rangle\Bigg|^p \notag\\
        &\leq 2^{p-1}L^p\Big\lVert \augpr{X}^{\mu}-\augpr{Y}^{\nu}\Big\rVert^p_{\textup{$p$-var};[0,s]}\notag\\
        &\quad +2^{p-1}L^p\alpha_{\infty}^p\Bigg|
        \int_{\widehat{G\Omega}_p(\mathbb{R}^{d+1})\times \widehat{G\Omega}_p(\mathbb{R}^{d+1})}
        \sum_{0\leq |I|\leq N}\Big\langle\varepsilon_I,S(x)_s-S(y)_s\Big\rangle
        \pi(dx,dy)\Bigg|^p,
        \label{eq:first_term_1}
\end{align}
    where $\alpha_{\infty}:=\max_{0\leq |I|\leq N}|\alpha_I|$ and $\pi\in\Pi(\mu,\nu)$ is an arbitrary coupling of $\mu$ and $\nu$.
    
    By the Cauchy-Schwartz inequality,
    \begin{equation}\label{eq:consequence_C_S_ineq}
         |x_1|+|x_2|+\dots+|x_n| \leq \sqrt{n}\left(\sum_{i=1}^n x_i^2\right)^{1/2}
     \end{equation}
     for every $(x_1,\dots,x_n)\in\mathbb{R}^n$ and $n\geq 0$. Let
    \begin{equation*}
        C_1(d,N):= \left(\frac{(d+1)^{N+1}-1}{d}\right)^{1/2}.
    \end{equation*} Then, by \eqref{eq:consequence_C_S_ineq}, \eqref{eq:first_term_1} becomes
    \begin{align*}
        &\Big\lVert b\left(s,X^{\mu}_s,\Big\langle\ell,\mathbb{E}_{x\sim\mu}\left[S(x)_s\right]\Big\rangle\right)-b\left(s,Y^{\nu}_s,\Big\langle\ell,\mathbb{E}_{x\sim\nu}\left[S(y)_s\right]\Big\rangle\right)\Big\rVert^p_{\mathbb{R}^d}&&\\
        &\leq 2^{p-1}L^p\Big\lVert \augpr{X}^{\mu}-\augpr{Y}^{\nu}\Big\rVert^p_{\textup{$p$-var};[0,s]}\\
        &\quad+ 2^{p-1}L^p\alpha_{\infty}^pC_1(d,N)^p\int_{\widehat{G\Omega}_p(\mathbb{R}^{d+1})\times \widehat{G\Omega}_p(\mathbb{R}^{d+1})}\Big\lVert S(x)^{\leq N}_s - S^{\leq N}(y)_s\Big\rVert_{T^N(\mathbb{R}^{d+1})}^p\pi(dx,dy),&&
    \end{align*}
    where
    \begin{equation*}
        \Big\lVert S(x)^{\leq N}_s - S(y)^{\leq N}_s\Big\rVert_{T^N(\mathbb{R}^{d+1})} := \sum_{k=0}^N\big\lVert S(x)^k_s - S(y)^k_s \big\rVert_{\mathbb{R}^{(d+1)\otimes k}}.
    \end{equation*}
    
    By Corollary \ref{corollary:local_Lip},
    \begin{equation*}
        \begin{aligned}
            &\int_{\widehat{G\Omega}_p(\mathbb{R}^{d+1})\times \widehat{G\Omega}_p(\mathbb{R}^{d+1})}\Big\lVert S(x)^{\leq N}_s - S^{\leq N}(y)_s\Big\rVert_{T^N(\mathbb{R}^{d+1})}^p\pi(dx,dy)\\
            &\leq\int_{\widehat{G\Omega}_p(\mathbb{R}^{d+1})\times \widehat{G\Omega}_p(\mathbb{R}^{d+1})}C_2(p,N,\Gamma_s)\lVert x-y \rVert_{\textup{$p$-var};[0,s]}^p\pi(dx,dy),
        \end{aligned}
    \end{equation*}
    where $\Gamma_s:=\max\{\lVert x\rVert_{\textup{$p$-var};[0,s]},\lVert y\rVert_{\textup{$p$-var};[0,s]}\}$.
    
    Let
    \begin{equation*}
        C:=\alpha_{\infty}^pC_1(d,N)^pC_2(p,N,\Gamma_s).
    \end{equation*}
    We have obtained
    \begin{equation}\label{eq:first_continuity_result_eq1}
        \begin{aligned}
            &\Big\lVert b\left(s,X^{\mu}_s,\Big\langle\ell,\mathbb{E}\left[S^{\leq M}(\augpr{X}^{\mu})_s\right]\Big\rangle\right)-b\left(s,Y^{\nu}_s,\Big\langle\ell,\mathbb{E}\left[S^{\leq M}(\augpr{Y}^{\nu})_s\right]\Big\rangle\right)\Big\rVert^p_{\mathbb{R}^d}&&\\
            &\leq2^{p-1}L^p\Big\lVert \augpr{X}^{\mu}-\augpr{Y}^{\nu}\Big\rVert^p_{\textup{$p$-var};[0,s]} + 2^{p-1}L^p\int_{\widehat{G\Omega}_p(\mathbb{R}^{d+1})\times \widehat{G\Omega}_p(\mathbb{R}^{d+1})}C\lVert x-y \rVert_{\textup{$p$-var};[0,s]}^p\pi(dx,dy).
        \end{aligned}
    \end{equation}

    As for the second term in \eqref{eq:terms_to_be_bounded}, 
    \begin{align}
        &\mathbb{E}\Bigg[\sup_{0\leq s\leq t}\Bigg\lVert\int_0^t \Big(\sigma\left(s,X^{\mu}_s,\Big\langle\ell,\mathbb{E}_{x\sim\mu}\left[S(x)_s\right]\Big\rangle\right) - \sigma\left(s,Y^{\nu}_s,\Big\langle\ell,\mathbb{E}_{x\sim\nu}\left[S(x)_s\right]\Big\rangle\right)\Big)dW_s \Bigg\rVert_{\mathbb{R}^d}^p\Bigg]\notag\\
        &\leq\widetilde{C}_p\mathbb{E}\Bigg[\Bigg(\int_0^t \Big\lVert \sigma\left(s,X^{\mu}_s,\Big\langle\ell,\mathbb{E}_{x\sim\mu}\left[S(x)_s\right]\Big\rangle\right) - \sigma\left(s,Y^{\nu}_s,\Big\langle\ell,\mathbb{E}_{x\sim\nu}\left[S(x)_s\right]\Big\rangle\right)\Big\rVert_F^2 ds \Bigg)^{p/2}\Bigg]\label{eq:ineq_well_pos_SDE_2}\\
        &\leq \widetilde{C}_pT^{p-1}\mathbb{E}\Bigg[\int_0^t \Big\lVert \sigma\left(s,X^{\mu}_s,\mathbb{E}_{x\sim\mu}\left[S(x)_s\right]\right)-\sigma\left(s,Y^{\nu}_s,\mathbb{E}_{x\sim\nu}\left[S(x)_s\right]\right)\Big\rVert_F^pds\Bigg],\label{eq:ineq_well_pos_SDE_3}
    \end{align}
    where \eqref{eq:ineq_well_pos_SDE_2} is obtained by the multidimensional Burkholder–Davis–Gundy inequality (see, e.g., \cite[Theorem 3.28]{KaratzasShreve}), while the bound in \eqref{eq:ineq_well_pos_SDE_3} follows from Jensen’s inequality. Then, by doing the same computations as for the first term,
    \begin{equation}\label{eq:first_continuity_result_eq2}
        \begin{aligned}
            &\Big\lVert\sigma\left(s,X^{\mu}_s,\Big\langle\ell,\mathbb{E}\left[S(\augpr{X}^{\mu})_s\right]\Big\rangle\right)-\sigma\left(s,Y^{\nu}_s,\Big\langle\ell,\mathbb{E}\left[S(\augpr{Y}^{\nu})_s\right]\Big\rangle\right)\Big\rVert_{\mathbb{R}^d}^p&&\\
            &\leq2^{p-1}L^p\Big\lVert \augpr{X}^{\mu}-\augpr{Y}^{\nu}\Big\rVert^p_{\textup{$p$-var};[0,s]} + 2^{p-1}L^p\int_{\widehat{G\Omega}_p(\mathbb{R}^{d+1})\times \widehat{G\Omega}_p(\mathbb{R}^{d+1})}C\lVert x-y \rVert_{\textup{$p$-var};[0,s]}^p\pi(dx,dy).
        \end{aligned}
    \end{equation}

    By substituting \eqref{eq:first_continuity_result_eq1} and \eqref{eq:first_continuity_result_eq2} in \eqref{eq:terms_to_be_bounded}, we conclude.

\subsection{Proof of Proposition \ref{prop:p_var_norm_estimate_m=2}}\label{appendix:prop:p_var_norm_estimate_m=2_PROOF}
    Let $Z:=X^{\mu}-Y^{\nu}$. Then, for any partition $D=D_{[0,t]}$ of $[0,t]$,
    \begin{equation*}
        \augpr{X}^{\mu,(2)}_{t_{i+1},t_i}-\augpr{Y}^{\nu,(2)}_{t_{i+1},t_i} 
    \end{equation*}
    is the $\mathbb{R}^d$-valued $(d+1)\times (d+1)$ matrix
    \begin{equation*}
        \begin{bmatrix}
        0 & \int_{t_i}^{t_{i+1}} (u-t_i)\circ dZ_u^\top\\
        \int_{t_i}^{t_{i+1}} (Z_u-Z_{t_i})du & \int_{t_i}^{t_{i+1}} (Z_u-Z_{t_i})\circ dX^{\mu\top}_u + \int_{t_i}^{t_{i+1}} (Y^{\nu}_u-Y^{\nu}_{t_i})\circ dZ_u^\top
        \end{bmatrix}.
    \end{equation*}
    In fact, by adding and subtracting
    \begin{equation*}
        \int_{t_i}^{t_{i+1}}(Y^{\nu}_u-Y^{\nu}_{t_i})\circ dX^{\mu\top}_u,
    \end{equation*}
    one obtains that
    \begin{equation*}
        \begin{aligned}
            &\int_{t_i}^{t_{i+1}}(X^{\mu}_u-X^{\mu}_{t_i})\circ dX^{\mu\top}_u - \int_{t_i}^{t_{i+1}}(Y^{\nu}_u-Y^{\nu}_{t_i})\circ dY^{\nu\top}_u\\
            &=\int_{t_i}^{t_{i+1}} (Z_u-Z_{t_i})\circ dX^{\mu\top}_u + \int_{t_i}^{t_{i+1}} (Y^{\nu}_u-Y^{\nu}_{t_i})\circ dZ_u^\top.
        \end{aligned}
    \end{equation*}
    
    Therefore,
    \begin{align*}
        &\mathbb{E}\left[\sup_D\left(\sum_{i=0}^{n-1} \Big\lVert\augpr{X}^{\mu,(2)}_{t_{i+1},t_i}-\augpr{Y}^{\nu,(2)}_{t_{i+1},t_i}\Big\rVert^{p/2}\right)^2\right]\\
        &\begin{aligned}
            \,\leq\,&4^{p-2}\mathbb{E}\left[\sup_D \left(\sum_{i=0}^{n-1}\Bigg\lVert\int_{t_i}^{t_{i+1}}(u-t_i)\circ dZ_u^\top\Bigg\rVert_{\mathbb{R}^d}^{p/2}\right)^2\right]&\quad:=A\\
            &+4^{p-2}\mathbb{E}\left[\sup_D \left(\sum_{i=0}^{n-1}\Bigg\lVert\int_{t_i}^{t_{i+1}}(Z_u-Z_{t_i})du\Bigg\rVert_{\mathbb{R}^d}^{p/2}\right)^2\right]&\quad:=B\\
            &+4^{p-2}\mathbb{E}\left[\sup_D \left(\sum_{i=0}^{n-1}\Bigg\lVert\int_{t_i}^{t_{i+1}}(Z_u-Z_{t_i})\circ dX^{\mu\top}_u\Bigg\rVert_{\mathbb{R}^d}^{p/2}\right)^2\right]&\quad:=C\\
            &+4^{p-2}\mathbb{E}\left[\sup_D \left(\sum_{i=0}^{n-1}\Bigg\lVert\int_{t_i}^{t_{i+1}}(Y^{\nu}_u-Y^{\nu}_{t_i})\circ dZ_u^\top\Bigg\rVert_{\mathbb{R}^d}^{p/2}\right)^2\right]&\quad:=D.
        \end{aligned}
    \end{align*}

    As the first term $A$ concerns, for almost every $\omega\in\Omega$, by Young--L\'{o}eve's inequality (Theorem \ref{thm:Young_Loeve}),
    \begin{equation*}
        \begin{aligned}
            &\sup_D \left(\sum_{i=0}^{n-1}\Bigg\lVert\int_{t_i}^{t_{i+1}}(u-t_i)\circ dZ_u(\omega)^\top\Bigg\rVert_{\mathbb{R}^d}^{p/2}\right)^2\\
            &\leq\frac{2^{1/p}}{2^{1/p}-1}\sup_D \left(\sum_{i=0}^{n-1} \lVert \textup{Id}-t_i\rVert_{1-var;[t_i,t_{i+1}]}^{p/2}\lVert Z(\omega)\rVert_{\textup{$p$-var};[t_i,t_{i+1}]}^{p/2}\right)^2\\
            &\leq\frac{2^{1/p}}{2^{1/p}-1}\lVert Z(\omega)\rVert_{\textup{$p$-var};[0,t]}^p\sup_D \left(\sum_{i=0}^{n-1} \lVert \textup{Id}-t_i\rVert_{1-var;[t_i,t_{i+1}]}^{p/2}\right)^2\\
            &\leq\frac{2^{1/p}}{2^{1/p}-1}T^p\lVert Z(\omega)\rVert_{\textup{$p$-var};[0,t]}^p,
        \end{aligned}
    \end{equation*}
    where $\textup{Id}$ is the identity function.

    By Proposition \ref{prop:p_var_norm_estimate_m=1},
    \begin{equation*}
        \begin{aligned}
            A&\leq\frac{2^{1/p}}{2^{1/p}-1}T^p\mathbb{E}\left[\Big\lVert X^\mu-Y^\nu \Big\rVert_{\textup{$p$-var};[0,t]}^p\right]\\
            &\leq \widetilde{C}\int_0^t\left(\mathbb{E}\left[\big\lVert\augpr{X}^{\mu}-\augpr{Y}^{\nu}\big\rVert_{\textup{$p$-var};[0,s]}^p\right] + \int_{\widehat{G\Omega}_p(\mathbb{R}^{d+1})\times \widehat{G\Omega}_p(\mathbb{R}^{d+1})}C_s\lVert x-y \rVert_{\textup{$p$-var};[0,s]}^p\pi(dx,dy)\right)ds,
        \end{aligned}
    \end{equation*}
    where $\widetilde{C}=\widetilde{C}(p,T,L)$ and $C_s$ as in Proposition \ref{prop:p_var_norm_estimate_m=1}, and $\pi\in\Pi(\mu,\nu)$ is a coupling of $\mu$ and $\nu$. Here, the $p$-variation norm in the first line is the one over $C([0,T];\mathbb{R}^d)$, while the one in the second line is the inhomogeneous $p$-variation norm over the space of geometric $p$-rough paths.
    
    Regarding the second term $B$, by Jensen's inequality and Fubini's theorem,
    \begin{equation*}
        \begin{aligned}
            B&\leq \mathbb{E}\left[\left(\sup_D \sum_{i=0}^{n-1} \int_{t_i}^{t_{i+1}} \big\lVert Z_u-Z_{t_i}\big\rVert_{\mathbb{R}^d}^{p/2}du\right)^2\right]\\
            &\leq \mathbb{E}\left[\left(\sup_D \sum_{i=0}^{n-1} \int_{t_i}^{t_{i+1}} \left(2^{p/2-1}\big\lVert X_u^\mu-Y_u^\nu\big\rVert_{\mathbb{R}^d}^{p/2}+2^{p/2-1}\big\lVert X_{t_i}^\mu-Y_{t_i}^\nu\big\rVert_{\mathbb{R}^d}^{p/2}\right)du\right)^2\right]\\
            &\leq 2^{p-2}\mathbb{E}\left[\left(\int_0^t \big\lVert\augpr{X}^{\mu} - \augpr{Y}^{\nu}\big\rVert_{\textup{$p$-var};[0,s]}^{p/2}ds\right)^2\right] \leq 2^{p-2}\int_0^t \mathbb{E}\left[\big\lVert\augpr{X}^{\mu} - \augpr{Y}^{\nu}\big\rVert_{\textup{$p$-var};[0,s]}^p\right]ds.
        \end{aligned}
    \end{equation*}

    As for the third term $C$, we start by rewriting the Stratonovich integral as an It\^{o} integral:
    \begin{align}
        &\mathbb{E}\left[\sup_D \left(\sum_{i=0}^{n-1}\Bigg\lVert\int_{t_i}^{t_{i+1}}(Z_u-Z_{t_i})\circ dX^{\mu\top}_u\Bigg\rVert_{\mathbb{R}^d}^{p/2}\right)^2\right]\\\notag
        &\begin{aligned}
            =&2^{p-1}\mathbb{E}\left[\sup_D \left(\sum_{i=0}^{n-1}\Bigg\lVert\int_{t_i}^{t_{i+1}}(Z_u-Z_{t_i})dX^{\mu\top}_u\Bigg\rVert_{\mathbb{R}^d}^{p/2}\right)^2\right]\\
            &+\frac{2^{p-1}}{2}\mathbb{E}\left[\sup_D \left(\sum_{i=0}^{n-1}\Bigg\lVert\int_{t_i}^{t_{i+1}}\left(\sigma_u^\mu-\sigma_u^\nu\right)\sigma_u^{\mu\top} du\Bigg\rVert_{\mathbb{R}^d}^{p/2}\right)^2\right]=:C_1 + C_2,
        \end{aligned}\label{eq:terms_2_bound_term_c}
    \end{align}
    where
    \begin{equation*}
        \sigma_u^\mu := \sigma\left(u,X_u^\mu,\Big\langle\ell,\mathbb{E}_{x\sim\mu}[S(x)_u]\Big\rangle\right);\qquad \sigma_u^\nu := \sigma\left(u,Y_u^\nu,\Big\langle\ell,\mathbb{E}_{y\sim\nu}[S(y)_u]\Big\rangle\right),
    \end{equation*}
    for every $u\in[0,T]$; similarly, we also define $b_u^\mu$ and $b_u^\nu$.
    
     Then, since the norm is unchanged by transposition,
     \begin{align*}
         C_1 \leq &2^{2p-2}\mathbb{E}\left[\sup_D \left(\sum_{i=0}^{n-1}\Bigg\lVert\int_{t_i}^{t_{i+1}}(Z_u-Z_{t_i})^\top b_u^\mu du\Bigg\rVert_{\mathbb{R}^d}^{p/2}\right)^2\right]\\
         &+ 2^{2p-2}\mathbb{E}\left[\sup_D \left(\sum_{i=0}^{n-1}\Bigg\lVert\int_{t_i}^{t_{i+1}}(Z_u-Z_{t_i})^\top \sigma_u^\mu dW_u\Bigg\rVert_{\mathbb{R}^d}^{p/2}\right)^2\right]=:C_{1,1} + C_{1,2}
     \end{align*}
     and by Jensen's inequality, the boundedness assumption on the coefficient $b$ (see Hypothesis \ref{hypothesis_A}), and Tonelli's theorem,
    \begin{align*}
        C_{1,1} \leq &2^{2p-2}\mathbb{E}\left[\sup_D \left(\sum_{i=0}^{n-1}\int_{t_i}^{t_{i+1}}\big\lVert Z_u-Z_{t_i}\big\rVert_{\mathbb{R}^d}^{p/2}\lVert b_u^\mu\rVert_{\mathbb{R}^d}^{p/2}du\right)^2\right]\\
        &\leq 2^{2p-2}M_b^p\mathbb{E}\left[\sup_D \left(\sum_{i=0}^{n-1}\int_{t_i}^{t_{i+1}}\big\lVert Z_u-Z_{t_i}\big\rVert_{\mathbb{R}^d}^{p/2}du\right)^2\right]\\
        &\leq 2^{p-2}M_b^p\int_0^t \mathbb{E}\left[\big\lVert\augpr{X}^{\mu} - \augpr{Y}^{\nu}\big\rVert_{\textup{$p$-var};[0,s]}^p\right]ds,
    \end{align*}
    where the last inequality is obtained by the same computations done for the term $B$.

    Similarly, one obtains,
    \begin{align*}
        C_2 \leq &\frac{2^{2p-2}}{2}M_\sigma^p\int_0^t \mathbb{E}\left[\Big\lVert \sigma_u^\mu-\sigma_u^\nu\Big\rVert_F^p\right]du\\
        &\leq \widetilde{C}\int_0^t\left(\mathbb{E}\left[\big\lVert\augpr{X}^{\mu}-\augpr{Y}^{\nu}\big\rVert_{\textup{$p$-var};[0,s]}^p\right] + \mathbb{E}_{(x,y)\sim(\mu,\nu)}\left[C_s\lVert x-y \rVert_{\textup{$p$-var};[0,s]}^p\right]\right)ds,
    \end{align*}
    where the last inequality is due to Proposition \ref{prop:p_var_norm_estimate_m=1}, with $\widetilde{C}=\widetilde{C}(p,T,L,M_\sigma)$ and $C_s$ as in the proposition.

    Lastly,
    \begin{align*}
        C_{1,2} \leq &2^{p-1}\mathbb{E}\left[\sup_D \left(\sum_{i=0}^{n-1}\Bigg\lVert\int_{t_i}^{t_{i+1}}\left(\int_{t_i}^u \left(b_r^\mu-b_r^\nu\right)dr\right)^\top \sigma_u^\mu dW_u\Bigg\rVert_{\mathbb{R}^d}^{p/2}\right)^2\right]\\
        &+2^{p-1}\mathbb{E}\left[\sup_D \left(\sum_{i=0}^{n-1}\Bigg\lVert\int_{t_i}^{t_{i+1}}\left(\int_{t_i}^u \left(\sigma_r^\mu-\sigma_r^\nu\right)dW_r\right)^\top \sigma_u^\mu dW_u\Bigg\rVert_{\mathbb{R}^d}^{p/2}\right)^2\right] =: C_{1,2,1} + C_{1,2,2}.
    \end{align*}

    We carry out the explicit computations only for the term $C_{1,2,2}$, since those for $C_{1,2,1}$ are analogous. Moreover, the same arguments used for $C_{1,2,2}$ also yield the desired estimate for the term $D$.

    Let $q=3/2$. By the Besov-variation embedding (Theorem \ref{thm:Besov_variation_embedding}), for some constant $C=C(p,q)>0$
    \begin{align*}
        C_{1,2,2} &= 2^{p-1}\mathbb{E}\left[\sup_D \left(\sum_{i=0}^{n-1}\Bigg\lVert\int_{t_i}^{t_{i+1}}\left(\int_{t_i}^u \left(\sigma_r^\mu-\sigma_r^\nu\right)dW_r\right)^\top \sigma_u^\mu dW_u\Bigg\rVert_{\mathbb{R}^d}^{p/2}\right)^2\right]\\
        &\leq C^pt^{\frac{p(p/2-1)}{q}}\mathbb{E}\left[\left(\int\int_{[0,t]^2}\frac{\Big\lVert\int_u^v\left(\int_u^s\left(\sigma_r^\mu-\sigma_r^\nu\right)dW_r\right)^\top  \sigma_s^\mu dW_s\Big\rVert_{\mathbb{R}^d}^q}{|u-v|^{1+2q/p}}dudv\right)^{p-q}\right]\\
        &\leq C^pt^{\frac{p(p/2-1)}{q}}t^{2(p-q-1)}\mathbb{E}\left[\int\int_{[0,t]^2}\frac{\Big\lVert\int_u^v\left(\int_u^s\left(\sigma_r^\mu-\sigma_r^\nu\right)dW_r\right)^\top \sigma_s^\mu dW_s\Big\rVert_{\mathbb{R}^d}^p}{|u-v|^{1+2q/p}}dudv\right],
    \end{align*}
    where the last inequality is due to Jensen's theorem since $p-q \geq 1$.

    Now, let $C(t):=t^{\frac{p(p/2-1)}{q}}t^{2(p-q-1)}$. By Tonelli's theorem,
    \begin{align*}
        C_{1,2,2} &\leq C^pC(t)\int\int_{[0,t]^2}\frac{\mathbb{E}\left[\Big\lVert\int_u^v\left(\int_u^s\left(\sigma_r^\mu-\sigma_r^\nu\right)dW_r\right)^\top \sigma_s^\mu dW_s\Big\rVert_{\mathbb{R}^d}^p\right]}{|u-v|^{1+2q/p}}dudv\\
        &\leq C^pC(t)\int\int_{[0,t]^2}\frac{\mathbb{E}\left[\left(\int_u^v\Big\lVert\int_u^s\left(\sigma_r^\mu-\sigma_r^\nu\right)dW_r\Big\rVert_{\mathbb{R}^d}^2\big\lVert \sigma_s^\mu\big\rVert_F^2ds\right)^{p/2}\right]}{|u-v|^{1+2q/p}}dudv\\
        &\leq C^pC(t)\int\int_{[0,t]^2}\frac{\mathbb{E}\left[\int_u^v\Big\lVert\int_u^s\left(\sigma_r^\mu-\sigma_r^\nu\right)dW_r\Big\rVert_{\mathbb{R}^d}^p\big\lVert \sigma_s^\mu\big\rVert_F^pds\right]}{|u-v|^{1+2q/p-(p/2-1)}}dudv\\
        &\leq C^pC(t)M_\sigma^p\int\int_{[0,t]^2}\frac{\mathbb{E}\left[\int_u^v\Big\lVert\int_u^s\left(\sigma_r^\mu-\sigma_r^\nu\right)dW_r\Big\rVert_{\mathbb{R}^d}^pds\right]}{|u-v|^{1+2q/p-(p/2-1)}}dudv,
    \end{align*}
    where we have applied the usual Burkholder-Davis-Gundy inequality (see, e.g., \cite[Theorem 3.28]{KaratzasShreve}), Jensen's inequality since $p/2\geq 1$, and the boundedness assumption on the diffusion coefficient $\sigma$ (see Hypothesis \ref{hypothesis_A}).

    Finally, by applying Fubini's theorem and the Burkholder-Davis-Gundy and Jensen inequalities,
    \begin{align*}
        C_{1,2,2} &\leq C^pC(t)M_\sigma^p\int\int_{[0,t]^2}\frac{\int_u^v\mathbb{E}\left[\Big\lVert\int_u^s\left(\sigma_r^\mu-\sigma_r^\nu\right)dW_r\Big\rVert_{\mathbb{R}^d}^p\right]ds}{|u-v|^{1+2q/p-(p/2-1)}}dudv\\
        &\leq C^pC(t)M_\sigma^p\int\int_{[0,t]^2}\frac{\int_u^v\mathbb{E}\left[\left(\int_u^s\lVert\sigma_r^\mu-\sigma_r^\nu\rVert_F^2dr\right)^{p/2}\right]ds}{|u-v|^{1+2q/p-(p/2-1)}}dudv\\
        &\leq C^pC(t)M_\sigma^p\int\int_{[0,t]^2}\frac{\int_u^v\mathbb{E}\left[\int_u^s\lVert\sigma_r^\mu-\sigma_r^\nu\rVert_F^pdr\right]ds}{|u-v|^{1+2q/p-2(p/2-1)}}dudv\\
        &\leq C^pC(t)M_\sigma^p\left(\int\int_{[0,t]^2}\frac{1}{|u-v|^{1+2q/p-2(p/2-1)-1}}dudv\right)\int_0^t\mathbb{E}\left[\lVert\sigma_r^\mu-\sigma_r^\nu\rVert_F^pdr\right]ds\\
        &\leq C^pC(t)M_\sigma^p\int_0^t\mathbb{E}\left[\lVert\sigma_r^\mu-\sigma_r^\nu\rVert_F^pdr\right]ds,
    \end{align*}
    where the last inequality is due to the fact that the integral is convergent since $2q/p-p+2\in(0,1)$  for $p\in[5/2,3)$.

    Thus, by Proposition \ref{prop:p_var_norm_estimate_m=1},
    \begin{align*}
        C_{1,2,2}\leq \widetilde{C}\int_0^t\left(\mathbb{E}\left[\big\lVert\augpr{X}^{\mu}-\augpr{Y}^{\nu}\big\rVert_{\textup{$p$-var};[0,s]}^p\right] + \int_{\widehat{G\Omega}_p(\mathbb{R}^{d+1})\times \widehat{G\Omega}_p(\mathbb{R}^{d+1})}C_s\lVert x-y \rVert_{\textup{$p$-var};[0,s]}^p\pi(dx,dy)\right)ds,
    \end{align*}
    where $\widetilde{C}=\widetilde{C}(p,T,L,M_\sigma)$ and $C_s$ as in the proposition.
    
\printbibliography

\end{document}